\begin{document}

\title{An odd analog of Plamenevskaya's invariant of transverse knots}
\author{Gabriel Montes de Oca}

\maketitle

\abstract{Plamenevskaya defined an invariant of transverse links as a distinguished class in the even Khovanov homology of a link.
We define an analog of Plamenevskaya's invariant in the odd Khovanov homology of Ozsv\'ath, Rasmussen, and Szab\'o. 
We show that the analog is also an invariant of transverse links and has similar properties to Plamenevskaya's invariant. 
We also show that the analog invariant can be identified with an equivalent invariant in the reduced odd Khovanov homology. 
We demonstrate computations of the invariant on various transverse knot pairs with the same topological knot type and self-linking number.}
\tableofcontents

\newpage

\section{Introduction}
\label{sec-int}
\subsection{Background}
A \emph{transverse link} is a link that is everywhere transverse to the standard contact structure $(\R^3,\xi_{std})$. 
There are two ``classical invariants'' for transverse links: the smooth link type and the self-linking number (see \cite{Etn3} for the definition of contact structures and other basic facts about transverse links). 
In the early 2000s, Etnyre-Honda~\cite[Theorem~1.7]{EH} and Birman-Menasco~\cite[Theorem~3]{BM2} found the first examples of pairs of transverse links that had the same classical invariants but were not isotopic as transverse links. 
Topological links that have distinct transverse representatives with the same classical invariants are called \emph{transversely non-simple}.

Every link can be represented as the closure of some braid, and every braid can be associated to a transverse link;
 conversely, every transverse link is transversely isotopic to a closed braid~\cite[Th\'eor\`em~8]{Ben}. 
The Markov theorem gives conditions under which two braids have closures that are isotopic as links~\cite[theorem on p.~75]{Mar}. 
There is a transverse version of the Markov theorem that gives conditions under which two braids have closures that are isotopic as transverse links~\cite[Theorem~1]{Wri}~\cite[theorem on p.~1]{OS}.

At the turn of the millennium, Khovanov defined a categorification of the Jones polynomial, \emph{Khovanov homology}~\cite{Kho}. 
Khovanov homology $Kh(L)$ is the homology of a bigraded chain complex that is computed from the hypercube of resolutions of the link diagram. 
The graded Euler characteristic is a normalization of the Jones polynomial of the link. 

There are a number of transverse invariants coming from modern techniques in knot theory, particularly gauge theory and holomorphic curves. 
These include invariants in Heegaard Floer homology~\cite{LOSSz,OSzT,Kan}, monopole Floer homology~\cite{BS}, and knot contact homology~\cite{Ng}. 

In~\cite{Pla} Plamenevskaya identified a distinguished element in Khovanov homology that is an invariant of transverse links. 
It is not known to be effective. 
That is, there is no known pair of transverse links that have the same classical invariants but are distinguished by Plamenevskaya's invariant.
Lipshitz, Ng, and Sarkar further studied and refined this invariant and showed it is the same for pairs of transverse links related by negative flypes and pairs related by $SZ$ moves~\cite[Theorem~4.15]{LNS}. 

With $\mathbb Z/2$-coefficients, Ozsv\'ath and Szab\'o constructed a spectral sequence from Khovanov homology to the Heegaard Floer homology group $\widehat{\mathit{HF}}$ of the branched double cover~\cite[Theorem~1.1]{OSz}.
In attempting to lift this spectral sequence to $\Z$ coefficients, Ozsv\'ath, Ras\-mussen, and Szab\'o defined a variant of Khovanov homology, called odd Khovanov homology, and conjectured there is a spectral sequence from it to $\widehat{HF}(\Sigma(K))$ with $\mathbb Z$-coefficients~\cite[Conjecture~1.9]{ORSz}. 
In spite of a similar definition, odd Khovanov homology $Kh'(L)$ has different properties from even Khovanov homology.
The unreduced and reduced odd Khovanov homologies have a simpler relationship than in the even case. 
Shumakovitch showed there is more torsion in reduced odd Khovanov homology than reduced even Khovanov homology for small knots~\cite[Subsection~3.A]{Shu1}.

Although odd Khovanov homology is a strong invariant, it does not distinguish knots related by a Conway mutation~\cite[Theorem~1]{Blo}.
There is no known analogue of the Lee spectral sequence, an object defined in~\cite{Lee2,Lee1} (cf.~\cite{Ras}). 
This last observation is notable given that there is a close relationship between Pla\-me\-nev\-ska\-ya's invariant and the Lee spectral sequence~\cite[Theorem~4.2]{LNS}.

\subsection{Notation}
\label{ssec-not}
In general, we will denote links with some variation of $L$ and their diagrams as $D$. 
We will refer to the set of crossings in a diagram as $\mathcal X$, with the number of crossings as $n$. 
When we need to count the positive and negative crossings, we will use $n_+$ and $n_-$ respectively.

As this paper defines an invariant of transverse links, we will be considering diagrams that are manifestly the closure of some braid $B$. 
The number of strands in such a braid will be denoted by $b$. 
The self-linking number of the tranverse link obtained from the closure of $B$ is
	\[sl(L) = -b+n_+-n_-. \]

The even and odd Khovanov homologies are each constructed from the cube of resolutions of a diagram. 
Each resolution $\alpha:\mathcal X\too\{0,1\}$ corresponds to a resolution diagram (often metonymically referred to simply as the resolution) from a tranverse link diagram by replacing each crossing with the 0- or 1-smoothing as shown in Figure~\ref{fig-sm1}, depending on $\alpha(x)$. 
\begin{figure}
\caption{0- and 1-smoothings of a crossing.}
\label{fig-sm1}
	\begin{center}\begin{tikzpicture}
		\nxp{0}{2}
		\hsp{-1.5}{0}
		\vsp{1.5}{0}
		
		\draw [->]  (0,2.2) -- (-.5,1.4);
		\draw [->] (1,2.2) -- (1.5,1.4);
		
		\draw (-1,0) node {0-smoothing};
		\draw (2,0) node {1-smoothing};
	\end{tikzpicture}\end{center}
\end{figure}
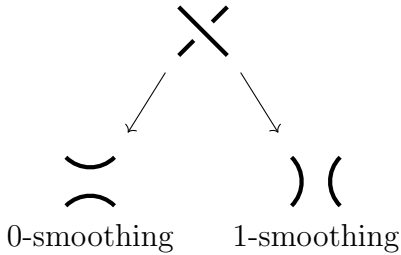
Given a resolution $\alpha$, its length is defined
	\[|\alpha|=\sum_{x\in\mathcal X} \alpha(x).\]

The circles $\{a_1, \cdots, a_k\}$ that make up the resolution diagram of $\alpha$ in the odd Khovanov homology, are associated to the set of generators $\{v_1,\dots,v_k\}$, with $a_i$ corresponding to $v_i$ for each $i$. 
The resolution $\alpha$ thus corresponds to the exterior algebra on all such generators
	\[\Lambda^*V_\alpha = \Lambda^*\langle v_1,\dots,v_k\rangle.\]

Together with a sign assignment (see Section~\ref{ssec-sa}), the edges of the cube are associated to linear maps. 
If the resolutions at each end of the edge can be connected by the cobordism in which the circles $a_0$ and $a_1$ in the source merge into $a$ as the target, then the map on the exterior algebra is given by the generators associated to $a_0$ and $a_1$ both mapping to the generator associated to $a$. 
On the other hand, if the resolutions on either side of an edge can be connected by the cobordism in which the circle $a$ in the source splits into $a_0$ and $a_1$ in the target, then if $v_0$ and $v_1$ are the generators associated to $a_0$ and $a_1$ respectively, the map is defined (up to a sign) by
	\[\omega \mapsto (v_0-v_1)\wedge \omega.\]

For a link $L$ with diagram $D$, both the cube and its associated chain complex will be denoted by $C(D)$.
The homological grading for elements corresponding to a resolution $\alpha$ is defined
	\[r = n_- -|\alpha|,\]
and the quantum grading of an element in $\Lambda^kV_\alpha$ is
	\[Q = (\dim V_\alpha) - 2k + n_+ - 2n_- + |\alpha|.\]
	
The even Khovanov homology is denoted $Kh(L)$, and the odd Khovanov homology is denoted $Kh'(L)$.

\subsection{The Result}
\label{sec-result}
In this paper, for a transverse link $L$ with diagram $D$, we construct an analog of Plamenevskaya's invariant for transverse links in odd Khovanov homology, $\psi(D)$. 
\begin{thm}
\label{thm-result}
The element $\psi(D)\in Kh'(L)$ is a transverse link invariant, which is well defined up to a sign.
\end{thm}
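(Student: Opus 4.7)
The plan is to define $\psi(D)$ at the chain level as a distinguished element of the exterior algebra at the oriented resolution of $D$, show it is a cycle, and verify that its homology class is preserved by the transverse Markov moves (braid conjugation and positive stabilization), appealing to the transverse Markov theorem to conclude.

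Concretely, if $D$ is the closure of a braid $B$ on $b$ strands, let $\alpha_{\mathrm{or}}$ be the resolution that takes the $0$-smoothing at every positive crossing and the $1$-smoothing at every negative crossing; the resolution diagram consists of the $b$ Seifert circles of $D$ with associated generators $v_1, \dots, v_b$. Set
\[
\psi(D) = v_1 \wedge v_2 \wedge \cdots \wedge v_b \in \Lambda^b V_{\alpha_{\mathrm{or}}} \subset C(D).
\]
Using the grading formulas of Section~\ref{ssec-not}, one checks that $r = 0$ and $Q = -b + n_+ - n_- = sl(L)$, so $\psi(D)$ lives in the expected bigrading. To see it is a cycle, note that the outgoing edges from $\alpha_{\mathrm{or}}$ in the cube all come from changing a $0$-smoothing at a positive crossing to a $1$-smoothing, and in a braid closure this always merges two Seifert circles. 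The odd merge map identifies the two generators of the merging circles with a common generator $v$; applied to the top form $v_1 \wedge \cdots \wedge v_b$, the image contains $v \wedge v$ and vanishes. Hence $d\psi(D) = 0$.

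For invariance, the transverse Markov theorem reduces the question to braid conjugation and positive stabilization. For conjugation it suffices to check the local Reidemeister II and III moves within the braid; the oriented resolution is identified outside a local region, and the standard chain-level maps on odd Khovanov complexes send the distinguished top form to the distinguished top form up to sign. For positive stabilization $B \leadsto B\sigma_b$, the oriented resolution of $\widehat{B\sigma_b}$ is the disjoint union of the oriented resolution of $D$ with one extra circle, so at the chain level $\psi(\widehat{B\sigma_b}) = \psi(D) \wedge v_{b+1}$, and I would trace this identification through an explicit chain-homotopy equivalence between $C(\widehat{B\sigma_b})$ and $C(D)$ built from the stabilization complex.

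The main obstacle will be the positive stabilization step, because it changes the number of strands and so requires an explicit homotopy equivalence between complexes of different sizes, and one must confirm that the sign assignment on the stabilized cube restricts compatibly to the one on $C(D)$. The sign ambiguity in the statement is genuine: in odd Khovanov homology the sign assignment used to define the differential is unique only up to a gauge transformation, so any chain-level choice of $\psi(D)$ is itself only canonical up to an overall sign, and this ambiguity is not removed by the invariance argument.
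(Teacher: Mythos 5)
Your proposal is correct and follows essentially the same route as the paper: define $\psi(D)$ as the top wedge at the oriented (braid) resolution, observe that every outgoing edge is a merge killing the top form, compute the bigrading $(0, sl(L))$, and reduce invariance via the transverse Markov theorem to chain-level quasi-isomorphisms for positive stabilization and the braid-relation Reidemeister moves (the paper carries out the stabilization step exactly as you sketch, via the mapping cone of the merge $C(D_0)\to C(D_1)$ and the identification of $\psi(\hat D)$ with $(v_0-v_1)\wedge\psi(D)$). The only substantive part you leave as an assertion is that the R2 and R3 quasi-isomorphisms actually carry the distinguished top form to the distinguished top form, which is where the paper does its real work with acyclic-subcomplex cancellations.
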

This theorem is restated more precisely as Theorem~\ref{thm-inv} in Section~\ref{sec-inv}. 
In Proposition~\ref{prop-negstab}, we show that the odd Plamenevskaya invariant of the negative stabilization of another transverse link is zero. 
In Proposition~\ref{prop-posx}, we show that
if $L'$ can be obtained from $L$ by replacing a single positive crossing with a 0-smoothing, then the invariants of each are related by the associated homomorphism $Kh'(L)\too Kh'(L')$. 
Unlike even Khovanov homology~\cite[Theorem~1]{Kho2}, it is not known if odd Khovanov homology is natural (cf.~\cite{Put}), so this identification is weaker than the analogous identification in the even case~\cite[Theorem~4]{Pla}.

\subsection{Organization}
\label{sec-organization}
The odd analog of Plamenevskaya's invariant is defined in Section~\ref{sec-inv}, and, using the transverse Markov theorem, we prove it to be invariant in Theorem~\ref{thm-inv}. 

In Section~\ref{sec-rh}, we investigate the reduced odd Khovanov homology. 
There, we define a reduced version of the invariant, and in Proposition~\ref{prop-opiinrokh}, prove that the unreduced invariant can be identified with the reduced invariant via the isomorphism between full odd Khovanov homology and reduced odd Khovanov homology. 

In Section~\ref{sec-prop}, we investigate the odd invariant's properties analogous to those of the even Plamenevskaya invariant. 
In Section~\ref{sec-comp}, we discuss the author's computer program for studying the invariant and observations made using it.

\section{The Invariant}
\label{sec-inv}

\subsection{Definition of the Invariant}
\label{ssec-definv}
\begin{defn}
\label{defn-opi}
Let $L$ be a transverse link and $D$ be a braid diagram of $L$. 
In the resolution cube associated to $D$, let 
 $\alpha'$ be the unique resolution where the braid representation is separated into $b$ parallel bands. 
This resolution corresponds to the vector space $\Lambda^*V_{\alpha'}$, where $V_{\alpha'}=\langle v_1,\dots,v_b\rangle$.
We define $\psi(D)$ first on the level of the chain complex to be a generator of $\Lambda^bV_{\alpha'}$,
	\[\psi(D) := v_1\wedge \cdots \wedge v_b.\]
\end{defn}

From the braid representation, it is easy to see that this resolution is the one in which there is a 0-smoothing for every positive crossing and a 1-smoothing for every negative crossing.

\subsection{The Invariant as Seen in Homology}
\label{ssec-invhom}
\begin{prop} 
\label{prop-tpsic}
$\psi(D)$ is a cycle.
\end{prop}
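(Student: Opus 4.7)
The plan is to verify directly that the differential annihilates $\psi(D)$. Since $\psi(D)$ is supported at the single vertex $\alpha'$ of the cube, only those components of $d$ corresponding to edges out of $\alpha'$ can contribute, so it suffices to check that each such edge map sends $v_1\wedge\cdots\wedge v_b$ to $0$.

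First I would identify the outgoing edges. In $\alpha'$, every positive crossing carries a $0$-smoothing and every negative crossing carries a $1$-smoothing, as noted in the paragraph following Definition~\ref{defn-opi}. An edge in the cube increases $|\alpha|$ by one, i.e. flips a single $0$ to a $1$, so the outgoing edges from $\alpha'$ are in natural bijection with the positive crossings of $D$.

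Next I would describe each outgoing edge geometrically. Because both smoothings appearing in $\alpha'$ are the oriented ones for their respective crossing signs, the braid representation splits into $b$ disjoint parallel strands, whose closure gives the $b$ circles $a_1,\dots,a_b$ associated to $v_1,\dots,v_b$. Flipping a single positive crossing between strand positions $i$ and $i{+}1$ from its $0$-smoothing to its $1$-smoothing replaces two parallel vertical arcs by two horizontal arcs that splice the strand at position $i$ to the strand at position $i{+}1$. Since these are two distinct circles of $\alpha'$, the resulting cobordism is a genuine merge $a_i\cup a_{i+1}\to a$.

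Now I apply the exterior-algebra rule for merges stated in Section~\ref{ssec-not}: both $v_i$ and $v_{i+1}$ map to the generator $v$ associated to $a$, while the remaining $v_l$ map to themselves. Evaluating this on $\psi(D)=v_1\wedge\cdots\wedge v_b$ produces a wedge product in which $v$ occurs twice, hence equals $0$. This is true for every outgoing edge, so up to the signs imposed by any sign assignment, each summand of $d(\psi(D))$ vanishes. Therefore $d(\psi(D))=0$.

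The only point requiring care is the geometric claim at the second step, that the two strands meeting at a positive crossing always belong to distinct circles of $\alpha'$; but this is immediate from the braid picture, since in $\alpha'$ the circles correspond bijectively to the $b$ strand positions of the braid and those positions are distinct at every crossing. Consequently there is no real obstacle, and signs need not be tracked.
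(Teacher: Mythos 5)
Your proposal is correct and follows essentially the same argument as the paper: the outgoing edges from $\alpha'$ correspond to the positive crossings, each is a merge of two distinct parallel circles, and the induced map kills $v_1\wedge\cdots\wedge v_b$ because two wedge factors are identified (the case with no positive crossings being handled implicitly by the empty sum, where the paper treats it as a separate trivial case). No gaps.
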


\begin{proof}
There are two cases. 
If the resolution corresponding to the vertex in which $\psi(D)$ resides is one with a 1-smoothing at every crossing (that is, every crossing in $D$ is a negative crossing), then the next vector space in the chain complex is the 0 vector space, so the differential from the vector space containing $\psi(D)$ is the zero map. 
Thus every element of $\Lambda^*V_{\alpha'}$ is trivially a cycle, $\psi(D)$ included.

In the second case, there is at least one 0-smoothing in the corresponding resolution. 
The differential that maps out of $\Lambda^*V_{\alpha'}$ in this case is a sum of maps, each corresponding to a merge cobordism, one for each 0-smoothing. 
This is because at each  0-smoothing, the parallel strings on either side of the smoothing merge into a single circle after becoming the 1-smoothing. See Figure~\ref{fig-cyclemerge}. 
We will show that for any one of these maps $\psi(D)$ is mapped to 0, thus $d(\psi(D))=0$ and is therefore a cycle. 
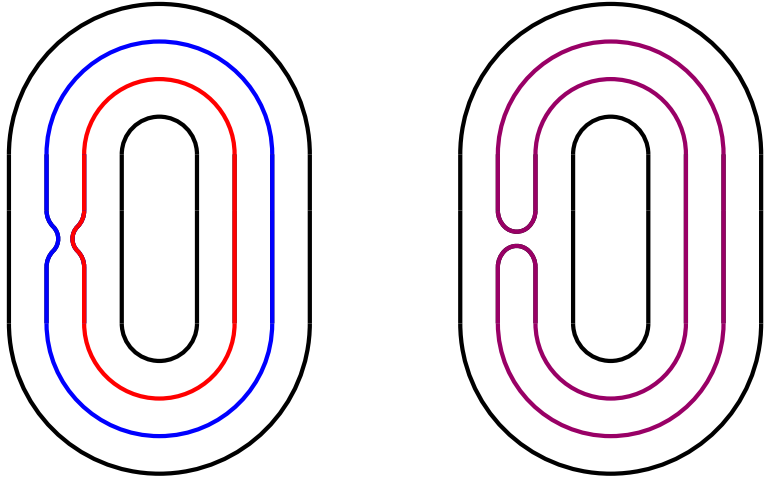
\begin{figure}
	\begin{center}\begin{tikzpicture}[scale=0.5]
		\topcap{0}{0}{4}

		\topcap{3}{0}{1}
		\threeres{4}{0}{0}{5}{2}{5}{0}{0}{0}
		\threebraid{4}{5}{0}{5}{5}{5}
		\botcap{0}{2}{4}
		\botcap{3}{2}{1}
		\topcap{12}{0}{4}
		\topcap{15}{0}{1}
		\threeres{4}{12}{0}{5}{2}{5}{1}{1}{1}
		\threebraid{4}{17}{0}{5}{5}{5}
		\botcap{12}{2}{4}
		\botcap{15}{2}{1}
		
	\color{blue}
		\topcap{1}{0}{3}
		\botcap{1}{2}{3}
		\draw[ultra thick] (7,1.5) -- (7,-3);
		\threeres{2}{1}{0}{5}{1}{5}{0}{0}{0}
	
	\color{red}
		\topcap{2}{0}{2}
		\botcap{2}{2}{2}
		\draw[ultra thick] (6,1.5) -- (6,-3) (2,1.5)--(2,0) (2,-1.5)--(2,-3);
		\lc{1}{1}
		
	\color{blue!40!red}
		\topcap{13}{0}{3}
		\topcap{14}{0}{2}
		\botcap{13}{2}{3}
		\botcap{14}{2}{2}
		\draw[ultra thick] (18,1.5)--(18,-3) (19,1.5)--(19,-3);
		\threeres{2}{13}{0}{5}{1}{5}{1}{1}{1}
	
	\end{tikzpicture}\end{center}
\caption[Edge maps out of the invariant's resolution.]{\figbld{Edge maps out of the invariant's resolution.} The diagrams of the 0-smoothing (left) and 1-smoothing (right) of a single positive braid crossing. The two circles (blue and red) in the 0-smoothing on the left merge into a single circle (purple) on the right.} 
\label{fig-cyclemerge}
\end{figure}

If the merging components in the diagram are  $a_{i_0}$ and $a_{i_1}$, corresponding to generators  $v_{i_0}$ and $v_{i_1}$ resp., the merge map on the vector spaces is induced by the quotient map,
	\[q: V_{\alpha'} \too V_{\alpha'}/(v_{i_0}-v_{i_1}).\]
The image of the quotient map $V_{\alpha'}/(v_{i_0}-v_{i_1})$ is isomorphic to the vector space generated by the elements corresponding to the components in the target resolution $\langle v_1,\dots,(v_{i_0}\sim v_{i_1}),\dots,v_b\rangle$ since there will be one fewer component there after the merge. 
Since $q(v_{i_0})=q(v_{i_1})$, then $\tilde{q}(\psi(D))=0$ under the induced map because two of the factors in the wedge product map to the same vector.
\end{proof}

Thus, $\psi(D)$ defines an element of homology. We will abuse our notation and refer to both the cycle and its class in homology by $\psi(D)$.

\begin{prop} 
\label{prop-opigrading}
The distinguished element $\psi(D)$ is in $Kh'_{0,sl(L)}(L)$. 
\end{prop}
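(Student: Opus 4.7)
The plan is a direct computation from the grading formulas stated at the end of Section~\ref{ssec-not}, using the explicit description of the resolution $\alpha'$ given in Definition~\ref{defn-opi} and the observation immediately following it.

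First I would record the three numerical ingredients that feed into the grading formulas. Since $\alpha'$ is the resolution with a $0$-smoothing at every positive crossing and a $1$-smoothing at every negative crossing, we have $|\alpha'| = n_-$. Since $\alpha'$ separates the braid into $b$ parallel bands, the resolution diagram consists of exactly $b$ circles, so $\dim V_{\alpha'} = b$. Finally, by definition $\psi(D) = v_1 \wedge \cdots \wedge v_b \in \Lambda^b V_{\alpha'}$, so the relevant exterior power index is $k = b$.

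Plugging into the homological grading formula,
\[
r = n_- - |\alpha'| = n_- - n_- = 0,
\]
and plugging into the quantum grading formula,
\[
Q = (\dim V_{\alpha'}) - 2k + n_+ - 2n_- + |\alpha'| = b - 2b + n_+ - 2n_- + n_- = -b + n_+ - n_- = sl(L).
\]
This verifies $\psi(D) \in Kh'_{0, sl(L)}(L)$.

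There is no real obstacle here; the proposition is a bookkeeping check. The only thing worth being careful about is the identification $|\alpha'| = n_-$, which uses the convention fixed in Section~\ref{ssec-not} and Figure~\ref{fig-sm1} together with the observation after Definition~\ref{defn-opi} that $\alpha'$ resolves positive crossings as $0$ and negative crossings as $1$.
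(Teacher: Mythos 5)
Your proposal is correct and follows essentially the same route as the paper: identify $|\alpha'|=n_-$, $\dim V_{\alpha'}=b$, and $k=b$, then substitute into the two grading formulas to get $r=0$ and $Q=sl(L)$. Nothing is missing.
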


\begin{proof} 
Let $\psi(D)$ be the distinguished element of the homology defined above corresponding to the diagram $D$. 
In the chain complex, $\psi(D)$ is an element of $\Lambda^*V_{\alpha'}$ where $V_{\alpha'}=\langle v_1,\dots,v_b\rangle$ is the vector space with a generator associated to each of the $b$ parallel strands. 
In particular, the $Q$ grading on $\Lambda^bV_{\alpha'}$ is given by
	\begin{align*}
	Q	&=(\dim V_{\alpha'})- 2b+n_+-2n_- + |\alpha'|,
	\intertext{and because of the choice of $\alpha'$, the number of 1-smoothings is the number of negative crossings in $D$. Thus, we have $|\alpha'|=n_-$, so}
	Q	&=-b+n_+-n_-\\
		&=sl(L).
	\end{align*}
	
For each resolution $\alpha$, the homological grading $r$ on $V_\alpha$ is defined such that $|\alpha|=r+n_-$. 
Thus, since $|\alpha'|=n_-$, then $r=0$. Therefore, $\psi(D)\in Kh'_{0,sl(L)}(L)$.
\end{proof}

\subsection{Invariance}
\label{ssec-inv}
In this subsection, we will show that $\psi(D)$ is an invariant of transverse links. 
To do this, we rely upon the transverse Markov theorem\----Theorem~\ref{thm-tmt}, below\----and show that $\psi(D)$ is invariant under braid group relations and positive braid stabilizations.

\begin{thm}[Transverse Markov Theorem, Wrinkle and Orevkov-Shevchushin, {\cite[Theorem 1]{Wri}},{\cite[theorem on p.1]{OS}}]
\label{thm-tmt}

Two tranverse links $L$ and $L'$ are transversely isotopic if and only if they are related by a finite number of the following moves:
\begin{itemize}
\item braid group relations,
\item braid conjugations,
\item positive braid stabilizations and destabilizations.
\end{itemize}
\end{thm}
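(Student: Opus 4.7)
My plan is to sketch the essential ideas rather than reproduce the full arguments, since complete proofs appear in~\cite{Wri,OS} and are substantially beyond the scope of this paper. The theorem splits into an easy direction and a hard direction, and almost all of the work is in the latter.

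The easy direction---that each of the three listed moves preserves transverse isotopy class---is verified by exhibiting an explicit transverse isotopy for each move. Braid group relations and braid conjugations are realized by isotopies in the complement of the braid axis that manifestly preserve transversality to $\xi_{std}$. A positive braid stabilization corresponds to inserting a local positive half-twist near the braid axis, which one checks by a direct model calculation to give a transverse isotopy of the closure. The self-linking formula $sl(L) = -b + n_+ - n_-$ provides a useful sanity check: positive stabilization increases both $b$ and $n_+$ by one, leaving $sl(L)$ unchanged, whereas negative stabilization increases $b$ and $n_-$ by one and so decreases $sl(L)$ by two, immediately disqualifying it as a transverse move.

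For the hard direction I would follow Wrinkle's approach, which adapts the braid foliation machinery of Birman and Menasco to the transverse setting. Given a transverse isotopy between two closed braids, one places the associated one-parameter family in general position with respect to the braid axis, producing a finite sequence of exceptional times at which the combinatorial braid type changes. Each local transition then has to be identified with one of the three listed moves. The new input beyond the classical Markov theorem is an orientation analysis that rules out negative (de)stabilizations outright and shows that any transition which in the smooth Markov theorem would be a negative (de)stabilization can instead be decomposed into braid relations, conjugations, and positive (de)stabilizations. Orevkov and Shevchushin provide an independent proof via pseudoholomorphic curves and the characterization of transverse links by quasipositivity; this route trades combinatorics for complex-analytic machinery.

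The main obstacle in either approach is precisely this elimination of negative stabilizations from the list of generating moves. It is the substantive content of the theorem and the reason it is a strictly stronger statement than the classical Markov theorem, which had been known for decades beforehand. Since we use Theorem~\ref{thm-tmt} purely as a black box to organize the invariance proof of $\psi(D)$ in Theorem~\ref{thm-inv}, we take it on the authority of~\cite{Wri,OS} and proceed to verify invariance of $\psi(D)$ under each of the three generating moves in turn.
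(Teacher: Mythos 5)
The paper does not prove this theorem; it is quoted directly from Wrinkle and Orevkov--Shevchushin and used as a black box, which is exactly what your proposal does. Your sketch of the two known proof strategies is accurate and consistent with the literature, so this matches the paper's treatment.
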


It is trivially the case that $\psi(D)$ is unchanged by braid conjugations since two braids related by braid conjugation have closures whose diagrams are isotopic in the plane. 
Thus, their chain complexes are also canonically isomorphic, and that isomorphism clearly identifies $\psi(D)$. 
Positive braid stabilization and destabilization  corresponds to a Reidemeister move of type I that introduces or removes a single positive crossing. 
We refer to such a move as a transverse type I Reidemeister move, and we prove invariance of $\psi(D)$ under this move in Proposition~\ref{prop-r1}. 
The braid group moves can be generated from Reidemeister moves of types II and III. 
We show the invariance of $\psi(D)$ under these moves in Propositions~\ref{prop-r2}~and~\ref{prop-r3}.

\begin{prop} 
\label{prop-r1}
Let $D$ and $\hat D$ be two braid diagrams for a transverse link $L$ related by a single transverse type I Reidemeister move (R1), where $\hat D$ is the diagram with the additional positive crossing. 
There is a quasi-isomorphism $\rho:C(D)\too C(\hat D)$ such that
$\rho(\psi(D))=\psi(\hat D)$.
\end{prop}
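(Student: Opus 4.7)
The plan is to decompose $C(\hat D)$ according to the resolution at the new positive crossing, apply a cancellation argument, and then recognize the invariant in the surviving summand. Label the new crossing $x_0$ and split $C(\hat D) = C_0 \oplus C_1$ as bigraded vector spaces, where $C_i$ is spanned by resolutions taking value $i$ on $x_0$. Because the differential only raises resolution values, $C_0$ is a subcomplex of $C(\hat D)$ and $C_1$ a quotient, with the only off-diagonal component of the differential (up to the edge sign at $x_0$) being a merge map $m\colon C_0\to C_1$. I will identify $C_1$ with $C(D)$, since 1-smoothing $x_0$ undoes the stabilization, and $C_0$ with $C(D)\otimes\Lambda^*\langle v_0\rangle$, where $v_0$ records the extra circle produced by 0-smoothing $x_0$. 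Under these identifications, at every resolution $\alpha$ of $D$, the map $m$ fixes each generator of $V_\alpha$ and sends $v_0\mapsto u_\alpha$, with $u_\alpha\in V_\alpha$ the generator for the circle of the $\alpha$-resolution of $D$ containing the stabilized strand endpoint.

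I will then define $\rho\colon C(D)\to C(\hat D)$ at each resolution $\alpha$ by
\[
\rho(\omega) = (v_0 - u_\alpha)\wedge \omega,
\]
so that $\rho$ is a vector-space isomorphism onto $\ker m$. Choosing any vector-space complement $W\subset C_0$ to $\ker m$, the restriction $m|_W\colon W\to C_1$ is a bijection, so Gaussian elimination yields a chain-homotopy equivalence $C(\hat D)\simeq \ker m$; hence $\rho$ is a quasi-isomorphism once it is shown to be a chain map.

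To verify $\rho(\psi(D)) = \psi(\hat D)$ I evaluate at the special resolution $\hat\alpha'$, noting that the stabilized endpoint lies on the $b$-th strand, so $u_{\alpha'} = v_b$; then
\[
\rho(\psi(D)) = (v_0 - v_b)\wedge v_1\wedge\cdots\wedge v_b = v_0\wedge v_1\wedge\cdots\wedge v_b = \pm\psi(\hat D),
\]
the middle equality using $v_b\wedge v_b = 0$ and the final sign, from the ordering of generators at $\hat\alpha'$, being consistent with $\psi$ only being defined up to sign.

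The principal obstacle is verifying that $\rho$ is a chain map. This amounts to checking that wedging by $v_0 - u_\alpha$ intertwines the merge and split edge maps of $C(D)$ with their counterparts in $C_0$; for this one must (i) choose a sign assignment on the cube of $\hat D$ whose restriction to $C_0$ agrees with a valid sign assignment on the cube of $D$ (with $\Lambda^*\langle v_0\rangle$ a passive tensor factor), and (ii) track how the merge partner $u_\alpha$ transforms along an edge of $D$---under a merge of two circles with generators $v_i$ and $v_{i'}$, if $u_\alpha\in\{v_i,v_{i'}\}$ then $u_\beta$ is the generator of the merged circle, and under a split $u_\alpha$ becomes the corresponding generator in the split resolution. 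With these identifications in place, the chain-map property reduces to direct manipulation of wedge products.
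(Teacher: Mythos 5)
Your proposal is correct and takes essentially the same route as the paper: both split $C(\hat D)$ at the new crossing, realize it as the mapping cone of the surjective merge $m\colon C_0\to C_1$, pass to $\ker m$ (your Gaussian elimination is the paper's observation that $\cone(m)/\ker m$ is acyclic), and send $\omega\mapsto(v_0-u_\alpha)\wedge\omega$, so that $\psi(D)\mapsto v_0\wedge v_1\wedge\cdots\wedge v_b=\psi(\hat D)$. One small slip: since the differential increases $|\alpha|$, it is $C_1$ that is the subcomplex and $C_0$ the quotient, not the reverse --- though this does not affect your argument, as you correctly take the only off-diagonal component to be $m\colon C_0\to C_1$.
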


\begin{proof}
Let $D$ and $\hat D$ be as described above. 
Focusing on the additional positive crossing, $\hat D$ has two associated diagrams: $D_0$ (resp.  $D_1$) where the 0-smoothing  (resp. 1-smoothing) resolves the additional crossing. 
The resolution cube of  $D_1$ is isotopic to the resolution cube of $D$ at corresponding vertices, thus there is a natural identification of $\psi(D)$ and $\psi(D_1)$. 
On the other hand, the resolution cube of $D_0$ is isotopic to the resolution cube of $D\sqcup a_0$. 
See Figure~\ref{fig-r1}. 
\begin{figure}
	\begin{center}\begin{tikzpicture}[scale=0.6]
		\color{red}
		\braidline{3}{2}{0}{0}{4}
		\braidline{3}{1}{0}{1}{1}
		\braidline{3}{2}{0}{2}{5}

		\draw (2,-3.1) node[below]{$\hat D$};

		\color{black}
		\braidline{3}{2}{8}{0}{4}
		\braidline{3}{1}{8}{1}{3}
		\braidline{3}{2}{8}{2}{5}

		\braidline{3}{2}{16}{0}{4}
		\braidline{3}{1}{16}{1}{2}
		\braidline{3}{2}{16}{2}{5}
		\def\dx{8}
		\def\dy{-3}
		\long\def\diagbox#1#2#3{\draw (#1*\dx,#2*\dy) ++(-.2,-3.2) -- ++(2.4,0) -- ++(0,4.9) -- ++(-2.4,0) -- ++(0,-4.9);
		\draw (#1*\dx,#2*\dy) ++(2,-3.1) node[below]{$#3$};}
		\draw (13,-.75) node[above] {$d^\star$};

		\diagbox{1}{0}{D_0}
		\diagbox{2}{0}{D_1}

		\draw[->] (10.5,-.75) -- (15.5,-.75);
	\end{tikzpicture}\end{center}
\caption[The Resolution Cube for $\hat D$: diagram after a transverse RI move.]{\figbld{The Resolution Cube for $\hat D$: diagram after a transverse RI move.}}
\label{fig-r1}
\end{figure}
	
In their respective chain complexes, we will use the same generators for equivalent circles. 
In particular, in each vertex in the resolution cube, we will always label the circle to which $a_0$ attaches as $a_1$. 
We associate $a_0$ to the generator $v_0$, and $a_1$ to $v_1$. 
We also note, by using the same generators for equivalent circles, since
	\[C(D_1)\oplus (v_0\wedge C(D_1))\cong
	C(D)\oplus(v_0\wedge C(D))\cong 
	C(D\sqcup a_0) \cong 
	C(D_0), \]
there is a natural inclusion of the first summand, $\imath:C(D_1) \lhook\joinrel\longrightarrow C(D_0)$.
If we define $w:C(D_0)\too C(D_0)$ by $w(\omega)=(v_0-v_1)\wedge\omega$, the composition $w\circ\imath:C(D_1)\too C(D_0)$ induces an isomorphism of chain complexes between $C(D_1)$ and $(v_0-v_1)\wedge C(D_0)$. 
In particular, if we let $b$ be the braid index of $D$ and thus the braid index of $D_1$, we have
	\begin{align*}
	w\circ\imath(\psi(D_1)) &= w\circ\imath(v_1\wedge\cdots\wedge v_b)\\
	&=(v_0-v_1)\wedge(v_1\wedge\cdots\wedge v_b)\\
	&=v_0\wedge v_1\wedge\cdots\wedge v_b.
	\end{align*}

Now, we consider the chain map $d^\star: C(D_0) \too C(D_1)$, which is the map induced by the cobordism merging $a_0$ and $a_1$. 
This map is the quotient map given by the identification of $v_0$ with $v_1$. 
With this setup, the chain complex $C(\hat D)$ is isomorphic to the mapping cone $\cone(d^\star)$. 

On the chain complex level, $\ker d^\star$ is isomorphic to $(v_0-v_1)\wedge C(D_0)$. 
Since $d^\star$ is surjective, it follows that $C(\hat D)\cong \cone(d^\star)$ is quasi-isomorphic to $\ker d^*\cong (v_0-v_1)\wedge C(D_0)$ via $j:\ker d^\star\lhook\joinrel\longrightarrow \cone(d^\star)$. Thus, we have a quasi-isomorphism
	\[\overline\jmath :(v_0-v_1)\wedge C(D_0)\cong\ker d^\star\too C(\hat D),\]
with 
	\[\overline\jmath(v_0\wedge v_1\wedge \cdots\wedge v_b)=v_0\wedge v_1\wedge\cdots\wedge v_b=\psi(\hat D).\]
	
Letting $\rho:C(D)\too C(\hat D)$ be the composition $\bar\jmath$ after $w\circ\imath$, it follows that $\rho$ is a quasi-isomorphism such that
	\[\rho(\psi(D))=\psi(\hat D).\qedhere\]
\end{proof}

\begin{prop}
\label{prop-r2}
Let $D$ and $\hat D$ be two braid diagrams of a transverse link $L$ related by a single type II Reidemeister move (R2), where $D$ is the diagram with more crossings. 
There is a quasi-isomorphism $\rho:C(D)\too C(\hat D)$ such that $\rho(\psi(D))=\psi(\hat D)$. 
\end{prop}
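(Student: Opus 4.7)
The plan mirrors the R1 argument in Proposition~\ref{prop-r1} but is applied to the 2$\times$2 subcube of $C(D)$ obtained by varying the two new crossings $c_+$ and $c_-$ of the R2 move. Label the four sub-resolutions by $(\epsilon_+,\epsilon_-)\in\{0,1\}^2$, and let $C_{\epsilon_+\epsilon_-}$ denote the corresponding summand of $C(D)$ parametrized by the smoothings of the remaining crossings. Since the Seifert smoothing takes $0$ at positive crossings and $1$ at negative crossings, $\psi(D)$ lives at the $(0,1)$ vertex. At this vertex both new crossings are replaced by vertical strands, the R2 tangle disappears, and there is a canonical identification $C_{01}\cong C(\hat D)$ of chain complexes under which $\psi(D)$ corresponds to $\psi(\hat D)$.

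The first step is to analyze the other three vertices geometrically. The $(0,0)$ and $(1,1)$ resolutions produce the same external tangle --- the top two strands of the R2 region are joined by a cap and the bottom two by a cup --- so $C_{00}$ and $C_{11}$ are canonically identified as vector spaces. The $(1,0)$ resolution produces this same external tangle together with an extra small circle trapped between the two crossings, so $C_{10}$ carries one additional exterior generator $v_\circ$ compared to $C_{00}$ and $C_{11}$. The edges of the subcube are: the two saddles $C_{00}\to C_{01}$ and $C_{01}\to C_{11}$ (which merge or split circles depending on the global closure topology), the split $C_{00}\to C_{10}$ creating the small circle, and the merge $C_{10}\to C_{11}$ identifying $v_\circ$ with an adjacent generator.

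The plan is then to collapse the subcube onto $C_{01}$ via Gaussian elimination. The merge $m:C_{10}\to C_{11}$ is surjective; restricting to the summand of $C_{10}$ on which $m$ is an isomorphism allows the cancellation of $C_{10}$ against a rank-equal summand of $C_{11}$. The residual complex, a three-term mapping cone involving $C_{00}$, $C_{01}$, and the remaining part of $C_{11}$, admits a further cancellation --- e.g., via the saddle $C_{00}\to C_{01}$ or its partner --- that identifies the full subcube with $C_{01}\cong C(\hat D)$. The resulting quasi-isomorphism $\rho:C(D)\to C(\hat D)$ restricts to the canonical identification on the surviving $(0,1)$ summand, so $\rho(\psi(D))=\psi(\hat D)$.

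The main obstacle is handling the anti-commutative sign assignment on the 2$\times$2 subcube in odd Khovanov homology and ensuring that each Gaussian elimination is an honest chain-homotopy equivalence. Because $\psi(D)$ sits in $C_{01}$, which remains untouched by both cancellations, any overall sign introduced by $\rho$ can be absorbed into the sign ambiguity of the invariant itself, consistent with Theorem~\ref{thm-result}.
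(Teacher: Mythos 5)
Your overall architecture---collapse the $2\times 2$ subcube onto the $(0,1)$ vertex by two successive cancellations, first along the merge $C_{10}\to C_{11}$ and then along an edge out of $C_{00}$, leaving $C_{01}\cong C(\hat D)$ and hence $\psi(D)$ untouched---is exactly the strategy of the paper's proof, which phrases the same two cancellations as an acyclic subcomplex followed by an acyclic quotient. But the bookkeeping of which summands cancel against which is wrong as written, and in a way that breaks the argument. Since $C_{10}$ carries the extra generator $v_\circ$ of the small trapped circle, $\operatorname{rank}C_{10}=2\operatorname{rank}C_{11}$, and the merge $m:C_{10}\to C_{11}$ is surjective with half-rank kernel. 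The first cancellation therefore eliminates \emph{all} of $C_{11}$ together with a half-rank summand $X\subset C_{10}$ on which $m$ is an isomorphism (e.g.\ the span of monomials not involving $v_\circ$). There is no ``remaining part of $C_{11}$''; the leftover term in the residual three-term complex is a summand of $C_{10}$.

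More seriously, that residual complex $C_{00}\to C_{01}\oplus (C_{10}/X)$ cannot be collapsed ``via the saddle $C_{00}\to C_{01}$'': that saddle is a merge or a split of circles depending on the global closure, hence never an isomorphism, and cancelling along it would in any case delete the very summand $C_{01}$ that carries $\psi(D)$, contradicting your own closing claim that $C_{01}$ is untouched. The cancellation that works is $C_{00}$ against the residual summand of $C_{10}$: the split $\omega\mapsto(v_\circ-v_1)\wedge\omega$, projected to the complement of the cancelled summand, is an isomorphism $C_{00}\to C_{10}/X$, so the cone on it is acyclic and the inclusion of $C_{01}$ into the residual complex is a quasi-isomorphism. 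This is precisely the paper's proof: there $X$ is taken to be the kernel of contraction with $v_2^*$, the cone on the isomorphism $X\to C(D_{11})$ is the acyclic subcomplex $A$, and $(C(D)/A)/C(D_{01})$ is the acyclic cone on the isomorphism $C(D_{00})\to C(D_{10})/X$. With these two identifications repaired, your Gaussian-elimination version goes through and coincides with the paper's argument.
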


\begin{proof}
Let $D$ and $\hat D$ be as described above. 
The resolution cube for $D$ is illustrated in Figure~\ref{fig-r2}. 
We note $C(D)$ can be represented by the diagram below as a mapping cone of a map between two mapping cones.
	\begin{center}\begin{tikzcd}
		& C(D_{01}) \arrow[rd, "d^{\star1}"] & \\
		C(D_{00}) \arrow[ru, "d^{0\star}"] \arrow[rd, "\wedge(v_2-v_1)"'] && C(D_{11})\\
		& C(D_{10}) \arrow[ru, "v_2\sim v_3"']
	\end{tikzcd}\end{center}
By the arrangement in  Figure~\ref{fig-r2}, it follows that $\psi(D)\in C(D_{01})$.
\begin{figure}
	\begin{center}\begin{tikzpicture}[scale=\rsc]
		\def\dx{4}
		\def\dy{-3}
		\long\def\diagbox#1#2#3{\draw (#1*\dx,#2*\dy) ++(-.2,-1.7) -- ++(1.4,0) -- ++(0,3.4) -- ++(-1.4,0) -- ++(0,-3.4);
		\draw (#1*\dx,#2*\dy) ++(1,-1.6) node[below]{$#3$};}
		\color{red}
		\twobraid{2}{-4}{0}{1}{-1}
		\draw (-3,-1.6) node[below] {$D$};
		\color{white}
		\draw (13,0) node {.};
		\color{black}
		\tworesdiag{2}{0}{0}{1}{-1}
		\diagbox{0}{1}{D_{00}}
		\diagbox{1}{0}{D_{01}}
		\diagbox{1}{2}{D_{10}}
		\diagbox{2}{1}{D_{11}}

		\draw[->] (1.5,-2)--(3.5,-1);
		\draw[->] (1.5,-4)--(3.5,-5);
		\draw[->] (5.5,-1)--(7.5,-2);
		\draw[->] (5.5,-5)--(7.5,-4);
	\end{tikzpicture}\end{center}
\caption[The resolution cube for the diagram after an RII move.]{\figbld{The resolution cube for $D$: the diagram after an RII move.}}
\label{fig-r2}
\end{figure}

We let $X\subset C(D_{10})$ be the kernel of the contraction with $v^*_2$, the dual of the generator associated to the disjoint circle in $D_{10}$. 
Note, since $X$ and $C(D_{11})$ are isomorphic via the quotient map $v_2\sim v_3$, it follows that the subquotient complex corresponding to the isomorphism's mapping cone $A=$
	\begin{center}\begin{tikzcd}
		&&&&& C(D_{11})\\
		&&&&X \arrow[ru, "v_2\sim v_3"']
	\end{tikzcd}\end{center}
is acyclic. 
Thus, $C(D)/A$, represented in the diagram below, is quasi-isomorphic to $C(D)$. 
	\begin{center}\begin{tikzcd}
		& C(D_{01})& \\
		C(D_{00}) \arrow[ru, "d^{0,\star}"] \arrow[rd, "\wedge(v_2-v_1)"'] &&&& \\
		& C(D_{10})/X. 
	\end{tikzcd}\end{center}

Let $q:C(D)\too C(D)/A$ be that quotient map, which is a quasi-iso\-mor\-phism. Since $\psi(D)\in C(D_{01})$ is the sole representative in its equivalence class in the quotient, then $q(\psi(D))=\psi(D)$.
Furthermore, since $C(D_{00})$ and $C(D_{10})/X$ are isomorphic under the map generated from $1\mapsto (v_1-v_2)$, then $(C(D)/A)/C(D_{01})$ is acyclic. 
Thus $C(D_{01})$ is quasi-isomorphic to $C(D)/A$, and the map is the natural inclusion map. 
Under this map, we have that $\psi(D)\in C(D_{01})\subset C(D)$ is unchanged. Finally, since there is a trivial isomorphism between $C(\hat D)$ and $C(D_{01})$, it follows that  $\psi(\hat D) = \rho(\psi(D))$, where $\rho$ is the quasi-isomorphism between $C(D)$ and $C(\hat D)$ obtained from the compositions of the quotient maps above and the trivial isomorphism from $C(D_{01})$ to $C(\hat D)$.
\end{proof}

\begin{prop}
\label{prop-r3}
Let $D$ and $\hat D$ be two braid diagrams of a transverse link $L$ related by a single type III Reidemeister move (R3). 
There is a chain complex $C$ and quasi-isomorphisms,
	\[\rho:C(D)\too C\qquad\text{and}\qquad\hat\rho: C(\hat D)\too C,\]
such that $\rho(\psi(D))=\hat\rho(\psi(\hat D))$. 
\end{prop}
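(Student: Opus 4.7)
The plan is to study the 3-crossing sub-cube of $C(D)$ and $C(\hat D)$ corresponding to the R3 region, to cancel acyclic subquotients on each side so that both complexes reduce to a common intermediate complex $C$, and to check that the reductions send $\psi(D)$ and $\psi(\hat D)$ to the same element of $C$.

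To set things up, I would localize to the R3 region. Since $D$ and $\hat D$ are braid diagrams, the three crossings involved in the move share a sign, which I may assume positive (the negative case is symmetric). Label the $2^3$ sub-cube vertices $D_{ijk}$ and $\hat D_{ijk}$, and use the same smoothing labels outside the R3 region so that the resolution diagrams agree there. Both invariants live at the all-zero vertices, $\psi(D)\in C(D_{000})$ and $\psi(\hat D)\in C(\hat D_{000})$, because the all-zero resolution of the three positive crossings produces three parallel strands locally, matching Definition~\ref{defn-opi}. The local diagrams $D_{000}$ and $\hat D_{000}$ agree (three parallel strands inside, identical outside), so there is a canonical chain isomorphism $C(D_{000})\cong C(\hat D_{000})$ identifying $\psi(D)$ with $\psi(\hat D)$. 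This common complex is the candidate $C$.

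The core step is to show that each of $C(D)$ and $C(\hat D)$ is quasi-isomorphic to $C$ via a map that restricts to the identity on $C(D_{000})$, respectively $C(\hat D_{000})$. Following the Gaussian-elimination template of Proposition~\ref{prop-r2}, I would identify inside each sub-cube a sequence of acyclic subquotient pieces that pair up the seven non-$(0,0,0)$ vertices via merge/split edges that become isomorphisms after restriction to kernels of contractions against duals of the extra generators introduced by $1$-smoothings. Quotienting $C(D)$ by this subcomplex $A$ (and $C(\hat D)$ by the corresponding $\hat A$) should leave $C(D_{000})\cong C\cong C(\hat D_{000})$ as the only surviving piece. The desired $\rho$ and $\hat\rho$ are then the compositions of these quotient maps with the canonical identification of the quotients with $C$, which sends $\psi(D)$ and $\psi(\hat D)$ to the same class.

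The main obstacle will be sign-bookkeeping across the twelve edges of the $2^3$ sub-cube: one must verify, using the sign assignment from Section~\ref{ssec-sa}, that the acyclic subquotients can be chosen so as to leave the $(0,0,0)$-summand intact on both sides and that the residual chain maps into $C$ truly coincide with the canonical identification of the two all-zero vertices on the nose, rather than differing by a homotopy. These computations are explicit but tedious, but once the two triangles commute at the chain level the conclusion is immediate.
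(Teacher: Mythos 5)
There is a fatal gap at the core step. You propose to cancel the seven vertices $D_{\alpha}$, $\alpha\neq(0,0,0)$, of the local R3 cube against one another in acyclic subquotients, so that the quotient of $C(D)$ by the resulting subcomplex is the single vertex $C(D_{000})$. This cannot be done. The span of the seven non-initial vertices is indeed a subcomplex (the cube differential only increases $|\alpha|$), but it is not acyclic: if it were, the quotient map $C(D)\to C(D_{000})$ would be a quasi-isomorphism, and $Kh'$ of the closed braid would agree, up to a shift, with the homology of the fully $0$-resolved positive braid diagram, i.e.\ with $\Lambda^*(R^b)$ for the $b$-component unlink. That would make $Kh'$ (and hence the Jones polynomial) depend only on the braid index, which is absurd; already the closure of $\sigma_1\sigma_2\sigma_1$ does not have the invariants of the $3$-component unlink. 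No Gaussian-elimination scheme pairs off all seven remaining vertices. The paper's reduction cancels only ``one and a half'' vertices: $C(D_{000})$ against the summand $v_0\wedge C(D_{010})$ (where $v_0$ is the generator of the small circle appearing in that resolution), and then $C(D_{010})/v_0$ against the diagonal copy of $C(D_{011})\cong C(D_{110})$. What survives is a five-vertex complex $C$ (Figure~\ref{fig-riiiDl}), not a single vertex; the same reduction applied to $\hat D$ yields an isomorphic five-vertex complex, and the proposition follows because both quasi-isomorphisms restrict to the identity on the vertex carrying the invariant.

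A secondary point: in the paper's convention the three crossings of the braid relation are negative, so the distinguished resolution takes the $1$-smoothing there and $\psi(D)$ sits at the terminal vertex $C(D_{111})$; with your all-positive convention it sits at $C(D_{000})$. Either way, the essential design constraint is that the cancellations must avoid the vertex carrying $\psi$ (the paper's maps are the identity on $C(D_{111})$ and $C(\hat D_{111})$), and one must then exhibit an explicit isomorphism between the two surviving five-vertex complexes matching those distinguished vertices. Your plan instead attempts to cancel everything except the distinguished vertex, which is precisely what is impossible.
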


\begin{proof}
Let $D$ and $\hat D$ be two link diagrams that are related by a single Reidemeister move of type 3, (R3).
Focusing on the three crossings involved in the (R3) move, we can represent $C(D)$ via the cube depicted in Figure~\ref{fig-r3D}. 
From the blue map in Figure~\ref{fig-r3D}
	\[d^{0\star0}:C(D_{000}) \too C(D_{010})\]
we define
	\[\tilde{d}^{0\star0}: \omega\mapsto d^{0\star0}(\omega)\wedge v_0,\]
where $v_0$ is the generator associated to the sole circle entirely shown in $D_{010}$.
Thus, if we denote the complex from the mapping cone of $\tilde{d}^{0\star0}:C(D_{000})\too C(D_{010})\wedge v_0$ by $C(\tilde D_{0{\star}0})$, it follows that there is a quasi-isomorphism between $C(D)$ and $C(D)/C(\tilde D_{0{\star}0})$. 
We also note that this quasi-isomorphism is the identity map on parts of the cube uninvolved in the quotient, namely on $C(D_{111})$. 
\begin{figure}
	\begin{center}\begin{tikzpicture}[scale=\rsc]
		\def\dx{7}
		\def\dy{-7.5}
		\long\def\diagbox#1#2#3{\draw (#1*\dx,#2*\dy) ++(-.2,-3.2) -- ++(2.4,0) -- ++(0,4.8) -- ++(-2.4,0) -- ++(0,-4.8);
		\draw (#1*\dx,#2*\dy) ++(2,-3.1) node[below]{$#3$};}
		\color{red}
		\threebraid{3}{-3}{0}{-1}{-2}{-1}
		\draw (-1,-3) node[below]{D};
		\color{black}
		\threeresdiagtall{3}{0}{0}{-1}{-2}{-1}{1}
		\diagbox{0}{1}{D_{000}}
		\diagbox{1}{0}{D_{001}}
		\diagbox{1}{1}{D_{010}}
		\diagbox{1}{2}{D_{100}}
		\diagbox{2}{0}{D_{011}}
		\diagbox{2}{1}{D_{101}}
		\diagbox{2}{2}{D_{110}}
		\diagbox{3}{1}{D_{111}}
		\draw[->] (3,-6.5)--++(3,4);
		\draw[->,very thick, blue] (3, -8.25)--++(3,0);
		\draw[->] (3,-10)--++(3,-4);

		\draw[->] (10,-.75)--++(3,0); 
		\draw[->] (10,-2.5)--++(1.2,-1.6) ++(.6,-.8)--++(1.2,-1.6);
		\draw[->,very thick, dashed, green!50!black] (10,-6.5)--++(3,4);
		\draw[->,very thick, green!50!black] (10,-10)--++(3,-4);
		\draw[->] (10,-14)--++(1.2,1.6) ++(.6,.8)--++(1.2,1.6);
		\draw[->] (10,-15.75)--++(3,0);

		\draw[->] (17,-2.5)--++(3,-4);
		\draw[->] (17,-8.25)--++(3,0);
		\draw[->] (17,-14)--++(3,4);
	\end{tikzpicture}\end{center}
\caption[The resolution cube for the diagram before an RIII move.]{\figbld{The resolution cube for $D$: the diagram before an RIII move.}}
\label{fig-r3D}
\end{figure}

We define $\mathring C$ to be the complex from the mapping cone of the identification of $C(D_{010})/v_0$ with $C(D_{011})\cong C(D_{110})$, shown in Figure~\ref{fig-r3D0}. 
Since the map is an isomorphism, this complex is acyclic. 
It can be identified with either green arrow in $C(D)$ in Figure~\ref{fig-r3D}. 
There is a chain map $\Psi: \mathring C \too C(D)/C(\tilde D_{0{\star}0})$, given by the identification of $C(D_{010})/v_0$ in $\mathring C$ with $C(D_{010})/(C(D_{010})\wedge v_0)$ in $C(D)/C(\tilde D_{0{\star}0})$, and the map from the codomain in $\mathring C$ to the quotient via the diagonal identification of $C(D_{011})$ and $C(D_{110})$. 
\begin{figure}
	\begin{center}\begin{tikzpicture}[scale=\rsc]
		\draw (-6.5,-.75) node{$C\Bigg($};
		\draw (-1.2,-.75) node{$\Bigg)/ v_0$};
		\threeres{3}{-5}{0}{-1}{-2}{-1}{0}{1}{0}
		\draw[->] (0.6,-.75) -- (4,-.75);
		\draw (5.5,-.75) node{$C\Bigg($};
		\braidline{3}{1}{7}{0}{5}
		\braidline{2}{1}{8}{1}{6}
		\braidline{3}{1}{7}{2}{4}
		\draw (10,-.75) node{$\Bigg)$};
	\end{tikzpicture}\end{center}
\caption[The resolution cube for subquotient $\mathring C$.]{\figbld{The resolution cube for $\mathring C$.}}
\label{fig-r3D0}
\end{figure}

There is a further chain map $\Phi$ from $C(D)/C(\tilde D_{0{\star}0})$ to the complex from the diagram in Figure~\ref{fig-riiiDl}, which we call $C$. 
Up to signs, this map is given by identifying $C(D_{001})$ with $C(A)$, $C(D_{100})$ with $C(B)$, $C(D_{011})$ and $C(D_{110})$ with $C(\Gamma)$, $C(D_{111})$ with $C(\Delta)$, and the map from $C(D_{010})$ being trivial. 
It is important to note that these signs can be arranged so that they do not impact the mapping between $C(D_{111})$ and $C(E)$ or adjacent maps. 
So defined, these chain maps form a short exact sequence,
  	\[0\too\mathring C \overset{\Psi}{\too} C(D)/C(\tilde D_{0{\star}0})\overset{\Phi}{\too} C\too 0.\]
Since $\mathring C$ is acyclic, $\Phi$ is a quasi-isomorphism. 
Furthermore, we note that $\Phi$ is the identity on $C(D_{111})$. 
Thus, there is a quasi-isomorphism $\rho:C(D)\too C$, which is the identity when restricted to $C(D_{111})$. 
\begin{figure}
	\begin{center}\begin{tikzpicture}[scale=\rsc]
		\def\dx{7}
		\def\dy{-6}
		\long\def\diagbox#1#2#3{\draw (#1*\dx,#2*\dy) ++(-.2,-3.2) -- ++(2.4,0) -- ++(0,4.8) -- ++(-2.4,0) -- ++(0,-4.8);
		\draw (#1*\dx,#2*\dy) ++(2,-3.1) node[below]{$#3$};}
		\diagbox{1}{1/4}{A}
		\diagbox{1}{7/4}{B}
		\diagbox{2}{1/4}{\Gamma}
		\diagbox{2}{7/4}{\Delta}
		\diagbox{3}{1}{E}
		\threeres{3}{7}{1}{-1}{-2}{-1}{0}{0}{1}
		\threeres{3}{7}{7}{-1}{-2}{-1}{1}{0}{0}
		\threeres{3}{14}{1}{-1}{-2}{-1}{0}{1}{1}
		\threeres{3}{14}{7}{-1}{-2}{-1}{1}{0}{1}
		\threeres{3}{21}{4}{-1}{-2}{-1}{1}{1}{1}
		\draw[->] (10,-2)--(13,-2); 
		\draw[->] (10,-3.75)--(11.4,-6.55) (11.6,-6.95)--(13,-9.75);
		\draw[->] (10,-9.75)--(13,-3.75);
		\draw[->] (10,-11.5)--(13,-11.5);
		\draw[->] (17,-2)--(20,-5);
		\draw[->] (17,-11.5)--(20,-8.5);
	\end{tikzpicture}\end{center}
\caption[The resolution complex for reduced $D$.]{\figbld{The resolution complex for $C$: the reduced resolution complex of $D$.}}
\label{fig-riiiDl}
\end{figure}

For clarity, we simplify the diagram of $C$ to the diagram in Figure~\ref{fig-r3D1simple}.
This will make the identification of $C$ with the contracted version of $C(\hat D)$ more visually obvious.
\begin{figure}
	\begin{center}\begin{tikzpicture}[scale=\rsc]
		\def\dx{6}
		\def\dy{-6}
		\long\def\diagbox#1#2#3{\draw (#1*\dx,#2*\dy) ++(-.2,-1.7) -- ++(2.4,0) -- ++(0,3.4) -- ++(-2.4,0) -- ++(0,-3.4);
		\draw (#1*\dx,#2*\dy) ++(2,-1.6) node[below]{$#3$};}

		\diagbox{0}{0}{A}
		\diagbox{0}{1}{B}
		\diagbox{1}{0}{\Gamma}
		\diagbox{1}{1}{\Delta}
		\diagbox{2}{.5}{E}
		\braidline{3}{1}{0}{0}{2}
		\braidline{3}{2}{0}{1}{2}

		\braidline{3}{2}{0}{4}{2}
		\braidline{3}{1}{0}{5}{2}

		\begin{scope}[yshift = -.75cm]
		\braidline{3}{1}{6}{0}{2}
		\end{scope}
		\draw[ultra thick](6,1.5)--++(0,-.75) ++(1,0)--++(0,.75) ++(1,0)--++(0,.-.75);
		\draw[ultra thick](6,-.75)--++(0,-.75) ++(1,0)--++(0,.75) ++(1,0)--++(0,.-.75);

		\begin{scope}[yshift = -.75cm]
		\braidline{3}{2}{6}{4}{2}
		\end{scope}
		\draw[ultra thick](6,-4.5)--++(0,-.75) ++(1,0)--++(0,.75) ++(1,0)--++(0,.-.75);
		\draw[ultra thick](6,-6.75)--++(0,-.75) ++(1,0)--++(0,.75) ++(1,0)--++(0,.-.75);

		\draw[ultra thick](12,-1.5)--++(0,-3) ++(1,0)--++(0,3) ++(1,0)--++(0,.-3);

		\draw[->] (3,0)--(5,0);
		\draw[->] (3,-6)--(5,-6);
		\draw[->] (3,-1)--(3.8,-2.6) (4.2,-3.4)--(5,-5);
		\draw[->] (3,-5)--(5,-1);
		\draw[->] (9,-1)--(11,-2);
		\draw[->] (9,-5)--(11,-4);
	\end{tikzpicture}\end{center}
\caption[A simplified reduced complex of $D$.]{\figbld{A simplified presentation of $D^0$: the reduced resolution complex of $D$.}}
\label{fig-r3D1simple}
\end{figure}

In the diagram for $C(D)$, we can think of $C$ as the contraction of the two thick edges. 
This works because $C(D_{000})\cong C(D_{110})\,\,(\cong C(D_{011}))$, and $C(D_{010})$ comes from $D_{010}$, which is $D_{000}\sqcup O$. 

Now, we represent $C(\hat D)$ by the diagram in Figure~\ref{fig-r3hD}. 
Note, like with $C(D)$, we have $C(\hat D_{000})\cong C(\hat D_{110})\cong C(\hat D_{011})$, and $C(\hat D_{010})$ comes from $\hat D_{010}$, which is $\hat D_{000}\sqcup O$. 
Thus, just as with $C(D)$, we can contract the two thick edges in the diagram, giving us $\hat C$.
\begin{figure}
	\begin{center}\begin{tikzpicture}[scale=\rsc]
		\def\dx{7}
		\def\dy{-7.5}
		\long\def\diagbox#1#2#3{\draw (#1*\dx,#2*\dy) ++(-.2,-3.2) -- ++(2.4,0) -- ++(0,4.8) -- ++(-2.4,0) -- ++(0,-4.8);
		\draw (#1*\dx,#2*\dy) ++(2,-3.1) node[below]{$#3$};}
		\color{red}
		\threebraid{3}{-3}{0}{-2}{-1}{-2}
		\draw (-1,-3) node[below]{$\hat D$};
		\color{black}
		\threeresdiagtall{3}{0}{0}{-2}{-1}{-2}{1}
		\diagbox{0}{1}{\hat D_{000}}
		\diagbox{1}{0}{\hat D_{001}}
		\diagbox{1}{1}{\hat D_{010}}
		\diagbox{1}{2}{\hat D_{100}}
		\diagbox{2}{0}{\hat D_{011}}
		\diagbox{2}{1}{\hat D_{101}}
		\diagbox{2}{2}{\hat D_{011}}
		\diagbox{3}{1}{\hat D_{111}}

		\draw[->] (3,-6.5)--++(3,4);
		\draw[->,very thick, blue] (3, -8.25)--++(3,0);
		\draw[->] (3,-10)--++(3,-4);

		\draw[->] (10,-.75)--++(3,0); 
		\draw[->] (10,-2.5)--++(1.2,-1.6) ++(.6,-.8)--++(1.2,-1.6);
		\draw[->,very thick, green!50!black] (10,-6.5)--++(3,4);
		\draw[->,very thick, dashed, green!50!black] (10,-10)--++(3,-4);
		\draw[->] (10,-14)--++(1.2,1.6) ++(.6,.8)--++(1.2,1.6);
		\draw[->] (10,-15.75)--++(3,0);

		\draw[->] (17,-2.5)--++(3,-4);
		\draw[->] (17,-8.25)--++(3,0);
		\draw[->] (17,-14)--++(3,4);
		\end{tikzpicture}\end{center}
\caption[The resolution cube for the diagram after an RIII move.]{\figbld{The resolution cube for $\hat D$: the diagram after an RIII move.}}
\label{fig-r3hD}
\end{figure}

The contracted diagram is given in Figure~\ref{fig-r3hDC} and labeled according to how vertices will correspond with $C$. 
\begin{figure}
	\begin{center}\begin{tikzpicture}[scale=\rsc]
		\def\dx{7}
		\def\dy{-6}
		\long\def\diagbox#1#2#3{\draw (#1*\dx,#2*\dy) ++(-.2,-3.2) -- ++(2.4,0) -- ++(0,4.8) -- ++(-2.4,0) -- ++(0,-4.8);
		\draw (#1*\dx,#2*\dy) ++(2,-3.1) node[below]{$#3$};}
		\diagbox{1}{1/4}{\hat B}
		\diagbox{1}{7/4}{\hat A}
		\diagbox{2}{1/4}{\hat \Gamma}
		\diagbox{2}{7/4}{\hat \Delta}
		\diagbox{3}{1}{E'}
		\threeres{3}{7}{1}{-2}{-1}{-2}{0}{0}{1}
		\threeres{3}{7}{7}{-2}{-1}{-2}{1}{0}{0}
		\threeres{3}{14}{1}{-2}{-1}{-2}{1}{0}{1}
		\threeres{3}{14}{7}{-2}{-1}{-2}{1}{1}{0}
		\threeres{3}{21}{4}{-2}{-1}{-2}{1}{1}{1}
		\draw[->] (10,-2)--(13,-2); 
		\draw[->] (10,-9.75)--(11.4,-6.95) (11.6,-6.55)--(13,-3.75);
		\draw[->] (10,-3.75)--(13,-9.75);
		\draw[->] (10,-11.5)--(13,-11.5);
		\draw[->] (17,-2)--(20,-5);
		\draw[->] (17,-11.5)--(20,-8.5);
	\end{tikzpicture}\end{center}
\caption[The resolution complex of contracted $\hat D$.]{\figbld{The resolution complex of the contracted version of $\hat D$.}}
\label{fig-r3hDC}
\end{figure}

These relations give us a quasi-isomorphism $\hat\rho: C(\hat D)\too \hat C$ and as before, the quasi-isomorphism is the identity on $C(\hat D_{111})$. 
The simplified diagram for $C(\hat D^1)$ is presented in Figure~\ref{fig-r3hDCsimple}. 
We note that, except for swapping the position of the leftmost nodes, this corresponds exactly with the simplified diagram of $C$, thus, $C\cong\hat C$. 
\begin{figure}
	\begin{center}\begin{tikzpicture}[scale=\rsc]
		\def\dx{6}
		\def\dy{-6}
		\long\def\diagbox#1#2#3{\draw (#1*\dx,#2*\dy) ++(-.2,-1.7) -- ++(2.4,0) -- ++(0,3.4) -- ++(-2.4,0) -- ++(0,-3.4);
		\draw (#1*\dx,#2*\dy) ++(2,-1.6) node[below]{$#3$};}

		\diagbox{0}{0}{\hat B}
		\diagbox{0}{1}{\hat A}
		\diagbox{1}{0}{\hat \Gamma}
		\diagbox{1}{1}{\hat \Delta}
		\diagbox{2}{.5}{\hat E}
		\braidline{3}{2}{0}{0}{2}
		\braidline{3}{1}{0}{1}{2}

		\braidline{3}{1}{0}{4}{2}
		\braidline{3}{2}{0}{5}{2}

		\begin{scope}[yshift = -.75cm]
		\braidline{3}{1}{6}{0}{2}
		\end{scope}
		\draw[ultra thick](6,1.5)--++(0,-.75) ++(1,0)--++(0,.75) ++(1,0)--++(0,.-.75);
		\draw[ultra thick](6,-.75)--++(0,-.75) ++(1,0)--++(0,.75) ++(1,0)--++(0,.-.75);

		\begin{scope}[yshift = -.75cm]
		\braidline{3}{2}{6}{4}{2}
		\end{scope}
		\draw[ultra thick](6,-4.5)--++(0,-.75) ++(1,0)--++(0,.75) ++(1,0)--++(0,.-.75);
		\draw[ultra thick](6,-6.75)--++(0,-.75) ++(1,0)--++(0,.75) ++(1,0)--++(0,.-.75);

		\draw[ultra thick](12,-1.5)--++(0,-3) ++(1,0)--++(0,3) ++(1,0)--++(0,.-3);

		\draw[->] (3,0)--(5,0);
		\draw[->] (3,-6)--(5,-6);
		\draw[->] (3,-5)--(3.8,-3.4) (4.2,-2.6)--(5,-1);
		\draw[->] (3,-1)--(5,-5);
		\draw[->] (9,-1)--(11,-2);
		\draw[->] (9,-5)--(11,-4);
	\end{tikzpicture}\end{center}
\caption[A simplified presentation of contracted $\hat D$.]{\figbld{A simplified presentation of the resolution complex for the contracted version of $\hat D$.}}
\label{fig-r3hDCsimple}
\end{figure}
\end{proof}

Since the chain complexes are bounded, having proved Propositions~\ref{prop-r1}-\ref{prop-r3},  a more precise formulation of Theorem~\ref{thm-result} from Subsection~\ref{sec-result} follows.

\begin{thm}
\label{thm-inv}
Given two diagrams $D$ and $D'$ of the same transverse link $L$, there is an isomorphism $\rho:Kh'(D)\too Kh'(D')$ 
such that $\psi(D')=\rho(\psi(D))$. 
\end{thm}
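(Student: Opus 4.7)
The plan is to invoke the transverse Markov theorem (Theorem~\ref{thm-tmt}) to reduce invariance of $\psi$ to invariance under a finite sequence of atomic moves, and then to assemble Propositions~\ref{prop-r1}--\ref{prop-r3} with the trivial identification under braid conjugation. Because all the chain complexes in sight are bounded, every quasi-isomorphism descends to an honest isomorphism on $Kh'$, so each atomic move will produce an isomorphism on homology carrying $\psi$ to $\psi$; composing along the Markov sequence then yields the theorem.

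I would first dispense with braid conjugation: two braids related by conjugation have planar-isotopic closure diagrams, inducing a canonical chain isomorphism that identifies the distinguished resolution $\alpha'$ and hence $\psi(D)$ with $\psi(\hat D)$ on the nose. For a positive braid stabilization (a transverse R1 move) and for an R2 move, Propositions~\ref{prop-r1} and~\ref{prop-r2} directly supply explicit quasi-isomorphisms $\rho\colon C(D)\too C(\hat D)$ with $\rho(\psi(D))=\psi(\hat D)$, which then pass to the desired homology isomorphism with no further work.

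The R3 case requires slightly more care, and is where the only real subtlety of the argument lies. Proposition~\ref{prop-r3} produces not a single map but a roof of quasi-isomorphisms
\[
C(D)\overset{\rho}{\too} C \overset{\hat\rho}{\longleftarrow} C(\hat D),
\]
together with the equality $\rho(\psi(D))=\hat\rho(\psi(\hat D))$ in $C$. Since all three complexes are bounded, both $\rho$ and $\hat\rho$ induce honest isomorphisms on $Kh'$, so one may invert $\hat\rho$ on homology and define the R3 isomorphism as $\hat\rho^{-1}\circ\rho\colon Kh'(D)\too Kh'(\hat D)$; the common-image identity then immediately forces this composite to send $\psi(D)$ to $\psi(\hat D)$.

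Putting these pieces together, given any two braid diagrams $D$ and $D'$ of the same transverse link, the transverse Markov theorem furnishes a finite sequence of braid conjugations, R1, R2, and R3 moves relating them, and each move in the sequence yields one of the homology isomorphisms above. Their composition is the required $\rho\colon Kh'(D)\too Kh'(D')$ with $\rho(\psi(D))=\psi(D')$. The remainder is bookkeeping, and the only nontrivial point, the zig-zag inversion at R3, is made legitimate precisely by the boundedness of the complexes.
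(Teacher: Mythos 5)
Your proposal is correct and follows essentially the same route as the paper: the paper likewise derives Theorem~\ref{thm-inv} directly from the transverse Markov theorem, the trivial identification under braid conjugation, and Propositions~\ref{prop-r1}--\ref{prop-r3}, using boundedness of the complexes to promote the quasi-isomorphisms (and to invert the R3 zig-zag) to isomorphisms on homology. Your write-up simply makes explicit the one-sentence assembly the paper leaves implicit.
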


Hence, we can unambiguously write $\psi(L)$ instead of $\psi(D)$. 

Since $Kh'(L)$ is not known to be natural, $Kh'(L)$ is only (currently) known to be well-defined up to automorphism. 
Above, we have shown that there is a well-defined map $\rho$ that takes $\psi(D)$ to $\pm\psi(D')$ associated to any sequence of transverse Markov moves from $D$ to $D'$. 
In particular, whether $\psi(D)$ vanishes, whether $\psi(D)$ is $n$-torsion, or  whether $\psi(D)$ is divisible by $n$ are all well-defined invariants of the transverse link type.

\section{Reduced Odd Plamenevskaya Invariant}
\label{sec-rh}
In this section, we examine the invariant $\psi(L)$ in reduced odd Khovanov homology. 
In Section~\ref{ssec-redcc}, we define the reduced homology $\overline C(D)$, first defined in~\cite[Section~4]{ORSz}. 
The relationship between this chain complex and the full odd Khovanov chain complex is stated in Proposition~\ref{prop-ccsplit}, and is extended to their homologies in Corollary~\ref{cor-okhsplit}.

In Section~\ref{ssec-ropi}, we define a reduced version of the odd Plamenevskaya invariant, a class in the reduced homology $\overline\psi(D)\in\overline{Kh'}(L)$.
With our maps defined explicitly, it will be possible to identify $\psi(D)$ in the full homology with the reduced version of the invariant under the relationship between the full and reduced homologies, which we state precisely in Proposition~\ref{prop-opiinrokh}. 
From this, it follows that $\overline{\psi}(D)$ is a transverse link invariant, stated as Corollary~\ref{cor-rpsitli}. Thus, we can unambiguously write $\overline\psi(L)$. 

\subsection{Reduced Odd Khovanov Homology}
\label{ssec-redcc}
Reduced odd Khovanov homology is defined first on the level of complexes associated to a link diagram $D$. 
There are two definitions for the reduced chain complex: a basepoint-dependent definition, and a basepoint-independent definition. 
They are isomorphic, and the basepoint-dependent definition is useful for proving properties of the reduced odd Khovanov homology. 
For the basepoint-dependent one, we take a point, $p\in D$ not at one of the crossings. 
In each resolution, this point will belong to a particular circle. 
Choose labelings of the circles so that in every resolution this circle is labelled $a_p$, and define
	\[\overline{C}^{(p)}(D)= v_p\wedge C(D)\subset C(D).\]
As a consequence of Proposition~\ref{prop-chipd}, we will see for $p,q\in D$, that $\overline{C}^{(p)}(D)\cong\overline{C}^{(q)}(D)$. 

The base-point-independent definition comes from the exterior algebras $\Lambda^* V_\alpha$ that make up the direct sum that defines $C(D)$. 
For each resolution $\alpha$ with $V_\alpha=\langle v_1,\dots,v_n\rangle$, we define $\varphi_\alpha:V_\alpha\too R$ by
	\[\varphi_\alpha: \sum r^iv_i\mapsto\sum r^i.\]
We define $\Lambda^*_\circ V_\alpha$ to be the subalgebra generated by the kernel of $\varphi_\alpha$. 
That is, $\Lambda^*_\circ V_\alpha = \Lambda^*(\ker\varphi_\alpha)$. 
Then, we define $\overline{C}(D)$ to be the subcomplex of $C(D)$ corresponding to sum of all $\Lambda^*_\circ V_\alpha$. 
That this is a subcomplex is a consequence of the following proposition.

\begin{prop}
\label{prop-diffonred}
For each $r$, $d(\overline C^r(D))\subset \overline C^{r+1}(D)$. That is, $\overline C(D)$ is a subcomplex of $C(D)$.
\end{prop}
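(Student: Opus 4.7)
The plan is to reduce to an edge-by-edge check. Because the differential $d$ decomposes as a signed sum of edge maps, one for each edge of the resolution cube, it suffices to verify that each individual edge map carries $\Lambda^*_\circ V_\alpha$ into $\Lambda^*_\circ V_\beta$ when going from resolution $\alpha$ to resolution $\beta$. The sign assignment only rescales each edge map, so signs can be safely ignored for this containment.

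First I would treat merge edges. When the cobordism merges circles $a_{i_0}$ and $a_{i_1}$, the edge map is induced by the quotient $q \colon V_\alpha \to V_\beta = V_\alpha / (v_{i_0} - v_{i_1})$ already described in the proof of Proposition~\ref{prop-tpsic}. The key observation is that $\varphi_\beta \circ q = \varphi_\alpha$: identifying $v_{i_0}$ with $v_{i_1}$ only combines two coefficients into one without changing the total sum. Hence $q$ restricts to a linear map $\ker \varphi_\alpha \to \ker \varphi_\beta$, and therefore its induced map on exterior algebras sends $\Lambda^*(\ker \varphi_\alpha) = \Lambda^*_\circ V_\alpha$ into $\Lambda^*(\ker \varphi_\beta) = \Lambda^*_\circ V_\beta$.

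Next I would treat split edges. When the cobordism splits circle $a$ into $a_0, a_1$, the map is $\omega \mapsto (v_0 - v_1) \wedge \omega$, where $\omega \in \Lambda^* V_\alpha$ is viewed inside $\Lambda^* V_\beta$ via an identification that fixes every generator corresponding to an unchanged circle and sends the generator of the splitting circle to $v_0$ (the freedom between $v_0$ and $v_1$ is absorbed into the sign ambiguity). This identification also preserves $\varphi$, so it carries $\ker \varphi_\alpha$ into $\ker \varphi_\beta$ and hence $\Lambda^*_\circ V_\alpha$ into $\Lambda^*_\circ V_\beta$. Finally, since $\varphi_\beta(v_0 - v_1) = 1 - 1 = 0$, we have $v_0 - v_1 \in \ker \varphi_\beta \subset \Lambda^1_\circ V_\beta$, and because $\Lambda^*_\circ V_\beta$ is a subalgebra, wedging by $v_0 - v_1$ preserves it.

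I do not foresee a real obstacle; the proof is essentially the naturality of $\varphi$ under both kinds of edge maps combined with the observation that the split map already multiplies by an element of $\ker \varphi_\beta$. The only mild subtlety is the conventional choice involved in lifting $\omega$ from $\Lambda^* V_\alpha$ to $\Lambda^* V_\beta$ during a split, but different lifts differ by multiples of $v_0 - v_1 \in \ker \varphi_\beta$, which in any case leaves the containment in $\Lambda^*_\circ V_\beta$ unchanged.
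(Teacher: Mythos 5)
Your proof is correct and follows exactly the route the paper intends: reduce to a single edge map, then check the merge and split cases separately. The paper merely asserts this check is ``straightforward,'' whereas you supply the actual details --- that the merge quotient intertwines $\varphi_{\alpha}$ with $\varphi_{\beta}$ and that $v_0 - v_1 \in \ker\varphi_{\beta}$ so wedging preserves the subalgebra --- which is precisely the verification being elided.
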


The proof is straightforward.
Recall that an edge $e$ in the $\mathcal X$-cube of resolutions corresponds to a pair of resolutions, $\alpha_0,\alpha_1$, which differ at a single crossing $x$ such that $\alpha_0(x)=0$ and $\alpha_1(x)=1$. In the cube of $R$-modules, we have a map 
	\[F_e:\Lambda^*V_{\alpha_0}\too\Lambda^*V_{\alpha_1}.\]
To prove Proposition~\ref{prop-diffonred}, we must show for any such edge $e$, 
	\[F_e(\Lambda^*_\circ V_{\alpha_0})\subset\Lambda^*_\circ V_{\alpha_1}.\]
This is straightforward. 
Proposition~\ref{prop-diffonred} follows immediately. 

\begin{prop}
\label{prop-ccsplit} 
There is an isomorphism
\[C(D)\cong\overline C(D)\oplus\overline C(D).\]
\end{prop}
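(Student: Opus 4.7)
The plan is to exhibit the two summands explicitly using the basepoint-dependent formulation of the reduced complex. Fix a basepoint $p \in D$ away from the crossings; in each resolution $\alpha$ the point $p$ lies on a unique circle, whose associated generator we denote $v_p \in V_\alpha$. Set
\[
\overline C^{(p)}(D) := \bigoplus_\alpha \bigl(v_p \wedge \Lambda^* V_\alpha\bigr) \;\subset\; C(D).
\]
The proof breaks into three steps: (i) show $\overline C^{(p)}(D)$ is a subcomplex; (ii) show $C(D) = \overline C(D) \oplus \overline C^{(p)}(D)$ as a direct sum of subcomplexes; and (iii) produce a chain isomorphism $\overline C^{(p)}(D) \cong \overline C(D)$.

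For step (i), on a merge edge the circle through $p$ in the source is carried by the cobordism to the unique circle through $p$ in the target, so the quotient map $q$ satisfies $q(v_p \wedge \omega) = v_p \wedge q(\omega)$; on a split edge $d_e(v_p \wedge \omega) = \epsilon_e (v_{j_0} - v_{j_1}) \wedge v_p \wedge \omega$, which after anticommutation is visibly divisible by the target $v_p$. For step (ii), since $\varphi_\alpha(v_p) = 1$, the short exact sequence $0 \to \ker\varphi_\alpha \to V_\alpha \to R \to 0$ splits via $1 \mapsto v_p$, giving $V_\alpha = \ker\varphi_\alpha \oplus Rv_p$ and hence a vertexwise module decomposition
\[
\Lambda^* V_\alpha \;=\; \Lambda^*_\circ V_\alpha \;\oplus\; v_p \wedge \Lambda^*_\circ V_\alpha \;=\; \Lambda^*_\circ V_\alpha \;\oplus\; v_p \wedge \Lambda^* V_\alpha,
\]
using $v_p \wedge v_p = 0$ for the last equality. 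Combined with Proposition~\ref{prop-diffonred} and step (i), this assembles into a direct sum decomposition of chain complexes $C(D) = \overline C(D) \oplus \overline C^{(p)}(D)$.

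For step (iii), define $\Phi \colon \overline C(D) \to \overline C^{(p)}(D)$ by
\[
\Phi(\omega) := (-1)^{|\omega|}\, v_p \wedge \omega \qquad \text{for homogeneous } \omega \in \Lambda^*_\circ V_\alpha.
\]
At each vertex $\Phi$ is an $R$-module isomorphism, since wedging with $v_p$ sends $\Lambda^*_\circ V_\alpha$ bijectively onto $v_p \wedge \Lambda^*_\circ V_\alpha = v_p \wedge \Lambda^* V_\alpha$. On a merge edge, the quotient map preserves both wedge order and degree, so $\Phi$ commutes with $d_e$ automatically (since $|q(\omega)| = |\omega|$). On a split edge, the sign $(-1)^{|\omega|}$ is exactly what is needed to absorb the anticommutation $(v_{j_0} - v_{j_1}) \wedge v_p = -v_p \wedge (v_{j_0} - v_{j_1})$, and a direct calculation verifies $d_e \circ \Phi = \Phi \circ d_e$. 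Thus $\Phi$ is a chain isomorphism, and
\[
C(D) \;\cong\; \overline C(D) \oplus \overline C^{(p)}(D) \;\cong\; \overline C(D) \oplus \overline C(D).
\]

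The main obstacle I anticipate is the sign bookkeeping in step (iii), particularly in the subcases where $p$ lies on one of the circles being merged or split. There one must track the identification of $v_p$ between source and target along the edge cobordism, take account of the edge's sign $\epsilon_e$, and observe that the $v_p \wedge v_p = 0$ cancellations recombine with the sign twist to yield the same answer as in the generic case. This verification is mechanical but not entirely transparent and deserves a careful case analysis.
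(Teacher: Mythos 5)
Your argument is correct, but it takes a genuinely different route from the paper. The paper's splitting is actually established in Section~\ref{ssec-ropi} (Lemmas~\ref{lem-Phi1}--\ref{lem-Phi4} and Corollary~\ref{cor-ccexplicit}): one adjoins a disjoint unknot $O$, identifies $C(D)$ with $\overline C^{(p)}(D\sqcup O)$ for $p\in O$ via $\omega\mapsto v_0\wedge\omega$, moves the basepoint to $q\in D$ using the Ozsv\'ath--Rasmussen--Szab\'o basepoint-independence isomorphism (Proposition~\ref{prop-chipd}, quoted rather than proved), and then splits $v_q\wedge\Lambda^*V_\alpha'$ according to whether a term contains $v_0$. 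You instead stay entirely inside $C(D)$: the complement of $\overline C(D)$ is exhibited directly as the basepointed subcomplex $v_p\wedge C(D)$, the vertexwise decomposition comes from splitting $0\to\ker\varphi_\alpha\to V_\alpha\to R\to 0$ by $1\mapsto v_p$, and the identification $\overline C(D)\cong v_p\wedge C(D)$ is an explicit signed wedge --- in effect you reprove Proposition~\ref{prop-chipd} with a concrete formula rather than citing it, which is closer to the original ORSz argument. Your version is shorter and self-contained and makes the projections onto the two summands explicit; the paper's longer factorization $\Phi=\Phi_4\circ\Phi_3\circ\Phi_2\circ\Phi_1$ earns its keep later, since Proposition~\ref{prop-opiinrokh} traces $\tilde\psi(D)$ through exactly those four maps (using that $\phi_2$ has exterior degree $0$ and $\phi_4$ degree $-1$ on top-degree elements). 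On the one point you defer: the sign $(-1)^{|\omega|}$ is indeed necessary (without it $\Phi$ commutes with merges but anticommutes with splits), and the special cases do close up --- for a split of the circle through $p$ one gets $(v_{j_0}-v_{j_1})\wedge v_{j_0}=v_{j_0}\wedge v_{j_1}$ on one side and $(-1)\cdot v_{j_0}\wedge(v_{j_0}-v_{j_1})=v_{j_0}\wedge v_{j_1}$ on the other, and for a merge involving $p$'s circle the quotient map carries the source $v_p$ to the target $v_p$, so both computations reduce to the generic case.
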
	 

It is straightforward to show that for resolutions $\alpha_0$ and $\alpha_1$ connected by an edge, with corresponding map $F$ between them that
	\[F(\Lambda^*_\circ V_{\alpha_0})\subset\Lambda^*_\circ V_{\alpha_1}.\]
The proof breaks into two cases: one in which the map corresponds to a merge cobordism, and the other in which the map corresponds to a split. Hence, it follows that $\overline C(D)$ is a chain subcomplex of $C(D)$.

\begin{prop}[Ozsv\'ath-Rasmussen-Szab\'o, {\cite[Lemma 4.1]{ORSz}}]
\label{prop-chipd}
For any $p\in D$, there is an isomorphism 
	\[X_D^p: \overline C^{(p)}(D)\too \overline C(D).\]
\end{prop}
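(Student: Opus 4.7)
The plan is to construct $X_D^p$ one resolution at a time via a convenient change of basis, and then verify that the resulting collection of vertex maps intertwines all the edge maps of the cube. For a fixed resolution $\alpha$, replace the standard basis $\{v_1,\dots,v_n\}$ of $V_\alpha$ with $\{v_p\}\cup\{v_i-v_p\}_{i\ne p}$. Since $\ker\varphi_\alpha$ is spanned by the differences $v_i-v_p$ for $i\ne p$, this change of basis produces a direct sum decomposition
\[
\Lambda^* V_\alpha \;=\; \Lambda^*(\ker\varphi_\alpha)\;\oplus\;v_p\wedge\Lambda^*(\ker\varphi_\alpha),
\]
whose first summand is exactly $\overline C(D)_\alpha$. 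Because $v_p\wedge v_p=0$, we also have $v_p\wedge\Lambda^*V_\alpha=v_p\wedge\Lambda^*(\ker\varphi_\alpha)$, so the second summand is exactly $\overline C^{(p)}(D)_\alpha$. I would then define
\[
X_D^p(v_p\wedge\omega)\;=\;\omega,\qquad \omega\in\Lambda^*(\ker\varphi_\alpha),
\]
possibly multiplied by a uniform sign such as $(-1)^{|\omega|}$ to be fixed once the edge-map signs are tracked. This is manifestly a bijection on each vertex.

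To upgrade $X_D^p$ to an isomorphism of chain complexes, I would check that it commutes with every edge map $F_e:\Lambda^*V_{\alpha_0}\to\Lambda^*V_{\alpha_1}$ of the cube. The verification splits into four cases according as $F_e$ is a merge (a quotient by $v_{i_0}-v_{i_1}$) or a split ($\wedge(v_0-v_1)$), and whether or not the basepoint $p$ lies on a circle directly involved in the cobordism. When $p$ is not involved, $v_p$ is fixed by $F_e$ and one has $F_e(v_p\wedge\omega)=\pm\,v_p\wedge F_e(\omega)$, the sign arising from moving $v_p$ past $(v_0-v_1)$ in the split case. Combined with Proposition~\ref{prop-diffonred} (which guarantees $F_e(\omega)\in\overline C(D)_{\alpha_1}$), this gives the required intertwining. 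The merge case with $p$ on one of the merging circles is essentially identical since the quotient sends $v_p$ to $v_p$.

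The main difficulty is the split in which $p$ lies on the splitting circle, since then $v_p$ appears in the image in a form that is not obviously of the form $v_p\wedge(\text{element of }\overline C)$. Writing $a_p$ for the target circle still containing $p$ and $a_{0'}$ for the newly created one, the split map takes the form $\omega\mapsto(v_p-v_{0'})\wedge\omega$, so
\[
F_e(v_p\wedge\omega)\;=\;(v_p-v_{0'})\wedge v_p\wedge\omega\;=\;v_p\wedge v_{0'}\wedge\omega.
\]
The fix is to use the identity $v_{0'}=v_p-(v_p-v_{0'})$ to rewrite this as $-v_p\wedge(v_p-v_{0'})\wedge\omega=-v_p\wedge F_e(\omega)$; the residual sign is absorbed into the uniform sign chosen in the definition of $X_D^p$. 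With this bookkeeping complete, $X_D^p$ is a bijection on each vertex that commutes with all edge maps, hence an isomorphism of chain complexes $\overline C^{(p)}(D)\to\overline C(D)$.
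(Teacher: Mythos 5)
Your construction is correct, and it is essentially the standard argument — the paper itself gives no proof of this proposition, citing \cite[Lemma 4.1]{ORSz}, so your write-up supplies the missing details in the expected way: the change of basis $\{v_p\}\cup\{v_i-v_p\}_{i\neq p}$ splits $\Lambda^*V_\alpha$ as $\Lambda^*(\ker\varphi_\alpha)\oplus\bigl(v_p\wedge\Lambda^*(\ker\varphi_\alpha)\bigr)$, identifying the two summands with $\overline C(D)_\alpha$ and $\overline C^{(p)}(D)_\alpha$, and ``dividing by $v_p$'' is the desired vertex isomorphism. The only point to state more carefully is that the correcting sign cannot literally be uniform: the unsigned map commutes with every merge edge but anticommutes with every split edge (in both of your split cases the residual sign is $-1$), so the sign must depend on the exterior degree — your own example $(-1)^{|\omega|}$ works precisely because merges preserve and splits raise $|\omega|$ by one. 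Your handling of the delicate case where $p$ lies on the splitting circle, rewriting $v_p\wedge v_{0'}\wedge\omega=-v_p\wedge(v_p-v_{0'})\wedge\omega$ so that the output visibly lies in $v_p\wedge\Lambda^*(\ker\varphi_{\alpha_1})$, is exactly right.
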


\begin{defn}
\label{defn-rokh}
The reduced odd Khovanov homology $\overline{Kh'}(L)$ is defined to be the homology of $\overline C(D)$. 
\end{defn}

\begin{cor}[Ozsv\'ath-Rasmussen-Szab\'o, {\cite[Proposition 1.7]{ORSz}}]
\label{cor-okhsplit}
\[Kh'_{m,s}(L) = \overline{Kh'}_{m,s-1}(L)\oplus\overline{Kh'}_{m,s+1}(L).\]
\end{cor}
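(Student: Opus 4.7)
The plan is to promote the chain-level splitting of Proposition~\ref{prop-ccsplit} to homology and to track the quantum-grading shifts between the two resulting copies of $\overline{Kh'}(L)$.

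First I would make the two summands of Proposition~\ref{prop-ccsplit} explicit. Fix a basepoint $p\in D$ away from the crossings, so that in every resolution $\alpha$ the distinguished circle through $p$ gives a generator $v_p$ of $V_\alpha$. A direct check on each resolution yields the linear-algebra splitting
$$\Lambda^* V_\alpha \;=\; \Lambda^*_\circ V_\alpha \;\oplus\; v_p \wedge \Lambda^*_\circ V_\alpha,$$
which assembles across $\alpha$ into a decomposition of $C(D)$ realizing Proposition~\ref{prop-ccsplit}: the first summand is $\overline{C}(D)$ by definition, and the second is the basepoint-dependent reduced subcomplex $\overline{C}^{(p)}(D)$ of Section~\ref{ssec-redcc}, identified with $\overline{C}(D)$ via Proposition~\ref{prop-chipd}.

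Next I would compare quantum gradings. Wedging with $v_p$ raises the exterior degree $k$ by $1$, so the formula
$$Q = (\dim V_\alpha) - 2k + n_+ - 2n_- + |\alpha|$$
shows that for any $\omega \in \Lambda^*_\circ V_\alpha$ the element $v_p \wedge \omega$ has quantum grading exactly $2$ less than that of $\omega$. Hence the piece of $C(D)$ in quantum grading $s$ decomposes as the piece of $\overline{C}(D)$ at inherited grading $s$ together with the piece of $\overline{C}(D)$ at inherited grading $s+2$. Taking homology, which respects this decomposition because both summands are chain subcomplexes, and using the standard convention (as in~\cite{ORSz}) that the quantum grading on $\overline{Kh'}$ is shifted up by $1$ from the one inherited from $C(D)$, these two contributions become $\overline{Kh'}_{m,s-1}(L)$ and $\overline{Kh'}_{m,s+1}(L)$, respectively, which yields the stated isomorphism.

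The only real subtlety is fixing the grading-shift convention on $\overline{Kh'}(L)$ consistently with the cited definition; once this is done, the corollary is immediate from Proposition~\ref{prop-ccsplit} and the formula for $Q$ recorded above.
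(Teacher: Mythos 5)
Your argument is essentially the right one, and it is in fact more direct than what the paper does: the paper cites \cite[Proposition~1.7]{ORSz} and realizes the splitting of Proposition~\ref{prop-ccsplit} only implicitly, via the chain of isomorphisms $\Phi_4\circ\Phi_3\circ\Phi_2\circ\Phi_1$ through $C(D\sqcup O)$ (Lemmas~\ref{lem-Phi1}--\ref{lem-Phi4} and Corollary~\ref{cor-ccexplicit}), whereas you split each vertex directly as $\Lambda^*V_\alpha=\Lambda^*_\circ V_\alpha\oplus\bigl(v_p\wedge\Lambda^*_\circ V_\alpha\bigr)$ and observe that both summands are subcomplexes. That decomposition, the identification of the second summand with $\overline{C}^{(p)}(D)\cong\overline{C}(D)$, and the bookkeeping placing the two pieces at inherited quantum gradings $s$ and $s+2$ are all correct.

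The one point to fix is the direction of the grading shift. The convention actually in force (visible in the proof of Proposition~\ref{prop-opiinrokh}, where $\deg_Q$ on $\Lambda^k_\circ V_\alpha$ is computed with $\dim\ker\varphi_\alpha=\dim V_\alpha-1$ in place of $\dim V_\alpha$) is
\[
\overline{Q}\;=\;Q^{\mathrm{inh}}-1\qquad\text{on }\Lambda^*_\circ V_\alpha,
\]
i.e.\ the reduced quantum grading of an element is one \emph{less} than its grading inherited from $C(D)$. With that convention the summand at inherited grading $s$ contributes $\overline{Kh'}_{m,s-1}$ and the one at inherited grading $s+2$ contributes $\overline{Kh'}_{m,s+1}$, as you claim; equivalently, the isomorphism $X_D^p$ of Proposition~\ref{prop-chipd} has exterior degree $-1$ and hence raises $Q$ by one on the second summand. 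If the grading on $\overline{Kh'}$ were literally ``shifted up by $1$'' from the inherited one ($\overline{Q}=Q^{\mathrm{inh}}+1$), your two contributions would land in degrees $s+1$ and $s+3$ and the formula would come out wrong. Since you flagged the convention as the only subtle point, this is a matter of stating it with the correct sign rather than a gap in the argument.
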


In Section~\ref{ssec-ropi}, we define $\overline\psi(D)$ and prove it to be invariant, and we identify it with $\psi(L)$.

\subsection{The Reduced Odd Plamenevskaya Invariant}
\label{ssec-ropi}
\begin{defn}
\label{defn-rtpsi}
As in definition~\ref{defn-opi}, 
we let $\alpha'$ be the resolution in which the braid representation is separated into $b$ parallel bands. 
Again, this is the resolution in which there is a 0-smoothing for every positive crossing and a 1-smoothing for every negative crossing. 
The reduced odd Plamanevskaya invariant $\overline{\psi}(D)$ is a generator of $\Lambda^{b-1}_\circ V_{\alpha'}$. 
\end{defn}

As with the unreduced invariant, we will abuse notation to refer to both the element in the chain complex and its class in homology (shown to be well-defined in Proposition~\ref{prop-tpsicycle}) by $\overline\psi(D)$. 

\begin{prop}
\label{prop-tpsicycle}
$\overline{\psi}(D)$ is a cycle. 
\end{prop}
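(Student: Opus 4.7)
The plan is to adapt the two-case analysis from the proof of Proposition~\ref{prop-tpsic}. Since $\overline C(D)$ is a subcomplex of $C(D)$ by Proposition~\ref{prop-diffonred}, it suffices to show that the differential of $C(D)$ annihilates $\overline\psi(D)$. I would first fix the explicit representative
\[\overline\psi(D) = (v_1 - v_2) \wedge (v_1 - v_3) \wedge \cdots \wedge (v_1 - v_b),\]
which generates the rank-one module $\Lambda^{b-1}_\circ V_{\alpha'}$ because $\{v_1 - v_j\}_{j=2}^{b}$ is a basis for the rank-$(b-1)$ submodule $\ker\varphi_{\alpha'}$. Any other choice of generator differs from this one by a unit, so verifying the cycle condition for this representative is enough.

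As in Proposition~\ref{prop-tpsic}, the argument then splits into two cases. If every crossing of $D$ is negative, then $\alpha'$ is the all-ones resolution, the outgoing differential is the zero map, and $\overline\psi(D)$ is trivially a cycle. Otherwise, the differential out of $\Lambda^* V_{\alpha'}$ is a sum of merge maps, one per $0$-smoothing, and each such map is induced by the factorwise quotient $V_{\alpha'} \too V_{\alpha'}/(v_{i_0} - v_{i_1})$ described in the proof of Proposition~\ref{prop-tpsic}.

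For a merge identifying $v_{i_0}$ with $v_{i_1}$, two subcases arise. If $1 \in \{i_0, i_1\}$, say $i_0 = 1$ and $i_1 = j$, then the factor $v_1 - v_j$ is sent to $0$ and the entire wedge vanishes. If instead $\{i_0, i_1\} \subset \{2, \ldots, b\}$, then under the quotient the factors $v_1 - v_{i_0}$ and $v_1 - v_{i_1}$ become equal; the wedge product therefore acquires a repeated factor and vanishes. Summing over all merges gives $d(\overline\psi(D)) = 0$.

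I do not expect a substantive obstacle: the argument is a direct parallel of Proposition~\ref{prop-tpsic}, and the only real subtlety is choosing an explicit generator that makes the factorwise action of each merge map transparent. An alternative, less self-contained route would be to use Proposition~\ref{prop-chipd} with a basepoint $p$ on the strand carrying $v_1$, observe that $\psi(D) = v_1 \wedge v_2 \wedge \cdots \wedge v_b$ already lies in $\overline C^{(p)}(D) = v_1 \wedge C(D)$ and is a cycle of $C(D)$, and then transport the cycle property along $X_D^p$; this would require a concrete handle on $X_D^p$ that the direct approach above sidesteps.
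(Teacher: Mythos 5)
Your proof is correct and takes essentially the same approach as the paper: fix an explicit generator of $\Lambda^{b-1}_\circ V_{\alpha'}$ as a wedge of differences of the $v_i$ and check that every merge map out of $\alpha'$ annihilates it. The paper uses the basis $\{v_{i-1}-v_i\}$ and relabels so the merged circles are $a_1,a_2$ (so only your first subcase occurs), while your basis $\{v_1-v_j\}$ handles all merges with one fixed representative via the two subcases; both arguments are valid and the difference is cosmetic.
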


\begin{proof}
As in showing the unreduced invariant was a cycle, the differential out of $\Lambda^*_\circ V_{\alpha'}$ is 0 or a sum of merges. 
Without loss of generality, we take a labeling of $\alpha'$ so that the merged circles are $a_1$ and $a_2$, and we use 
	\[\{v_{i-1}-v_i\,|\,2\leq i\leq b\}\]
as the basis for $\ker\varphi_{\alpha'}$, generating a basis for $\Lambda^*_\circ V_{\alpha'}$.  Then
	\[{\overline\psi}(D) = \bigwedge_{i=2}^b (v_{i-1}-v_i).\]
Thus
	\begin{align*}
	F_M({\overline\psi}(D) ) 
		&=F_M\left((v_1-v_2)\wedge\bigwedge_{i=3}^b(v_{i-1}-v_i)\right)\\
		&=0\wedge\bigwedge_{i=3}^b(v_{i-1}-v_i)\\
		&=0.\qedhere
	\end{align*}
\end{proof}

Since ${\overline{\psi}}(D)$ is a cycle, it induces an element in the reduced homology $\overline{Kh'}(L)$.
To identify the invariant with the reduced version, we will follow it in the chain complexes explicitly though the identification of $C(D)$ and $\overline C(D)\oplus\overline C(D)$. 
We do this below in a series of lemmas. 
Our proof will use both $\overline C(D)$ and $\overline C(D\sqcup O)$ where $O$ is an additional unknot labeled $a_0$. 
As a braid, the diagram for $D\sqcup O$ is the diagram for $D$ with an additional strand that is not connected by any crossings. 
For the vector spaces from which the exterior algebras forming $C(D)$ are built, we will use the notation
	\[V_\alpha = \langle v_1,\dots,v_n\rangle.\]
Each resolution in $C(D)$ corresponds to a resolution in $C(D\sqcup O)$, which will be built out of vector spaces 
	\[V_\alpha' = \langle v_0,v_1\dots,v_n\rangle.\]
 We will often use the fact that if $V' = V \oplus\langle v_0\rangle$, then
	\[\Lambda^* V' = \langle\omega,v_0\wedge\omega\,|\,\omega\in\Lambda^*V\rangle.\]

In Proposition~\ref{prop-opiinrokh}, we focus on the invariant defined in the exterior algebra constructed from the resolution $\alpha'$. 

\begin{lem}
\label{lem-Phi1}
There is an isomorphism, $\Phi_1$ between $C(D)$ and $\overline C^{(p)}(D\sqcup O)$, where we take $p\in O$.
\end{lem}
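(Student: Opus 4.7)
The plan is to construct $\Phi_1$ very explicitly by right-wedging with the generator associated to the basepoint circle. Since $D\sqcup O$ has exactly the same set of crossings as $D$, every resolution $\alpha:\mathcal{X}\to\{0,1\}$ of $D$ extends canonically to a resolution of $D\sqcup O$ whose resolution diagram is obtained by adjoining the circle $O$. If $V_\alpha=\langle v_1,\dots,v_n\rangle$ is the vector space for $\alpha$, then the vector space at the corresponding resolution of $D\sqcup O$ is
\[V'_\alpha = V_\alpha\oplus\langle v_p\rangle,\]
where $v_p$ is the generator associated to $O$ (which contains $p$). Hence, as graded modules,
\[\Lambda^* V'_\alpha \;\cong\; \Lambda^* V_\alpha \;\oplus\; \bigl(\Lambda^* V_\alpha\bigr)\wedge v_p,\]
and, by definition, the $\alpha$-summand of $\overline C^{(p)}(D\sqcup O)$ is exactly the second factor.

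Next I would define $\Phi_1$ on each resolution summand by
\[\Phi_1(\omega) \;=\; \omega\wedge v_p,\]
which is evidently a bijection onto $(\Lambda^* V_\alpha)\wedge v_p$. Since this holds in every resolution, summing over the cube gives a module isomorphism $\Phi_1: C(D)\to \overline C^{(p)}(D\sqcup O)$ that respects both the homological and quantum gradings (up to the shift built into the reduced complex; one also checks that appending $v_p$ only increases $\dim V_\alpha$ by one at every vertex, so the grading shift is uniform and can be absorbed).

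The remaining step is to verify that $\Phi_1$ is a chain map. Because $D$ and $D\sqcup O$ share the same cube of resolutions and the same set of edges, I would choose the sign assignment on $C(D\sqcup O)$ to be the pullback of that on $C(D)$; it remains only to compare the elementary edge maps. On a merge edge identifying $v_i\sim v_j$ (both circles belonging to $D$), the induced quotient in $C(D\sqcup O)$ fixes the $v_p$ factor, so
\[\Phi_1\bigl(q(\omega)\bigr)\;=\;q(\omega)\wedge v_p \;=\; q'(\omega\wedge v_p),\]
where $q'$ is the corresponding edge map in $C(D\sqcup O)$. On a split edge acting by $\omega\mapsto (v_i-v_j)\wedge\omega$,
\[\Phi_1\bigl((v_i-v_j)\wedge\omega\bigr)\;=\;(v_i-v_j)\wedge\omega\wedge v_p\;=\;(v_i-v_j)\wedge\Phi_1(\omega),\]
which is the corresponding split map in $C(D\sqcup O)$ applied to $\Phi_1(\omega)$. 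Writing $v_p$ on the right (rather than the left) is what makes this computation sign-free.

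The only real obstacle is bookkeeping the signs: one must ensure that wedging with $v_p$ on the right never collides with the sign assignment or with the wedge factor $(v_i-v_j)$ appearing in a split. Placing $v_p$ as the last factor throughout makes it commute past every map, so the diagrams of edge maps commute strictly, and $\Phi_1$ is a chain isomorphism as required.
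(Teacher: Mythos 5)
Your proposal is correct and follows essentially the same route as the paper: both define $\Phi_1$ vertex-by-vertex as wedging with the generator of the disjoint circle $O$, transport the sign assignment from $C(D)$ to $C(D\sqcup O)$ (possible since the two cubes have identical face types), and observe that this generator is untouched by every merge and split, so the edge maps commute. The only cosmetic difference is that you wedge $v_p$ on the right where the paper wedges $v_0$ on the left; your convention in fact handles the sign on split edges more transparently, since $(v_i-v_j)\wedge\omega\wedge v_p$ requires no reordering of factors.
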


\begin{proof}
We take a sign assignment on the resolution cube for $C(D)$ and choose the same sign assignment on $C(D\sqcup O)$. This is possible because the squares of each have the same commutativity types. This induces the differential on $\overline C^{(p)}(D\sqcup O)$. 
 On the level of the exterior algebras corresponding to each resolution, this comes from a map
	\[\phi_1:\Lambda^*V_\alpha\too v_0\wedge \Lambda^*V_\alpha'.\]
Since $\Lambda^*V_\alpha' = \langle\omega,v_0\wedge\omega\,|\,\omega\in\Lambda^*V_\alpha\rangle$, then
	\[v_0\wedge\Lambda^*V_\alpha'=\langle v_0\wedge\omega, v_0\wedge\omega\,|\,\omega\in\Lambda^*V_\alpha\rangle= \langle v_0\wedge\omega\,|\,\omega\in\Lambda^*V_\alpha\rangle.\]
Thus, $\phi_1$, for $\omega\in\Lambda^*V_\alpha$, is given by
	\[\omega\mapsto v_0\wedge\omega.\]
This extends to all resolutions as a map $\Phi_1$. 

Since $v_0$ is unmodified by the differentials as it corresponds to a circle $O$, which is unaffected by any merges or splits in the corresponding cobordisms, then
	\[d(v_0\wedge\omega)=v_0\wedge d\omega.\]
Thus, $\Phi_1$ is a chain map. 
\end{proof}

\begin{lem}
\label{lem-Phi2}
 There is an isomorphism $\Phi_2$ between $\overline C^{(p)}(D\sqcup O)$ and $\overline C^{(q)}(D\sqcup O)$ where $p$ is a point in $O$ and $q$ is a point in $D$.
\end{lem}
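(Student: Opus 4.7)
The plan is to obtain $\Phi_2$ as a composition of two instances of Proposition~\ref{prop-chipd} applied to the diagram $D\sqcup O$. Applying that proposition with the two basepoints $p\in O\subset D\sqcup O$ and $q\in D\subset D\sqcup O$ produces chain isomorphisms
\[X^p_{D\sqcup O}:\overline C^{(p)}(D\sqcup O)\too\overline C(D\sqcup O)\qquad\text{and}\qquad X^q_{D\sqcup O}:\overline C^{(q)}(D\sqcup O)\too\overline C(D\sqcup O),\]
and I simply set $\Phi_2:=(X^q_{D\sqcup O})^{-1}\circ X^p_{D\sqcup O}$. This is a chain isomorphism as a composition of chain isomorphisms, which is all the statement requires.

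Because $\psi(D)$ will need to be followed through $\Phi_2$ in the chain of identifications leading up to Proposition~\ref{prop-opiinrokh}, I would also record the resolution-by-resolution formula. Fix a resolution $\alpha$ of $D\sqcup O$ with associated vector space $V'_\alpha$; let $v_0$ denote the generator for the circle containing $p$ (which is always $O$) and let $v_q$ denote the generator for the circle of $D$ that contains $q$ in this resolution. Wedging with $v_0$ identifies $\Lambda^*_\circ V'_\alpha$ with $v_0\wedge\Lambda^*V'_\alpha$, since the basis $\{v_i-v_0:i\neq 0\}$ of $\ker\varphi_\alpha$ wedged with $v_0$ is the standard basis of $v_0\wedge\Lambda^*\langle v_i:i\neq 0\rangle=v_0\wedge\Lambda^*V'_\alpha$; the analogous statement holds for $v_q$. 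Hence every element of $\overline C^{(p)}(D\sqcup O)$ has a unique expression $v_0\wedge\tilde\omega$ with $\tilde\omega\in\Lambda^*_\circ V'_\alpha$, and $\Phi_2$ is given resolution-wise by $v_0\wedge\tilde\omega\mapsto v_q\wedge\tilde\omega$.

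I do not anticipate a real obstacle here. The content of the lemma is entirely absorbed into Proposition~\ref{prop-chipd}, which was already imported from~\cite{ORSz}. The only mild care is to use a single sign assignment on the cube of $D\sqcup O$ throughout, so that both $X^p_{D\sqcup O}$ and $X^q_{D\sqcup O}$ are chain maps with respect to the same induced differential on $\overline C(D\sqcup O)$; once this is fixed, the chain map property of $\Phi_2$ is automatic by functoriality.
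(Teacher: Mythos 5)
Your proof is correct and is exactly the paper's argument: the paper also defines $\Phi_2 = (X^q_{D\sqcup O})^{-1}\circ X^p_{D\sqcup O}$ as a direct consequence of Proposition~\ref{prop-chipd}. The extra resolution-wise formula you record is not in the paper's proof of this lemma but is consistent with how the map is later used in Proposition~\ref{prop-opiinrokh}.
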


\begin{proof}
This is a consequence of Proposition~\ref{prop-chipd}. 
We have
	\[\Phi_2 = (X^q_{D\sqcup O})^{-1}\circ X^p_{D\sqcup O}.\qedhere\]
\end{proof}

\begin{lem}
\label{lem-Phi3}
There is an isomorphism, $\Phi_3$ between $\overline C^{(q)}(D\sqcup O)$ and $\overline C^{(q)}(D)\oplus\overline C^{(q)}(D)$.
\end{lem}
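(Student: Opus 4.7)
The plan is to define $\Phi_3$ resolution-by-resolution using the observation that the extra unknot $O$ is disjoint from $D$, so its associated generator $v_0$ is an inert passenger under every edge map of the $\mathcal X$-cube of $D\sqcup O$. Concretely, for each resolution $\alpha$ of $D$ with vector space $V_\alpha=\langle v_1,\dots,v_n\rangle$, the corresponding resolution of $D\sqcup O$ has $V_\alpha'=V_\alpha\oplus\langle v_0\rangle$, and the exterior algebra splits as
\[
\Lambda^*V_\alpha'=\Lambda^*V_\alpha \oplus v_0\wedge\Lambda^*V_\alpha.
\]
Wedging with $v_q$ (where $q\in D$, so $v_q\in V_\alpha$) yields
\[
v_q\wedge\Lambda^*V_\alpha' \;=\; \bigl(v_q\wedge\Lambda^*V_\alpha\bigr)\oplus\bigl(v_q\wedge v_0\wedge\Lambda^*V_\alpha\bigr).
\]

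I would then define $\Phi_3$ as the direct sum of the identity on the first summand and the isomorphism $v_q\wedge v_0\wedge\omega\mapsto v_q\wedge\omega$ on the second, landing in $\overline{C}^{(q)}(D)\oplus\overline{C}^{(q)}(D)$. Summing over resolutions gives a graded isomorphism of the underlying modules, so the remaining task is to verify that $\Phi_3$ commutes with the differentials.

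For each edge $e$ of the cube, the cobordism between the two resolutions involves only crossings of $D$ (since $O$ has no crossings), so the induced map $F_e$ on $\Lambda^*V_\alpha'$ is either a quotient by some $v_{i_0}-v_{i_1}$ with $i_0,i_1\neq 0$ (merge), or wedging with some $\epsilon_e(v_{i_0}-v_{i_1})$ with $i_0,i_1\neq 0$ (split). In both cases the decomposition $\Lambda^*V_\alpha\oplus v_0\wedge\Lambda^*V_\alpha$ is preserved. For a merge, the map restricted to either summand is manifestly identified with the corresponding edge map of $C(D)$. For a split, one checks that moving $v_q\wedge v_0$ past $(v_{i_0}-v_{i_1})$ introduces two sign changes which cancel: $(v_{i_0}-v_{i_1})\wedge v_q\wedge v_0\wedge\omega = v_q\wedge v_0\wedge(v_{i_0}-v_{i_1})\wedge\omega$. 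Hence under the identification $v_q\wedge v_0\wedge\omega\leftrightarrow v_q\wedge\omega$, the differential on the second summand matches exactly the differential on $\overline{C}^{(q)}(D)$, with no extra signs.

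The main obstacle I anticipate is bookkeeping the signs from the sign assignment and from permuting $v_0$ past the other generators; the crucial point is to choose the same sign assignment on the $\mathcal X$-cubes of $D$ and $D\sqcup O$, which is legitimate because these cubes have identical edges and commutativity types. Once this alignment is made, the sign check reduces to the two-fold transposition computed above, and the chain-isomorphism property of $\Phi_3$ follows.
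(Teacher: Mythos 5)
Your construction is the same as the paper's: decompose $v_q\wedge\Lambda^*V_\alpha'$ into the summand not containing $v_0$ and the summand containing $v_0$, send the first identically to the first copy of $\overline C^{(q)}(D)$, and strip $v_0$ from the second. The one place where you go beyond the paper (which simply asserts that $\Phi_3$ is a chain map) is the explicit sign check on split edges, and that check contains an error. Your displayed identity $(v_{i_0}-v_{i_1})\wedge v_q\wedge v_0\wedge\omega = v_q\wedge v_0\wedge(v_{i_0}-v_{i_1})\wedge\omega$ is correct, so after stripping $v_0$ the image of the differential is $v_q\wedge(v_{i_0}-v_{i_1})\wedge\omega$. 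But the split differential applied to the stripped element $v_q\wedge\omega$ in $\overline C^{(q)}(D)$ is $(v_{i_0}-v_{i_1})\wedge v_q\wedge\omega=-\,v_q\wedge(v_{i_0}-v_{i_1})\wedge\omega$: on that side you only move $(v_{i_0}-v_{i_1})$ past the single factor $v_q$, which costs one sign, not two. So with the same sign assignment on both cubes, the map you define commutes with merge edges but \emph{anti}commutes with split edges on the $v_0$-summand; it is not a chain map as written, and the discrepancy cannot be absorbed by flipping the sign assignment on all split edges (that would spoil skew-commutativity of the mixed faces).

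The fix is small and does not change your strategy: place $v_0$ on the right rather than the left, i.e.\ identify the second summand as $(v_q\wedge\Lambda^*V_\alpha)\wedge v_0$ and define the map by $\omega\wedge v_0\mapsto\omega$ (equivalently, insert the sign $(-1)^k$ on the degree-$k$ part before stripping a left-hand $v_0$). Left-wedging by $(v_{i_0}-v_{i_1})$ commutes with right-wedging by $v_0$, and merges are algebra quotients, so this version commutes with every edge map and the rest of your argument goes through verbatim. (The paper elides this point entirely---its Lemmas on $\Phi_1$ and $\Phi_3$ assert the chain-map property with $v_0$ wedged on the left---and since the invariant is only claimed up to sign the conclusion is unaffected; but if you carry out the verification you should do it with the corrected convention.)
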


\begin{proof}
From the exterior algebras, we define
	\[\phi_3:v_1\wedge\Lambda^*V_\alpha'\too (v_1\wedge\Lambda^*V_\alpha)\oplus (v_1\wedge\Lambda^*V_\alpha),\]
by observing
	\[v_1\wedge\Lambda^*V_\alpha' 
		= \langle v_1\wedge\eta,v_0\wedge v_1\wedge\eta\,|\,\eta\in\Lambda^*\hat V\rangle 
		= \langle \omega,v_0\wedge\omega\,|\,\omega\in v_1\wedge \Lambda^*V_\alpha\rangle,\]
so the map is given by
	\begin{align*}
		\omega\mapsto (\omega,0)
			&\qquad \omega\in v_1\wedge\Lambda^*V_\alpha\subset v_1\wedge\Lambda^*V_\alpha'\\
		v_0\wedge\omega\mapsto(0,\omega)
			&\qquad\omega\in v_1\wedge\Lambda^*V_\alpha\subset v_1\wedge\Lambda^*V_\alpha'.
	\end{align*}
It is clear that $\Phi_3$ is a chain map.
\end{proof}

\begin{lem}
\label{lem-Phi4}
There is an isomorphism $\Phi_4$ between $\overline C^{(q)}(D)\oplus\overline C^{(q)}(D)$ and $\overline C(D)\oplus \overline C(D)$.
\end{lem}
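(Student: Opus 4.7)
The plan is to obtain $\Phi_4$ simply by applying Proposition~\ref{prop-chipd} to each summand. That proposition already produces an isomorphism of chain complexes $X_D^q:\overline C^{(q)}(D)\too \overline C(D)$, so I would define
\[\Phi_4 := X_D^q \oplus X_D^q : \overline C^{(q)}(D)\oplus\overline C^{(q)}(D) \too \overline C(D)\oplus \overline C(D).\]
Since a direct sum of isomorphisms of chain complexes is an isomorphism of chain complexes, this immediately gives what is claimed, provided we interpret the direct sum structure on both sides compatibly (i.e., summand by summand).

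The only thing worth checking is that the direct sum decomposition on the source is respected: the two copies of $\overline C^{(q)}(D)$ on the left came from the splitting $v_1\wedge\Lambda^*V_\alpha'\cong (v_1\wedge\Lambda^*V_\alpha)\oplus(v_0\wedge v_1\wedge\Lambda^*V_\alpha)$ produced in Lemma~\ref{lem-Phi3}, and in both summands the variable controlling the ``reduced'' subspace is still the generator $v_1$ corresponding to the basepoint $q\in D$. Hence each summand is literally a copy of $\overline C^{(q)}(D)$ with the same basepoint, and $X_D^q$ may be applied to each without any additional identifications. I therefore do not expect any serious obstacle in this lemma; the real work was carried out in Proposition~\ref{prop-chipd}, and Lemma~\ref{lem-Phi4} just packages two applications of it. The only mild subtlety is book-keeping the grading shift so that the composition $\Phi_4\circ\Phi_3\circ\Phi_2\circ\Phi_1$ realizes the splitting of Corollary~\ref{cor-okhsplit} correctly, but this will be transparent from the explicit formulas already established in Lemmas~\ref{lem-Phi1}--\ref{lem-Phi3}.
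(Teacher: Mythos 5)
Your proposal matches the paper's proof exactly: the paper also defines $\Phi_4 := X^q_D\oplus X^q_D$ as a direct consequence of Proposition~\ref{prop-chipd}. The extra remarks about compatibility of the summands are fine but not needed beyond what the paper records.
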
	

\begin{proof}
This is a consequence of Proposition~\ref{prop-chipd}. 
We have
	\[\Phi_4 := X^q_D\oplus X^q_D.\qedhere\]
\end{proof}

\begin{cor}
\label{cor-ccexplicit}
There is an isomorphism, $\Phi: C(D)\too\overline C(D)\oplus\overline C(D)$, given by
	\[\Phi := \Phi_4\circ\Phi_3\circ\Phi_2\circ\Phi_1.\]
\end{cor}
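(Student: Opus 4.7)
The proof is essentially a formal composition, since every ingredient has already been established in Lemmas~\ref{lem-Phi1}--\ref{lem-Phi4}. The plan is to verify that the four chain isomorphisms fit together head-to-tail and then invoke the fact that a composition of chain isomorphisms is itself a chain isomorphism.

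First, I would track the domains and codomains of each $\Phi_i$ to confirm that the composition makes sense. Choosing a basepoint $p\in O$ and a basepoint $q\in D$, Lemma~\ref{lem-Phi1} gives $\Phi_1 : C(D) \too \overline C^{(p)}(D \sqcup O)$, Lemma~\ref{lem-Phi2} gives $\Phi_2 : \overline C^{(p)}(D\sqcup O) \too \overline C^{(q)}(D \sqcup O)$ by conjugating the two instances of the isomorphism from Proposition~\ref{prop-chipd}, Lemma~\ref{lem-Phi3} gives $\Phi_3 : \overline C^{(q)}(D\sqcup O) \too \overline C^{(q)}(D) \oplus \overline C^{(q)}(D)$, and Lemma~\ref{lem-Phi4} gives $\Phi_4 : \overline C^{(q)}(D)\oplus \overline C^{(q)}(D) \too \overline C(D) \oplus \overline C(D)$, again via Proposition~\ref{prop-chipd} applied summand-wise. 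The codomain of each map is the domain of the next, so $\Phi := \Phi_4 \circ \Phi_3 \circ \Phi_2 \circ \Phi_1$ is a well-defined map $C(D) \too \overline C(D) \oplus \overline C(D)$.

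Each of the four factors was shown in its respective lemma to be a chain map and a bijection on each vector space in the resolution cube (the constructions are explicit and have explicit inverses: multiplication by $v_0$ is inverted by contraction, $X_D^q$ is an isomorphism by Proposition~\ref{prop-chipd}, and the splitting in $\Phi_3$ comes from the direct sum decomposition $\Lambda^*V_\alpha' = \Lambda^*V_\alpha \oplus v_0 \wedge \Lambda^*V_\alpha$). Therefore $\Phi$ is a chain isomorphism.

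Since nothing further needs to be verified, there is no real obstacle; the only thing one might worry about is that the sign assignments used in Lemma~\ref{lem-Phi1} on $C(D)$ and on $C(D \sqcup O)$ are compatible, but this is exactly what is arranged in the proof of that lemma by choosing the same sign assignment on both cubes. With that compatibility in place, the corollary is immediate.
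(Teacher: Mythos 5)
Your proposal is correct and takes essentially the same approach as the paper, which treats the corollary as an immediate consequence of Lemmas~\ref{lem-Phi1}--\ref{lem-Phi4}: the four isomorphisms compose head-to-tail and a composition of chain isomorphisms is a chain isomorphism. Nothing further is needed.
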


Note, on the level of the exterior algebras, this corresponds to a map
	\[\phi:\Lambda^* V_\alpha\too \Lambda^*_\circ V_\alpha\oplus\Lambda^*_\circ V_\alpha\]
given by
	\[\phi = (\phi_4\oplus\phi_4)\circ \phi_3\circ\phi_2\circ\phi_1.\]

\begin{prop} 
\label{prop-opiinrokh}
The inclusion map $\overline{Kh'}(L)\lhook\joinrel\longrightarrow Kh'(L)$ sends the reduced odd Plamenevskaya invariant $\overline{\psi}(D)\in \overline{Kh'}_{0,sl(L)+1}(L)$ to the odd Plame\-nev\-skaya invariant $\pm\psi(D)\in Kh'_{0,sl(L)}$. 
\end{prop}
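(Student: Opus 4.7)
The strategy is to trace $\psi(D)$ at the chain level through the explicit isomorphism $\Phi = \Phi_4\circ\Phi_3\circ\Phi_2\circ\Phi_1\colon C(D)\too\overline{C}(D)\oplus\overline{C}(D)$ assembled in Corollary~\ref{cor-ccexplicit} and show that
\[\Phi(\psi(D)) = \pm(0,\overline\psi(D)).\]
By comparison with the grading decomposition of Corollary~\ref{cor-okhsplit}, the inclusion $\overline{Kh'}(L)\hookrightarrow Kh'(L)$ in the statement is precisely the inclusion of the second summand of $\Phi^{-1}$, so this chain-level identity descends to the claimed identification of $\overline\psi(D)$ with $\pm\psi(D)$ in homology.

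The computation takes place entirely on the resolution $\alpha'$, and at each stage the relevant subspace is one-dimensional, which forces each map to be multiplication by $\pm1$. Applying $\Phi_1$ to $\psi(D) = v_1\wedge\cdots\wedge v_b$ gives $v_0\wedge v_1\wedge\cdots\wedge v_b$, the unique generator (up to sign) of the top wedge degree of $\overline{C}^{(p)}(D\sqcup O)$ at $\alpha'$. Both $\overline{C}^{(p)}(D\sqcup O)$ and $\overline{C}^{(q)}(D\sqcup O)$ coincide with the one-dimensional $\Lambda^{b+1}V'_{\alpha'}$ in their top wedge degree, so $\Phi_2$ acts here as $\pm\mathrm{id}$. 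Writing the result as $v_0\wedge\omega$ for $\omega = v_1\wedge\cdots\wedge v_b\in v_1\wedge\Lambda^*V_{\alpha'}$, the formula from Lemma~\ref{lem-Phi3} yields $\pm(0,v_1\wedge\cdots\wedge v_b)$. Finally, the top wedge degrees of $\overline{C}^{(q)}(D)$ and $\overline{C}(D) = \Lambda^*_\circ V_{\alpha'}$ are both one-dimensional, spanned respectively by $v_1\wedge\cdots\wedge v_b$ and $\overline\psi(D) = \bigwedge_{i=2}^b(v_{i-1}-v_i)$, so $\Phi_4 = X_D^q\oplus X_D^q$ sends $(0,v_1\wedge\cdots\wedge v_b)$ to $\pm(0,\overline\psi(D))$.

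The main obstacle is simply keeping track of signs. The maps $X_D^p$ and $X_{D\sqcup O}^p$ from Proposition~\ref{prop-chipd} are defined only implicitly, so writing out their action on a general element would be unwieldy; fortunately, the observation that every relevant piece at the resolution $\alpha'$ is one-dimensional lets me avoid unpacking them and pin each $\Phi_i$ down on $\psi(D)$ up to a sign using nothing more than grading preservation and a dimension count. Since Theorem~\ref{thm-inv} only determines $\psi(D)$ (and thus $\overline\psi(D)$) up to an overall sign anyway, this ambiguity is exactly what the statement permits. As a final sanity check, comparing the bidegrees of $\overline\psi(D)\in\overline{Kh'}_{0,sl(L)+1}(L)$ and $\psi(D)\in Kh'_{0,sl(L)}(L)$ against the decomposition $Kh'_{m,s}(L) = \overline{Kh'}_{m,s-1}(L)\oplus\overline{Kh'}_{m,s+1}(L)$ confirms that the second summand is the correct one.
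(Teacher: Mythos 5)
Your proposal is correct and follows essentially the same route as the paper: both trace $\psi(D)$ at the chain level through $\Phi=\Phi_4\circ\Phi_3\circ\Phi_2\circ\Phi_1$ and use the fact that the relevant top wedge-degree pieces at $\alpha'$ are one-dimensional (together with degree preservation) to pin each map down to $\pm1$, concluding $\Phi(\psi(D))=(0,\pm\overline\psi(D))$ and checking the quantum grading against the splitting of Corollary~\ref{cor-okhsplit}.
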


\begin{proof}
	Let $\alpha'$ be the resolution corresponding to the invariant, and let $V_{\alpha'}'$ be the vector space corresponding to the same resolution in $D\sqcup O$, labeled as before. 
As given in Proposition~\ref{prop-chipd}, 
we define 
	\[\phi_2 =(\chi^q_{D\sqcup O})^{-1}\circ\chi^p_{D\sqcup O}:
		v_0\wedge\Lambda^*V_{\alpha'}'\too v_i\wedge\Lambda^*V_{\alpha'}',\]
and 
	\[\phi_4=\chi^q_D:v_0\wedge\Lambda^*V_{\alpha'}:\Lambda^*_\circ V_{\alpha'}.\]
Then,
	\[\phi(\tilde\psi(D)) =  (\phi_4\oplus\phi_4)\circ \phi_3\circ\phi_2\circ\phi_1(\tilde\psi(D)).\]
We note that since $\phi_2$ is an isomorphism of $R$-modules that is degree 0 with respect to the natural grading of the exterior algebra, and $v_0\wedge\tilde\psi(D)$ has top degree in the exterior algebra, it follows that 
	\[\phi_2(v_0\wedge\tilde\psi(D))=\pm v_0\wedge\tilde\psi(D).\]
Note, $\phi_4=\chi^q_D$ is an isomorphism of $R$-modules that is degree $-1$ with respect to the exterior algebra. 
Furthermore, $\tilde\psi(D)\in v_i\wedge\Lambda^*V_{\alpha'}$ has top degree in the exterior algebra, $\deg_k \tilde\psi(D) = b$, and $\tilde{\overline\psi}(D)$ has top degree in $\Lambda^*_\circ V_{\alpha'}$, $\deg_k \tilde{\overline\psi}(D) = b-1$. 
Thus, it follows that
	\[\phi_4(\tilde\psi(D))=\pm\tilde{\overline\psi}(D).\]

Thus, we have
	\[\begin{array}{rccc}
		\phi_1: &\tilde\phi(D)&\mapsto&v_0\wedge\tilde\psi(D)\\
		\phi_2:&v_0\wedge\tilde\psi(D)&\mapsto&\pm v_0\wedge\tilde\psi(D)\\
		\phi_3:&\pm v_0\wedge\tilde\psi(D)&\mapsto&(0,\tilde\psi(D))\\
		\phi_4:&\tilde\psi(D)&\mapsto&\tilde{\overline\psi}(D).
	\end{array}\]
	
Therefore,
		\begin{align*}
		\phi(\tilde\psi(D)) 
			&=(\phi_4\oplus\phi_4)\circ \phi_3\circ\phi_2\circ\phi_1(\tilde\psi(D))\\
			&=(\phi_4\oplus\phi_4)(0,\tilde\psi(D))\\
			&=(0,\pm\tilde{\overline\psi}(D)),
		\end{align*}
and thus on the chain complex, we have 
$\Phi(\tilde\psi(D))=(0,\pm\tilde{\overline{\psi}}(D))$.

Note that $\tilde\psi(D)\in\Lambda^b V_{\alpha'}$ and $\Phi(\tilde\psi(D))\in\Lambda^{b-1}(\ker\varphi_{\alpha'})$. 
Thus
	\begin{align*}
	\deg_Q \Phi(\tilde\psi(D)) 
		&= (\dim \ker\varphi_{\alpha'}) - 2(b-1) + n_+ - 2n_- + |\alpha'|\\
		&= (\dim V_{\alpha'} - 1) - 2(b-1) + n_+ - 2n_- + |\alpha'|\\
		&=(\dim V_{\alpha'}) - 2b + n_+ - 2n_- + |\alpha'| + 1\\
		&=\deg_Q \tilde\psi(D) + 1\\
		&=sl(L) + 1.
	\end{align*}
Therefore, $\overline \psi(D)\in\overline{Kh'}_{0,sl(L)+1}(D)$.
\end{proof}

\begin{cor}
\label{cor-rpsitli}
$\overline{\psi}(D)$ is a transverse link invariant. More precisely, if $D$ and $D'$ are diagrams for a transverse link $L$, there is a quasi-isomorphism
	\[\overline\rho:\overline C(D)\too \overline C(D')\]
such that
	\[\overline\rho(\overline\psi(D)) = \pm\overline\psi(D').\]
\end{cor}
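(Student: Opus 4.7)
The plan is to combine the unreduced invariance from Theorem~\ref{thm-inv} with the identification $\Phi(\psi(D)) = (0, \pm\overline\psi(D))$ provided by Proposition~\ref{prop-opiinrokh}, using the bigraded splitting $C(D) \cong \overline C(D) \oplus \overline C(D)$ of Proposition~\ref{prop-ccsplit} to descend from the unreduced quasi-isomorphism to a reduced one. By Theorem~\ref{thm-tmt} it suffices to treat the case where $D$ and $D'$ differ by a single transverse Reidemeister move (braid conjugation being trivial), and Propositions~\ref{prop-r1}--\ref{prop-r3} then supply a quasi-isomorphism (or, for R3, a zig-zag through a common intermediate complex) $\rho$ with $\rho(\psi(D)) = \pm\psi(D')$.

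To transfer $\rho$ to the reduced complexes, I would conjugate by the isomorphisms $\Phi : C(D) \to \overline C(D) \oplus \overline C(D)$ and $\Phi' : C(D') \to \overline C(D') \oplus \overline C(D')$ from Corollary~\ref{cor-ccexplicit}, obtaining a quasi-isomorphism $\widetilde\rho := \Phi' \circ \rho \circ \Phi^{-1}$. By Corollary~\ref{cor-okhsplit}, the two summands in each splitting sit at quantum gradings shifted by $-1$ and $+1$ respectively relative to the total quantum grading on $C$. Since $\rho$, $\Phi$, and $\Phi'$ all respect the bigrading, $\rho$ cannot mix the two quantum-shifted summands; hence $\widetilde\rho$ decomposes as a direct sum $\overline\rho_- \oplus \overline\rho_+$, with each $\overline\rho_\pm : \overline C(D) \to \overline C(D')$ a quasi-isomorphism in its own right.

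Proposition~\ref{prop-opiinrokh} places $\psi(D)$ in the $+1$-shifted summand as $(0, \pm\overline\psi(D))$, and analogously $\Phi'(\psi(D')) = (0, \pm\overline\psi(D'))$. The relation $\widetilde\rho(0, \pm\overline\psi(D)) = (0, \pm\overline\psi(D'))$ then reads $\overline\rho_+(\overline\psi(D)) = \pm\overline\psi(D')$ in the $+1$ summand. Setting $\overline\rho := \overline\rho_+$ completes the argument.

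The main obstacle is bookkeeping: verifying that the direct-sum splitting of $\widetilde\rho$ is genuine, which reduces to checking that $\rho$, $\Phi$, and $\Phi'$ are all strictly compatible with the bigrading as claimed. For R3, which produces only a zig-zag through an intermediate complex, the same splitting analysis applies at each stage, giving a zig-zag of reduced quasi-isomorphisms that is equivalent in the derived category to a single $\overline\rho : \overline C(D) \to \overline C(D')$ of the stated form.
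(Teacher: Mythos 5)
Your proposal is correct and follows essentially the same route as the paper: take the unreduced quasi-isomorphism $\rho$ from Theorem~\ref{thm-inv}, conjugate by the isomorphisms $\Phi$ and $\Phi'$ of Corollary~\ref{cor-ccexplicit}, and read off the image of $(0,\pm\overline\psi(D))$ via Proposition~\ref{prop-opiinrokh}. The one place you go beyond the paper is the claim that the bigrading alone forces $\Phi'\circ\rho\circ\Phi^{-1}$ to split as $\overline\rho_-\oplus\overline\rho_+$; be aware that the two summands are copies of $\overline C$ shifted by $\mp1$, each occupying many quantum degrees, so grading considerations do not by themselves preclude mixing --- the paper sidesteps this by stating its conclusion only for the map of direct sums.
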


\begin{proof}
Let $D$ and $D'$ both be diagrams for the same transverse link, $L$. Let
	\[\Phi:C(D)\too \overline{C}(D)\oplus\overline{C}(D)\]
and
	\[\Phi':C(D')\too \overline{C}(D')\oplus\overline{C}(D')\]
be the isomorphisms defined in Corollary~\ref{cor-ccexplicit} for $D$ and $D'$ respectively. 
Let $\rho:C(D)\too C(D')$ be the quasi-isomorphism on the chain complexes given by the invariance of odd Khovanov homology.
We have a quasi-isomorphism,
	\[\Phi'\circ\rho\circ\Phi^{-1}:\overline{C}(D)\oplus\overline{C}(D) \too C(D)\too C(D')\too \overline{C}(D')\oplus\overline{C}(D')\]
with
	\[\Phi'\circ\rho\circ\Phi^{-1}(0,\overline\psi(D))=(0,\pm\overline\psi(D')).\qedhere\]
\end{proof}

\begin{cor}\label{cor-altredzero}For transverse, alternating knot, $K$, if $sl(K)+1\neq \sigma(K)$, then $\psi(K)=0$. \end{cor}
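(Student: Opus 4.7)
The plan is to combine Proposition~\ref{prop-opiinrokh} with the known thinness of reduced odd Khovanov homology for alternating knots. Since $\psi(K)$ is identified (up to sign) with $\overline\psi(K)$ under the inclusion $\overline{Kh'}(K)\hookrightarrow Kh'(K)$, it suffices to show that $\overline\psi(K)=0$ whenever the bigrading $(0,sl(K)+1)$ fails to lie on the support diagonal of $\overline{Kh'}(K)$.

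First I would recall that for alternating knots the reduced odd Khovanov homology is thin: the results of Ozsv\'ath--Rasmussen--Szab\'o (together with Shumakovitch's observation comparing reduced odd and even Khovanov homology for alternating links) imply that $\overline{Kh'}_{m,s}(K)$ is supported on the single diagonal $s-2m=\sigma(K)$ (with the sign convention used in the statement of the corollary). This is the crucial input, and I would cite it rather than reprove it.

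Next I would apply Proposition~\ref{prop-opiinrokh}, which tells us that $\overline\psi(K)\in\overline{Kh'}_{0,sl(K)+1}(K)$. Since this class sits in homological grading $m=0$ and quantum grading $s=sl(K)+1$, its bigrading lies on the support diagonal precisely when $sl(K)+1=\sigma(K)$. Under the hypothesis $sl(K)+1\neq\sigma(K)$, the group $\overline{Kh'}_{0,sl(K)+1}(K)$ is therefore zero, so $\overline\psi(K)=0$. Proposition~\ref{prop-opiinrokh} then gives $\psi(K)=\pm\overline\psi(K)=0$ in $Kh'(K)$ under the inclusion, which proves the corollary.

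The main obstacle is only bookkeeping: one must confirm that the appropriate thinness statement for reduced odd Khovanov homology of alternating knots matches the sign convention for $\sigma(K)$ used in the statement of the corollary. No new computation is needed; the argument is purely a grading obstruction supported by citing the alternating thinness result.
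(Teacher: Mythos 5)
Your proposal is correct and follows the same route as the paper: identify $\psi(K)$ with the reduced class in $\overline{Kh'}_{0,sl(K)+1}(K)$ via Proposition~\ref{prop-opiinrokh}, then invoke the thinness of reduced odd Khovanov homology for alternating knots (the paper cites \cite[Proposition~5.2]{ORSz}, which states $\overline{Kh'}_{m,s}(K)=0$ whenever $s-2m\neq\sigma(K)$) to conclude the group vanishes when $sl(K)+1\neq\sigma(K)$. No difference in substance.
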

\begin{proof} By the previous proposition, $\psi(K)\in\overline{Kh'}_{0,sl(K)+1}(K)$. Since $K$ is alternating, by~\cite[Proposition~5.2]{ORSz}, then $\overline{Kh'}_{m,s}(K)=0$ whenever $s-2m\neq\sigma(K)$, and thus, since we assume $sl(K)+1\neq\sigma(K)$, it follows that
	\[\overline{Kh'}_{0,sl(K)+1}(K)=0.\]
Thus, $\psi(K)=0$.\end{proof}

\section{Properties}
\label{sec-prop}

\begin{prop}
\label{prop-unknot}
For the standard transverse unknot, $O$, up to a sign, $\psi(O)$ is a generator of $Kh'_{0,-1}(O)\cong R$. 
\end{prop}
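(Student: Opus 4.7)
The plan is to represent the standard transverse unknot $O$ as the closure of the trivial $1$-braid (a single strand with no crossings). With this diagram, the combinatorics of the cube of resolutions collapses essentially to a single point, so everything can be read off directly from the definitions in Subsection~\ref{ssec-not} and Definition~\ref{defn-opi}.

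First I would observe that for the trivial $1$-braid we have $b=1$, $n_+ = n_- = 0$, and the crossing set $\mathcal X$ is empty. Hence there is a unique resolution $\alpha'$ with $|\alpha'|=0$, and its resolution diagram consists of a single circle, giving $V_{\alpha'}=\langle v_1\rangle$ and
\[
\Lambda^* V_{\alpha'} \;=\; R \,\oplus\, R\langle v_1\rangle.
\]
Since there are no edges in the cube, the differential is identically zero, so the chain complex is its own homology and $Kh'(O) \cong R \oplus R\langle v_1\rangle$.

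Next I would compute the bigrading. The homological grading is $r = n_- - |\alpha'| = 0$, so both summands sit in homological degree $0$. For the quantum grading, using $\dim V_{\alpha'}=1$ and the formula
\[
Q \;=\; (\dim V_{\alpha'}) - 2k + n_+ - 2n_- + |\alpha'| \;=\; 1 - 2k,
\]
one finds $Q=1$ for the scalar summand $\Lambda^0 V_{\alpha'}$ and $Q=-1$ for $\Lambda^1 V_{\alpha'} = R\langle v_1\rangle$. Therefore $Kh'_{0,-1}(O) = R\langle v_1\rangle \cong R$, freely generated by $v_1$. By Definition~\ref{defn-opi}, $\psi(O)= v_1$, which is (up to sign) this generator. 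Note that the self-linking number of $O$ from this braid presentation is $sl(O) = -b + n_+ - n_- = -1$, so this is also consistent with Proposition~\ref{prop-opigrading}.

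There is no real obstacle here: because the braid has no crossings, the cube of resolutions is a single vertex, the chain complex is concentrated there, and the claim is a direct unwinding of the gradings. The only mild care needed is that the standard transverse unknot is presented as the closure of the trivial $1$-braid (and not, say, a stabilized braid), so that the invariant is computed in this minimal model; this is justified by Theorem~\ref{thm-inv}, which guarantees that the class $\psi(O)$ does not depend on the chosen braid representative.
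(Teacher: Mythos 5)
Your proposal is correct and follows essentially the same route as the paper: take the trivial one-strand braid diagram of $O$, note the cube of resolutions is a single vertex with one circle so the differential vanishes, and read off that $\Lambda^1 V_{\alpha'}$ sits in bidegree $(0,-1)$ while $\psi(O)$ is its generator. The extra detail you supply on the quantum grading computation and on invariance under choice of braid representative is consistent with the paper's Propositions~\ref{prop-opigrading} and Theorem~\ref{thm-inv}.
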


\begin{proof}
We take the trivial diagram of $O$. 
Thus, there are no crossings, and there is only one resolution corresponding to the unique function in $\{0,1\}^\emptyset$. 
The one circle in this resolution is exactly our presentation of $O$, and it has generator $v_0$. 
Thus, we have chain complex
	\[0\too \Lambda^*\langle v_0\rangle\too 0,\]
and $Kh'_0=R[-1]\oplus R[1]$.
Since $\psi(O)=v_0\in\Lambda^1\langle v_0\rangle=\langle v_0\rangle= R\cdot v_0$ and its grading is $(0,-1)$, the proof is complete.
\end{proof}

Although trivial, the previous proposition is important to note because $\psi$ is defined first on the level of being a cycle $\tilde\psi$ in the chain complex. 
Thus, it is not immediate that there are links in which $\tilde\psi$ is not also a boundary. 
The following proposition, which is analogous to \cite[Propostion~3]{Pla} however, gives us one condition in which we can guarantee that $\psi$ is a boundary.

\begin{prop}
\label{prop-negstab}
If $L$ is the negative stabilization of another transverse link, then $\psi(L)=0$. 
\end{prop}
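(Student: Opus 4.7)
The plan is to exhibit an explicit chain $\eta \in C(D')$ whose differential is $\pm\psi(D')$, so that $\psi(L)$ vanishes in $Kh'(L)$. Fix a braid diagram $D$ on $b$ strands for the transverse link being stabilized, and let $D'$ denote its negative stabilization, obtained by adding one strand and a single negative crossing at the bottom between the two rightmost strands. At the distinguished resolution $\alpha'$ of $D'$ each negative crossing (including the stabilizing one) is $1$-smoothed and each positive crossing is $0$-smoothed, so the braid splits into $b+1$ parallel bands that close to $b+1$ disjoint circles $a_1,\dots,a_{b+1}$, and $\psi(D') = v_1\wedge\cdots\wedge v_{b+1}$.

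Consider the resolution $\beta$ that agrees with $\alpha'$ everywhere except at the stabilizing crossing, which is $0$-smoothed. The horizontal arcs there fuse the $b$-th and $(b+1)$-th bands into a single circle $c$, while $a_1,\dots,a_{b-1}$ remain unchanged, so $V_\beta = \langle v_1,\dots,v_{b-1},v_c\rangle$. My candidate bounding chain is the top-degree element
	\[\eta := v_1\wedge\cdots\wedge v_{b-1}\wedge v_c \in \Lambda^b V_\beta.\]
The edge $\beta\to\alpha'$ is the split cobordism re-separating $c$ into $a_b\sqcup a_{b+1}$. Under the standard identification $v_c\leftrightarrow v_b$ in $V_{\alpha'}$, the split map sends $\eta$ to $(v_b - v_{b+1})\wedge v_1\wedge\cdots\wedge v_{b-1}\wedge v_b = \pm v_1\wedge\cdots\wedge v_{b+1} = \pm\psi(D')$.

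It then remains to check that every other outgoing edge from $\beta$ annihilates $\eta$. Such edges flip a positive crossing of $D$ from $0$- to $1$-smoothing. At $\beta$ each strand of $D$ lies on its own circle (the $i$-th strand on $a_i$ for $i\le b-1$, and the $b$-th strand on $c$), so the horizontal arc produced at the flipped crossing always joins two \emph{distinct} circles of $\beta$; hence each such edge is a merge. A merge map factors through the quotient $V_\beta\to V_\beta/(v_i - v_j)$, and since $\eta$ is top-degree it contains both $v_i$ and $v_j$ as factors: after the identification those two factors coincide and the image vanishes in $\Lambda^*$. Summing the contributions gives $d(\eta) = \pm\psi(D')$, so $\psi(L) = 0$. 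The only substantive verification is that every non-stabilizing edge out of $\beta$ is a merge and not a split, which is immediate from the fully parallel structure of $\beta$ on the $D$-portion of the diagram; signs play no role in the argument, since the conclusion is insensitive to the overall sign of $\eta$.
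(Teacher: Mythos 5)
Your proposal is correct and follows essentially the same route as the paper: the paper's bounding chain $\tilde\phi$ lives in exactly your resolution $\beta$ (the distinguished resolution with the stabilizing negative crossing re-$0$-smoothed), is likewise the top-degree wedge of all generators there, and the paper makes the same two observations — that all edges at positive crossings are merges killing a top-degree element, while the stabilizing edge is the split sending $\tilde\phi$ to $(v_0-v_1)\wedge\tilde\phi=\pm\psi$. The only cosmetic difference is that the paper phrases $\tilde\phi$ as the image of $\psi(L')$ for the destabilized link $L'$, which you note implicitly via the "fully parallel structure of $\beta$ on the $D$-portion."
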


\begin{proof}
Assume $L$ is the negative stabilization of another transverse link. 
Thus for an appropriate choice of braid representation, there is a part of the diagram of $L$ that contains just the negative stabilization with a single negative crossing, and in the chain complex there are two cubes of resolutions corresponding to the 0- and 1-smoothings at this crossing. 
We will call the diagram $D$, shown in Figure~\ref{fig-negstab}.
We note that $d^\star$ here is given by the split cobordism. 
If we label the circles of $D_0$ and $D_1$ that extend beyond the diagram $a_1$, with corresponding generator $v_1$, and the circle entirely contained in the diagram of $D_1$ by $a_0$, with corresponding generator $v_0$, then the map $d^\star:D_0\too D_1$ is given by $\omega\mapsto (v_0-v_1)\wedge\omega$.
Note that $\tilde\psi(L)\in C(D_1)$. 
 \begin{figure}
	\begin{center}\begin{tikzpicture}[scale=0.75]
		\color{red}
		\braidline{3}{1}{1}{0}{4}
		\braidline{3}{2}{1}{1}{-1}
		\braidline{3}{1}{1}{2}{5}

		\draw (2,-3.1) node[below]{$D$};

		\color{black}
		\braidline{3}{1}{8}{0}{4}
		\braidline{2}{2}{8}{1}{2}
		\braidline{3}{1}{8}{2}{5}

		\braidline{3}{1}{14}{0}{4}
		\braidline{2}{2}{14}{1}{3}
		\braidline{3}{1}{14}{2}{5}
		\def\dx{8}
		\def\dy{-3}
		\long\def\diagbox#1#2#3{\draw (#1*\dx,#2*\dy) ++(-.2,-3.2) -- ++(2.4,0) -- ++(0,4.9) -- ++(-2.4,0) -- ++(0,-4.9);
		\draw (#1*\dx,#2*\dy) ++(2,-3.1) node[below]{$#3$};}

		\diagbox{1}{0}{D_0}
		\diagbox{1.75}{0}{D_1}

		\draw[->] (10.5,-.75) -- (13.5,-.75);
	\end{tikzpicture}\end{center}
 \caption[A diagram of a negative stabilzation.]{\figbld{A diagram of $D$ focused on an added negative stabilzation of $L$, and its resolution cube.}}
 \label{fig-negstab}
\end{figure}

It suffices to show that $\tilde\psi$ is a boundary, so we construct $\tilde\phi\in C(D_0)$ such that $d\tilde\phi=\tilde\psi$. 
We note that in the cube of resolutions $D_1$, there is a specific resolution in which $\tilde\psi$ resides. We consider the corresponding resolution in $D_0$. 
That is, the resolution that has a 0-smoothing at every positive crossing and a 1-smoothing at every negative crossing except the one added by the negative stabilization. 
If there are $n$ generators $v_1,\dots,v_n$ in the vector space corresponding to this resolution, then we let $\tilde\phi=v_1\wedge\dots\wedge v_n$. 
Note, there is a natural isomorphism between $C(D_0)$, and the chain complex associated to the link $L'$ to which a negative stabilization was added. 
Up to a sign, our constructed element is the image of $\psi(L')$ under this isomorphism. 

Computing $d\tilde\phi$, we note there are two sources where the resolution for $L$ has 0-smoothings: the positive crossings of $L$ and the negative crossing introduced by the negative stabilization. 
On the positive crossings, given our braid representation, a change from a 0-smoothing to a 1-smoothing corresponds to a merge cobordism. 
Thus, on each summand in the differential $\tilde\phi\mapsto0$, as two factors in the wedge product would be identified. 
Hence, 
	\[d\tilde\phi=d^\star\tilde\phi=(v_0-v_1)\wedge\tilde\phi =v_0\wedge v_1\wedge\dots\wedge v_n = \tilde\psi.\qedhere\]
\end{proof}

The following proposition is an analog of \cite[Theorem~4]{Pla}. However, as it is not yet know if the odd Khovanov homology is functorial, the proposition below is necessarily weaker. 

\begin{prop}
\label{prop-posx}
Suppose we have a transverse link $L$ with diagram $D$, and $L_0$ is the transverse link with diagram $D_0$ obtained by replacing a positive crossing in $L$ with the 0-smoothing. 
There is a homomorphism,
	\[p: Kh'(L)\too Kh'(L_0)\]
such that $p(\psi(L))=\pm\psi(L_0)$. 
\end{prop}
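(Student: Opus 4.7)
The plan is to exhibit $p$ as the quotient map arising from the mapping cone decomposition of $C(D)$ at the distinguished positive crossing, and then track $\psi(D)$ through this projection.

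Let $x$ denote the positive crossing at which the $0$-smoothing is performed. Splitting the cube of resolutions of $D$ according to the value $\alpha(x)\in\{0,1\}$ identifies the $0$-subcube with the cube for $D_0$ and the $1$-subcube with the cube for $D_1$, where $D_1$ is the $1$-smoothing of $x$. After fixing a sign assignment on $C(D)$ and restricting it, the chain complex decomposes as
\[ C(D) \;\cong\; \cone\bigl( d_x : C(D_0) \too C(D_1) \bigr), \]
in exactly the style used in the setups of Propositions~\ref{prop-r1} and~\ref{prop-r2}. In this decomposition $C(D_1)$ sits as a subcomplex and $C(D_0)$ is the quotient, so the projection $\pi : C(D) \too C(D_0)$ is a chain map. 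Passing to homology and composing with the canonical identification of $H(C(D_0))$ with $Kh'(L_0)$ (which differs from the gradings on $C(D_0)$ as a subquotient of $C(D)$ only by a shift in quantum grading arising from the single removed positive crossing, matching $sl(L)=sl(L_0)+1$), one obtains the desired homomorphism $p: Kh'(L) \too Kh'(L_0)$.

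To identify $p(\psi(L))$, observe that by Definition~\ref{defn-opi} the cycle $\psi(D)$ lives in the resolution $\alpha'$ with $\alpha'(x)=0$, since $x$ is positive; hence $\psi(D)\in C(D_0)\subset C(D)$, and $\pi(\psi(D)) = \psi(D)$ viewed inside $C(D_0)$. Since $D_0$ is itself a closed braid on the same $b$ strands as $D$, and the restriction of $\alpha'$ to the crossings of $D_0$ is precisely the oriented resolution splitting $D_0$ into $b$ parallel bands, the element $\pi(\psi(D))$ equals the wedge $v_1\wedge\cdots\wedge v_b$ defining $\psi(D_0)$, up to a possible sign coming from reconciling the restricted sign assignment with the canonical one on $C(D_0)$. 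Passing to homology then yields $p(\psi(L)) = \pm\,\psi(L_0)$.

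The principal obstacle will be bookkeeping for signs and grading shifts: one must verify that the sign assignment restricted from $C(D)$ to $C(D_0)$ differs from a canonical sign assignment on $C(D_0)$ only by a global sign, and check that the quantum-degree shift is precisely $+1$. These are routine but slightly delicate, and they are exactly the reason the identification can only be made up to an overall sign, consistent with the observation at the end of Subsection~\ref{ssec-inv} that $\psi$ itself is well-defined only up to sign.
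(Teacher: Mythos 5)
Your proof is correct, but it takes a genuinely different route from the paper. You define $p$ as the projection from the mapping-cone decomposition $C(D)\cong\cone\bigl(d_x:C(D_0)\to C(D_1)\bigr)$ onto the quotient subcube $C(D_0)$ (the skein-exact-triangle map), under which $\tilde\psi(D)$ maps \emph{on the nose} to the chain $\tilde\psi(D_0)$, since the oriented resolution of $D$ restricts to the oriented resolution of $D_0$ with the same $b$ circles. The paper instead factors the resolution geometrically: it attaches a $1$-handle beside the positive crossing to produce a diagram $D_0'$ (which is $D_0$ with a positive kink), computes that the induced split map at the relevant vertex sends $\tilde\psi(D)\mapsto(v_1-v_0)\wedge\tilde\psi(D)=-\tilde\psi(D_0')$, and then invokes Proposition~\ref{prop-r1} to identify $\psi(D_0')$ with $\psi(D_0)$. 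Your approach buys simplicity and robustness: the projection is manifestly a chain map (indeed it is exactly the quotient in the paper's own inductive construction $C(D)=\cone(f^{\hat x}_*)$ from Subsection~\ref{ssec-sa}, so even the sign assignment restricts verbatim), whereas the paper's route quietly assumes that the elementary saddle cobordism induces a chain map on the odd complex, a sign-compatibility point that is not fully spelled out there. What the paper's route buys is a cobordism-theoretic interpretation of $p$, in the spirit of Plamenevskaya's original Theorem~4 and of the (conjectural) functoriality of $Kh'$. Your grading bookkeeping is also right: $n_-$ is unchanged so there is no homological shift, and the quantum shift is $-1$, matching $sl(L_0)=sl(L)-1$. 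The one caveat worth stating is that since the proposition does not pin down a canonical $p$ (odd Khovanov homology is not known to be functorial), your $p$ and the paper's $p$ need not coincide as homomorphisms; but either one satisfies the statement as written.
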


\begin{proof}
In Figure~\ref{fig-posx}, we have the composition of the cobordism from attaching a 1-handle on one side of the positive crossing, and the (R1) move to undo the twist. 
We examine the diagram of the cobordism in the vertex of the resolution cube in which $\psi(D)$ resides (Figure~\ref{fig-posx}: top), and the corresponding resolutions of $D_0$ (Figure~\ref{fig-posx}: bottom) both with and without the extra twist by (R1). 

\begin{figure}
	\begin{center}\begin{tikzpicture}[scale=0.75]
		\draw [ultra thick] (-5,-3) -- ++(0,1.141) arc (180:90:.604cm) -- ++(0.146,0) arc (270:315:.604cm) -- ++(.646,.646) arc(315:360:.604cm)--++(0,1.5) ;
		\draw [ultra thick] (-5,1.5)--++(0,-1.141) arc(180:270:.604cm)--++(0.146,0) arc(90:45:.604cm) -- ++(.2,-.2) ++ (.246,-.246) -- ++(.2,-.2) arc(45:0:.605cm)--++(0,-1.5);
		\draw [very thick, densely dotted] (-4.2,-1.255) -- (-4.2,-.225);
		\draw [very thick, densely dotted] (-4.7,-1.255) -- (-4.7,-.245);

		\draw (-4,-3) node[below] {$D$};

		\draw [thick,->] (-2.5,-.75) -- ++(3,0);
		\draw (-2.5,-.75) ++(1.5,0) node[above] {1-handle};
		\draw [ultra thick] (1,1.5) -- ++(0,-1.141) arc(180:225:.604cm) arc (45:0:.604cm)-- ++(0,-.504);
		\draw [ultra thick] (1,-3) -- ++(0,1.141) arc(180:135:.604cm) arc (-45:0:.604cm);
		\draw [ultra thick] (3,1.5) -- ++(0,-1.5) arc(360:315:.604cm) -- ++(-.546,-.546) arc(315:45:.32cm) -- ++(.1,-.1) ++(.246,-.246) -- ++(.2,-.2) arc(45:0:.604cm)--++(0,-1.5);

 		\draw [thick,->] (3.5,-.75) -- ++(3,0);
		\draw (3.5,-.75) ++(1.5,0) node[above] {(R1)};

		\draw (2,-3) node[below] {$D_0'$};

		\braidline{1}{3}{7}{0}{1}
		\braidline{1}{1}{7}{1}{7}
		\braidline{1}{3}{7}{2}{1}

		\braidline{1}{3}{9}{0}{1}
		\braidline{2}{1}{8}{1}{6}
		\braidline{1}{3}{9}{2}{1}

		\draw (8,-3) node[below] {$D_0$};
	\end{tikzpicture}\end{center}

	\begin{center}\begin{tikzpicture}[scale=0.75]
		\draw [ultra thick] (-5,-3) -- ++(0,1.141) arc (180:90:.604cm) -- ++(0.196,0) arc (270:315:.604cm)  
 arc(-45:45:.457cm) arc(45:90:.604cm) -- ++(-.196,0) arc(270:180:.604cm) -- ++(0,1.141);
 		\draw (-5,-3) node[below] {$v_1$};
		\draw (-3,-3) node[below] {$v_2$};
 
		\braidline{1}{3}{-3}{0}{1}
		\braidline{2}{1}{-4}{1}{6}
		\braidline{1}{3}{-3}{2}{1}

		\draw [thick,->] (-2.5,-.75) -- ++(3,0);
		\draw (-2.5,-.75) ++(1.5,0) node[above] {$F_S$};
		\draw (-2.5,-.75) ++(1.5,0) node[below] {$(v_1-v_0)\wedge$};

		\draw (1,-3) node[below] {$v_1$};
		\draw (2,-1.1) node[below] {$v_0$};
		\draw (3,-3) node[below] {$v_2$};

		\braidline{1}{3}{1}{0}{1}
		\braidline{1}{1}{1}{1}{7}
		\braidline{1}{3}{1}{2}{1}
 
		\draw [ultra thick] (2,-.75) circle (.32cm);
 
		\braidline{1}{3}{3}{0}{1}
		\braidline{2}{1}{2}{1}{6}
		\braidline{1}{3}{3}{2}{1}

		\draw [thick,->] (3.5,-.75) -- ++(3,0);
		\draw (3.5,-.75) ++(1.5,0) node[above] {$F_M$};

		\braidline{1}{3}{7}{0}{1}
		\braidline{1}{1}{7}{1}{7}
		\braidline{1}{3}{7}{2}{1}
		
		\braidline{1}{3}{9}{0}{1}
		\braidline{2}{1}{8}{1}{6}
		\braidline{1}{3}{9}{2}{1}
 
 		\draw (7,-3) node[below] {$v_1$};
		\draw (9,-3) node[below] {$v_2$};
	\end{tikzpicture}\end{center}
\caption[The cobordism to remove a positive crossing.]{\figbld{Diagram in vertex for $\psi(D)$.} Top: the cobordism of the 0-smoothing to remove a positive crossing. Bottom: the corresponding resolution complex.}
\label{fig-posx}
\end{figure}
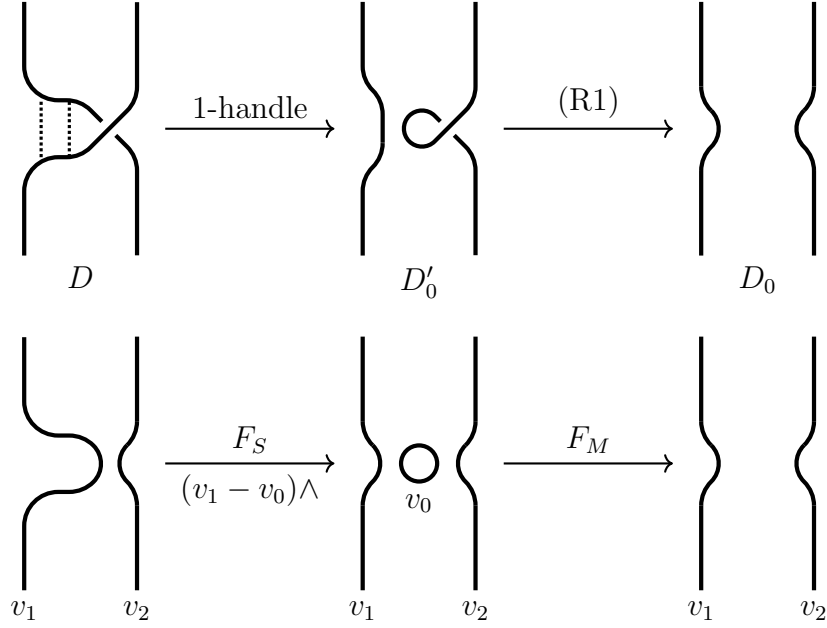

Since the first part of the composition comes from the split cobordism, we have
	\begin{align*}
	F_S(\psi(D)) 
		&=F_S(v_1\wedge v_2\wedge\dots)\\
		&=(v_1-v_0)\wedge (v_1\wedge v_2\wedge\dots)\\
		&=-v_0\wedge v_1\wedge v_2\wedge\dots\\
		&=-\psi(D_0')\\
	\intertext{and, as we have already seen in Proposition~\ref{prop-r1}}
		&=\pm\psi(D_0).\qedhere
	\end{align*}
\end{proof}

\begin{cor}
If $L$ can be represented by a quasi-positive braid, then $\psi(L)\neq0$. 
\end{cor}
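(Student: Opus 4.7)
The plan is to reduce the statement to showing that $\psi$ of an unlink is nonzero, via iterated application of Proposition~\ref{prop-posx}. Suppose $L$ is the closure of a quasi-positive braid $\beta = \prod_{i=1}^n w_i\sigma_{j_i}w_i^{-1}$ on $b$ strands, where each $w_i$ is a braid word and each $\sigma_{j_i}$ is a standard positive generator. The braid diagram for $\beta$ has $n$ distinguished positive crossings, one for each factor's central $\sigma_{j_i}$.

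I would apply Proposition~\ref{prop-posx} successively to these $n$ distinguished positive crossings, producing a sequence of homomorphisms
\[Kh'(L)=Kh'(L_0)\too Kh'(L_1)\too\cdots\too Kh'(L_n),\]
each sending $\psi(L_{i-1})\mapsto\pm\psi(L_i)$, where $L_i$ is the closure of the diagram obtained from $\beta$ by 0-smoothing the first $i$ distinguished crossings. Since the 0-smoothing of a braid crossing simply replaces it with two parallel strands, the intermediate diagrams remain braid diagrams, and the remaining $\sigma_{j_k}$ (for $k>i$) stay positive braid crossings to which Proposition~\ref{prop-posx} applies.

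After all $n$ smoothings, the resulting braid word is $\prod_{i=1}^n w_i w_i^{-1}$, which represents the identity in $B_b$. Hence $L_n$ is transversely isotopic to the $b$-component unlink $U_b$, and by Theorem~\ref{thm-inv} we may compute $\psi(L_n)$ from the trivial $b$-braid diagram. That diagram has empty crossing set, so its chain complex is the single exterior algebra $\Lambda^*V_{\alpha'}$ with $V_{\alpha'}=\langle v_1,\dots,v_b\rangle$ and zero differential, and $\psi(U_b)=v_1\wedge\cdots\wedge v_b$ is a nonzero generator of $\Lambda^b V_{\alpha'}$. Chasing $\psi(L)$ through the composition then produces $\pm\psi(U_b)\neq 0$, from which $\psi(L)\neq 0$ follows.

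The main bookkeeping obstacle is verifying that the hypothesis of Proposition~\ref{prop-posx} remains satisfied throughout the induction: after any number of 0-smoothings of braid crossings, the next $\sigma_{j_i}$ should still be a positive crossing of a valid braid diagram. This reduces to the observation that local 0-smoothing of a braid generator does not interact with the signs or braid structure of the other crossings. A minor point to check is that the simplification from $\prod w_i w_i^{-1}$ to the identity braid in $B_b$ uses only braid group relations, which are among the transverse Markov moves and therefore preserve $\psi$ by Theorem~\ref{thm-inv}.
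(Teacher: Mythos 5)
Your argument is correct and is exactly the intended derivation: the paper states this as an immediate corollary of Proposition~\ref{prop-posx} with no written proof, and the evident route (as in the even case) is precisely your iterated $0$-smoothing of the $n$ distinguished quasi-positive crossings, terminating at the trivial $b$-strand braid whose closure has $\psi = v_1\wedge\cdots\wedge v_b \neq 0$ by the same computation as Proposition~\ref{prop-unknot}. The bookkeeping points you flag (intermediate diagrams remain braid diagrams; the reduction of $\prod w_iw_i^{-1}$ to the identity braid is covered by Theorem~\ref{thm-inv}) are handled correctly, so there is nothing to add.
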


\section{Computations}
\label{sec-comp}
In the original definition of odd Khovanov homology in \cite{ORSz}, the sign-assignment for the resolution cube is proved to exist but is not constructed. 
In his software for computing the even, (reduced) odd, and universal Khovanov homologies, Shumakovich implicitly provided an inductive construction of the sign assignment~\cite{Shu3}. 
An equivalent definition of the (unreduced) odd Khovanov is rigorously defined and proved to be equivalent to the original definition in Section~\ref{ssec-sa}. 
In Section~\ref{ssec-prog}, the author's program for computing the even and odd Plamenevskay invariants is summarized. 
We discuss observations made with the author's software in Section~\ref{ssec-compob}.

\subsection{The Sign Assignment}
\label{ssec-sa}
Let $\mathcal C$ be a category, and let $\mathcal I$ be a finite set. 
An \emph{$\mathcal I$-cube in $\mathcal C$} 
 consists of a collection of objects and morphisms of $\mathcal C$ as follows. 
For an $\mathcal I$-cube $C_*$, the \emph{vertices} of the cube are objects corresponding to elements of $\alpha\in\{0,1\}^{\mathcal I}$.
The \emph{height} of a vertex $C_\alpha$ is defined 
	\[|\alpha| = \sum_{i\in\mathcal I}\alpha(i).\]
The \emph{edges} are morphisms $e_\star:C_{\alpha_0}\too C_{\alpha_1}$ where $\alpha_0,\alpha_1\in\{0,1\}^{\mathcal I}$ differ only at a single index $i'\in\mathcal I$ with $\alpha_0(i')=0$ and $\alpha_1(i')=1$. 
The \emph{faces} correspond to quadruples $a_{00},a_{01},a_{10},a_{11}\in\{0,1\}^{\mathcal I}$ that agree except on a pair of indices, $i_1,i_2\in\mathcal I$ where $a_{jk}(i_1)=j$ and $a_{jk}(i_2)=k$ for $i,j=0,1$. 
	\begin{center}\begin{tikzcd}
		& C_{\alpha_{01}} \arrow[rd,"e_{\star1}"] & \\
		C_{\alpha_{00}} \arrow[ru,"e_{0\star}"] \arrow[rd,"e_{\star0}"'] &\circ& C_{\alpha_{11}}\\
		& C_{\alpha_{10}} \arrow[ru,"e_{1\star}"']
	\end{tikzcd}\end{center}
A \emph{commutative $\mathcal I$-cube in $\mathcal C$} is a $\mathcal I$-cube where all faces are required to commute. 
Thus, with edges labeled as above, faces satisfy $e_{\star1}\circ e_{0\star}=e_{1\star}\circ e_{\star0}$.
A \emph{skew $\mathcal I$-cube in $\mathcal C$} where $\mathcal C$ is an abelian category is defined similarly, with the condition on faces changed so that all faces are required to anti-commute.

For a (skew) cube $C_*$, its set of edges is denoted $\mathcal E(C_*)$ and for any $\alpha\in\{0,1\}^{\mathcal I}$ the subset of edges with $C_\alpha$ as its source is denoted $\mathcal E(\alpha)$. 
Similarly, the set of faces (or squares) is denoted $\mathcal S(C_*)$. 
Note our convention that $\star$ denotes a change from 0 to 1 in the indices of its vertex labels. 

A homomorphism between (skew) $\mathcal I$-cubes in $\mathcal C$ $f_*:C_*\too C'_*$, which is called a \emph{(skew) cube map}, consists of
a collection of morphisms $f_\alpha:C_\alpha\too C_\alpha'$, such that for each pair of corresponding edges $e[C_*]:C_{\alpha_0}\too C_{\alpha_1}$ and $e[C'_*]:C'_{\alpha_0}\too C'_{\alpha_1}$, we have
	\[f_{\alpha_1}\circ e_\star[C_*] = e_\star[C'_*]\circ f_{\alpha_0}.\]
$\mathcal I$-cubes in $\mathcal C$ with cube maps form the category of $\mathcal I$-cubes $\operatorname{Cube}_{\mathcal I}(\mathcal C)$. 
Commutative $\mathcal I$-cubes in $\mathcal C$ with cube maps for the category of $\mathcal I$-cubes $\operatorname{CCube}_{\mathcal I}(\mathcal C)$.
Skew $\mathcal I$-cubes in $\mathcal C$ with cube maps form the category of skew $\mathcal I$-cubes $\operatorname{SCube}_{\mathcal I}(\mathcal C)$.
Note,
	\[\operatorname{Cube}_{\emptyset}(\mathcal C) \cong\operatorname{SCube}_\emptyset(\mathcal C)\cong \mathcal C.\]
For any abelian category $\mathcal C$, $\operatorname{SCube}_{\mathcal I}(\mathcal C)$ is itself an abelian category.
For $C_*,D_*\in\operatorname{SCube}_{\mathcal I}(\mathcal C)$, we define
	\[E_* = C_*\oplus D_*\]
such that for each $\alpha\in\{0,1\}^{\mathcal I}$, we define
	\[E_\alpha = C_\alpha\oplus D_\alpha,\]
and for each edge $e:\alpha_0\too\alpha_1$, we define
	\[e[E_*] = e[C_*]\oplus e[D_*].\]

Given a skew cube map $f_*:C_*\too C'_*$, the mapping cone $\cone(f_*)$ is a skew $\mathcal J$-cube where $\mathcal J=\mathcal I\sqcup\{\hat\imath\}$.
Vertices $\alpha\in\{0,1\}^{\mathcal J}$ with $\alpha(\hat\imath)=0$ all correspond to the vertices of $C_*$, 
	\[\cone(f_*)_\alpha = C_\alpha\]
with edges between such vertices also coming from $C_*$. 
Similarly, vertices with $\alpha(\hat\imath)=1$ all correspond to the vertices of $C'_*$,
	\[\cone(f_*)_\alpha = C'_\alpha,\]
however the edges between such vertices are the negatives of the edges in $C'_*$.
For edge $e_\star:\alpha_0\too\alpha_1$ with $\alpha_0$ and $\alpha_1$ differing only at $\hat\imath$, for $\alpha = \alpha_0|_{\mathcal I}=\alpha_1|_{\mathcal I}$,
we have
	\[e_\star := f_\alpha.\]

For each $m\in\Z$, there is a functor from skew cubes to (co)chain complexes,
	\[\operatorname{Ch}^m:\operatorname{SCube}(\mathcal C)\too\operatorname{Ch}(\mathcal C),\]
where for an object $C_*$ in $\operatorname{SCube}(\mathcal C)$, 
	\[\operatorname{Ch}^m(C_*)_r := \bigoplus_{|\alpha|+m = r} C_\alpha,\]
and 
	\[d^r:= \bigoplus_{|\alpha|+m=r,e\in\mathcal E(\alpha)} e.\]
For a cube map $f_*:C_*\too C'_*$, $\operatorname{Ch}^m(f_*)$ is the chain map $g_*$ where
	\[g_r := \bigoplus_{|\alpha|+m = r} f_\alpha.\]
	
Note, given a skew cube map $f_*:C_*\too C'_*$, 
	\[\operatorname{Ch}^m(\cone(f_*)) = \cone(\operatorname{Ch}^m(f_*)).\]
A skew cube map $f_*:C_*\too C'_*$ is a \emph{quasi-isomorphism} if $\operatorname{Ch}^m(f_*)$ is a quasi-isomorphism.

We define $\mathfrak C$ to be the subcategory of the $(1+1)$-dimensional cobordism category.

Let $D$ be a link diagram and $\mathcal X$ be the set of crossings of $D$. 
We define $n=|\mathcal X|$, and $n_-$ (resp. $n_+$) as the number of negative (resp. positive) crossings. 
So, $n = n_-+n_+$. 
Each crossing has two possible smoothings, which we will label the 0- and 1-smoothings, defined by Figure~\ref{fig-sm}.
\begin{figure}
	\begin{center}\begin{tikzpicture}
		\nxp{0}{2}
		\hsp{-1.5}{0}
		\vsp{1.5}{0}
		
		\draw [->]  (0,2.2) -- (-.5,1.4);
		\draw [->] (1,2.2) -- (1.5,1.4);
		
		\draw (-1,0) node {0-smoothing};
		\draw (2,0) node {1-smoothing};
	\end{tikzpicture}\end{center}
\caption[0- and 1-smoothings of a crossing.]{\figbld{0- and 1-smoothings of a crossing.}}
\label{fig-sm}
\end{figure}
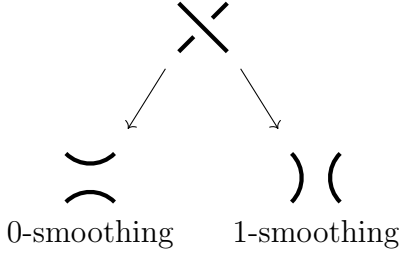

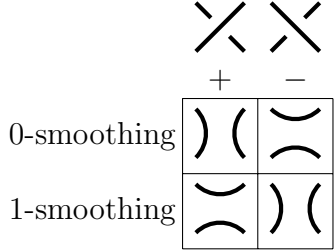
\begin{figure}
	\begin{center}\begin{tikzpicture}
		\pxp{0}{0.2}
		\nxp{1}{0.2}
		
		\hsp{1}{-1.213}
		\vsp{0}{-1.213}
		\hsp{0}{-2.213}
		\vsp{1}{-2.213}
		\draw (0,0) grid (2,-2);
		
		\draw (-1.2,-.5) node {0-smoothing};
		\draw (-1.2,-1.5) node {1-smoothing};
		
		\draw (.5,0.25) node {$+$};
		\draw (1.5,0.25) node {$-$};
	\end{tikzpicture}\end{center}
\caption[0- and 1-smoothings of a crossing in a braid diagram.]{\figbld{0- and 1-smoothings of a crossing in a vertically oriented braid diagram.}}
\label{fig-brsm}
\end{figure}

If each crossing in $D$ is resolved by either a 0- or 1- smoothing, the result is a collection of disjoint circles in the plane, called a \emph{resolution} of $D$. 
There is a resolution cube $R(D)\in\operatorname{CCube}_{\mathcal X}(\mathfrak C)$, where the vertex corresponding to each $\alpha\in\{0,1\}^{\mathcal X}$ is the resolution obtained by replacing each crossing $x\in\mathcal X$ by its $\alpha(x)$-smoothing. The edges correspond to either a merge or a split of a pair of circles as well as the identity cobordism on all other circles. 

We describe the elementary cobordisms pictorially with figures where the source of the morphism is the set of circles at the bottom of the figure, and the target is the set of circles at the top.
See Figure~\ref{fig-cob}. 
To preserve the skew structure later, we fix a sign convention by labeling each crossing with an arrow that induces an arrow on the smoothings of this crossing as in Figure~\ref{fig-signassignment}. There are two possible choices at each crossing. 
\begin{figure}
	\begin{center}\begin{tikzpicture}
		\draw (1.5,-1) node[below] {(a) $M$: the merge cobordism.};
		\botcirc{0}{0}
		\botcirc{2}{0}
		\topcirc{1}{2}
		\draw (0,0) to[out=90, in=270] (1,2);
		\draw (3,0) to[out=90, in=270] (2,2);
		\draw (1,0) to[out=90, in=90] (2,0);
		\nodeb{0}{0}{a_0}
		\nodeb{2}{0}{a_1}
		\nodea{1}{2}{a_0\sim a_1}
	\end{tikzpicture}
	\qquad\qquad
	\begin{tikzpicture}
	\draw (1.5,-1) node[below] {(b) $S$: the split cobordism.};
		\botcirc{1}{0}
		\topcirc{0}{2}
		\topcirc{2}{2}
		\draw (1,0) to[out=90, in=270] (0,2);
		\draw (2,0) to[out=90, in=270] (3,2);
		\draw (1,2) to[out=270, in=270] (2,2);
		\nodeb{1}{0}{a_1}
		\nodea{0}{2}{a_0}
		\nodea{2}{2}{a_1}
	\end{tikzpicture}\end{center}
\caption[Elementary $(1+1)$-Cobordisms.]{\figbld{Elementary $(1+1)$-Cobordisms.}}
\label{fig-cob}
\end{figure}
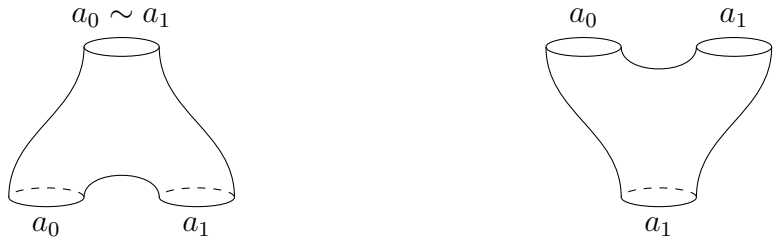

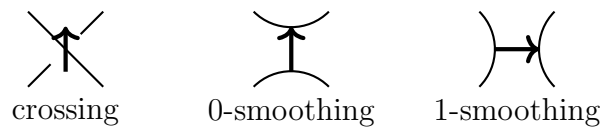
\begin{figure}
\begin{center}
\begin{tikzpicture}
\draw [white] (0,2) circle (.1cm);
\draw[thick] (1,0)--(0,1) (0,0)--(0.3,0.3) (0.7,0.7)--(1,1);
\draw[ultra thick, ->] (.5,.2)--(.5,.8);
\draw (0.5,0) node[below] {crossing};
\draw[thick] (3,0) to[out=45,in=135] (4,0) (3,1) to[out=-45,in=-135] (4,1);
\draw[ultra thick, ->] (3.5,.2)--(3.5,.8);
\draw (3.5,0) node[below] {$0$-smoothing};
\draw[thick] (6,0) to[out=45,in=-45] (6,1) (7,0) to[out=135,in=-135] (7,1);
\draw[ultra thick, <-] (6.8,.5)--(6.2,.5);
\draw (6.5,0) node[below] {$1$-smoothing};
\end{tikzpicture}
\end{center}
\caption[Crossing arrows for orienting cobordisms in the resolution cube.]{\figbld{Crossing arrows for orienting the cobordisms in the resolution cube.} The arrows in a single crossing in the knot diagram (left), its 0-smoothing (middle), and its 1-smoothing (right). The arrow in the middle diagram can also be viewed as the 1-handle attached in the cobordism connecting resolutions, which differ by a 0- and 1-smoothing at this crossing.}
\label{fig-signassignment}
\end{figure}

Each face of the diagram corresponds to one of the four types as depicted in Figure~\ref{fig-sqtype}. 
There is a function
	\[\sgn{\mathcal S}:\mathcal S(R(D)) \too \{\pm 1\},\]
where a square is mapped to $+1$ if it is type C or type Y, and $-1$ if it is type A or type X. 
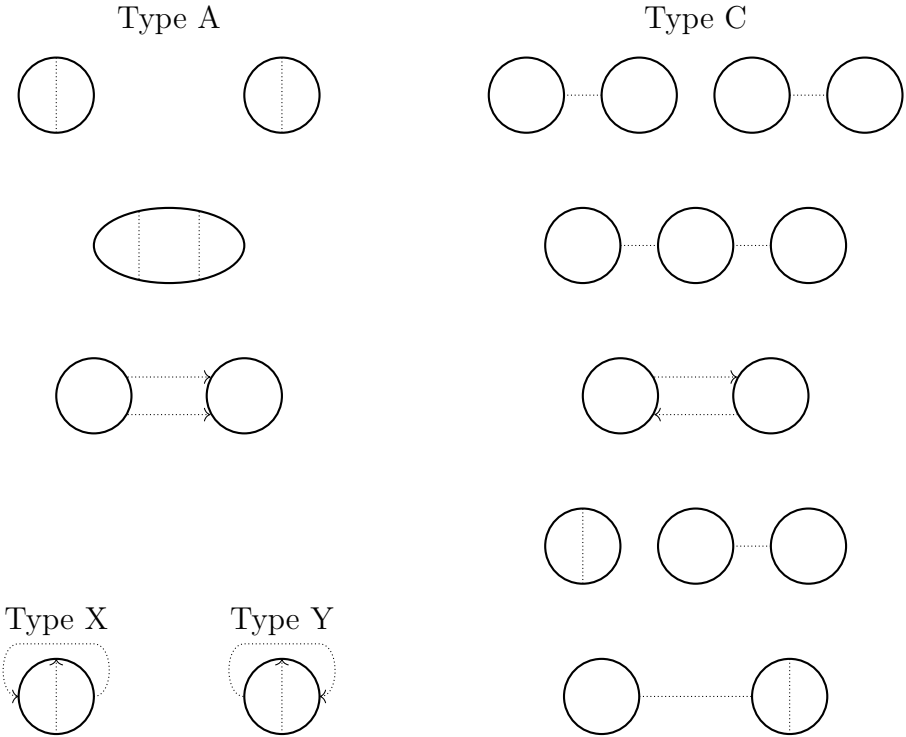
\begin{figure}
\begin{center}
\begin{tikzpicture}

\begin{scope}
\draw (0,0) node {Type A};

\draw[thick] (-1.5,-1) circle (.5);
\draw[densely dotted] (-1.5,-1.5)--(-1.5,-.5);
\draw[thick] (1.5,-1) circle (.5);
\draw[densely dotted] (1.5,-1.5)--(1.5,-.5);

\draw[thick] (0,-3) ellipse (1 and .5);
\draw[densely dotted] (-.4,-3.45)--(-.4,-2.55);
\draw[densely dotted] (.4,-3.45)--(.4,-2.55);

\draw[thick] (-1,-5) circle (.5);
\draw[densely dotted, ->] (-.55,-4.75)--(.55,-4.75);
\draw[densely dotted, ->] (-.55,-5.25)--(.55,-5.25);
\draw[thick] (1,-5) circle (.5);
\end{scope}

\begin{scope}[shift={(7,0)}]
\draw (0,0) node {Type C};

\draw[thick] (-2.25,-1) circle (.5);
\draw[densely dotted] (-1.25,-1)--(-1.75,-1);
\draw[thick] (-.75,-1) circle (.5);
\draw[thick] (.75,-1) circle (.5);
\draw[densely dotted] (1.25,-1)--(1.75,-1);
\draw[thick] (2.25,-1) circle (.5);

\draw[thick] (-1.5,-3) circle (.5);
\draw[densely dotted] (-1,-3)--(-.5,-3);
\draw[thick] (0,-3) circle (.5);
\draw[densely dotted] (1,-3)--(.5,-3);
\draw[thick] (1.5,-3) circle (.5);

\draw[thick] (-1,-5) circle (.5);
\draw[densely dotted, ->] (-.55,-4.75)--(.55,-4.75);
\draw[densely dotted, <-] (-.55,-5.25)--(.55,-5.25);
\draw[thick] (1,-5) circle (.5);

\draw[thick] (-1.5,-7) circle (.5);
\draw[densely dotted] (-1.5,-7.5)--(-1.5,-6.5);
\draw[thick] (0,-7) circle (.5);
\draw[densely dotted] (1,-7)--(.5,-7);
\draw[thick] (1.5,-7) circle (.5);

\draw[thick] (-1.25,-9) circle (.5);
\draw[densely dotted] (-.75,-9)--(.75,-9);
\draw[thick] (1.25,-9) circle (.5);
\draw[densely dotted] (1.25,-9.5)--(1.25,-8.5);
\end{scope}

\begin{scope}[shift={(-1.5,-8)}]
\draw node {Type X};
\draw[thick] (0,-1) circle (.5);
\draw[densely dotted,->] (0,-1.5)--(0,-.5);
\draw[densely dotted,->] (.5,-1) to[out=0,in=0] (.5,-.3)--(-.5,-.3) to[out=180,in=180] (-.5,-1);
\end{scope}

\begin{scope}[shift={(1.5,-8)}]
\draw node {Type Y};
\draw[thick] (0,-1) circle (.5);
\draw[densely dotted,->] (0,-1.5)--(0,-.5);
\draw[densely dotted,<-] (.5,-1) to[out=0,in=0] (.5,-.3)--(-.5,-.3) to[out=180,in=180] (-.5,-1);
\end{scope}
\end{tikzpicture}
\end{center}
\caption[Face types in the resolution cube.]{\figbld{Face types in the resolution cube.} There are four types of faces in the resolution cube depending on the one-handles corresponding to the face's edges. The thicker solid lines represent the relevant circles in the resolution before the cobordisms, and the dotted lines (or arrows) correspond to the one-handles.}\label{fig-sqtype}
\end{figure}

Our construction of the odd Khovanov homology of a link $L$ with diagram $D$ starts with a skew cube $C(D)\in\operatorname{SCube}_{\mathcal X}(\operatorname{Mod}^{gr}_{R})$ where $\operatorname{Mod}^{gr}_{R}$ is the category of graded $R$-modules. 
Each vertex module is the exterior algebra on the free $R$-module generated by $\{v_1,\dots,v_k\}$
\[\exal{v_1,\cdots,v_k}\]
where each generator $v_i$ corresponds with a circle $a_i$ in the corresponding vertex of the resolution cube $R(D)$. 
The construction of the skew cube is inductive on the number of crossings $n$. 
At each stage, for a diagram $D$ with $n$ crossings, we construct a pair of a skew cube $C(D)$ and a function 
	\[\sgn{\mathcal E(C(D))}:\mathcal E(C(D))\too\{\pm1\}\]
with the following properties:
\begin{enumerate}
\item \label{itm-skew} For a face $C(D)$ of type A or X (resp. C or Y) there are an odd (resp. even) number of edges around the face labeled $-1$ by $\sgn{\mathcal E(C(D))}$.
\item \label{itm-signed} The maps in the skew cube $C(D)$ are obtained by multiplying the maps from Formulas~\ref{eq-merge} and \ref{eq-split}, below by $\sgn{\mathcal E(C(D))}(e)$. 
\end{enumerate}

For the base case $|\mathcal E(C(D))|=1$ we define 
	\[\sgn{\mathcal E(C(D))}(e)=1.\]
If the edge $e$ corresponds to the merge cobordism, the corresponding map is defined by
\begin{equation}
\label{eq-merge}
v_0,v_1\mapsto v_0\sim v_1
\end{equation} 
(see Figure~\ref{fig-cob}(a)).
If $e$ corresponds to the split cobordism, the map is defined by
\begin{equation}
\label{eq-split}
1\mapsto (v_0-v_1)
\end{equation} 
(see Figure~\ref{fig-cob}(b)) where the arrow in $\alpha_0$ (as given in Figure~\ref{fig-signassignment}) points from $a_0$ to $a_1$. 

In the inductive step, consider a diagram with $n+1$ crossings. Let $\hat x$ be one of the crossings, and $D_0$ and $D_1$ the diagrams obtained from the 0- and 1-smoothings at $\hat x$. 
By induction, we have a skew cub $C(D_0)$ and a function
	\[\sgn{\mathcal E(C(D_0))}:\mathcal E(C(D_0))\too\{\pm1\},\]
satisfying Properties~\ref{itm-skew} and \ref{itm-signed}. 
Let $\hat C(D_1)$ be the (not necessarily skew) $\mathcal X\backslash\{\hat x\}$-cube where the maps on the edges are defined by (\ref{eq-merge}) if the edge corresponds to a merge cobordism, and (\ref{eq-split}) if the edge corresponds to a split cobordism.
Note, for a face $S\in\mathcal S(\hat C(D_1))$, it is type A (according to Figure~\ref{fig-signassignment}) if the face anti-commutes, type C if it commutes, and type X or Y if it both commutes and anti-commutes (i.e., the composition of two consecutive edges on the face is the zero map).
So $\sgn{\mathcal E}$ is motivated by the need for a sign assignment that guarantees that each face $S\in\mathcal S(C(D_1))$ is skew.
We define the $\mathcal X\backslash\{\hat x\}$-cube $C(D_1)$ to have the same vertices as $\hat C(D_1)$.
For each edge $e_1\in\mathcal E(C(D_1))$, there is a corresponding edge $\hat e_1\in\mathcal E(\hat C(D_1))$ and a corresponding edge $e_0\in\mathcal E(C(D_0))$. 
The pair $e_0$ and $e_1$ correspond to edges in $R(D)$, and there, they specify a unique face $S\in\mathcal S(R(D))$.
We define
	\[e_1=\sgn{\mathcal E(C(D_0))}(e_0)\sgn{\mathcal S}(S)\hat e_1.\]
	
\begin{lem}
Defined as above, $C(D_1)$ is a skew cube.
\end{lem}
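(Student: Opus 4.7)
The plan is to verify Property~\ref{itm-skew} for every face of $C(D_1)$; Property~\ref{itm-signed} holds by the construction of $\hat C(D_1)$, and skew-ness of a face is equivalent to Property~\ref{itm-skew} for that face (via the parity count relating the unsigned compositions of type A, C, X, Y to $+1$ or $-1$).

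Fix a face $S_1$ of $C(D_1)$, determined by a pair of indices $i_1, i_2 \in \mathcal{X}\setminus\{\hat x\}$ together with a choice of smoothings at the remaining crossings. The same data determines a corresponding face $S_0$ of $C(D_0)$, and $S_0, S_1$ are opposite faces of a $3$-cube $Q$ in the resolution cube $R(D)$ spanned by $\hat x, i_1, i_2$. The four remaining faces $T_1, \ldots, T_4$ of $Q$ each contain a matched pair $(e_0, e_1)$ with $e_0 \in \partial S_0$ and $e_1 \in \partial S_1$ the edges used in the inductive step of the construction.

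Applying the recursive definition $e_1 = \sgn{\mathcal{E}(C(D_0))}(e_0)\,\sgn{\mathcal{S}}(T)\,\hat e_1$ to each of the four edges of $S_1$ gives
\[
\prod_{e_1 \in \partial S_1}\sgn{\mathcal{E}(C(D_1))}(e_1) \;=\; \Bigl(\prod_{e_0 \in \partial S_0}\sgn{\mathcal{E}(C(D_0))}(e_0)\Bigr)\cdot \prod_{a=1}^{4}\sgn{\mathcal{S}}(T_a).
\]
By the inductive hypothesis applied to $S_0$, the parenthesized factor equals $\sgn{\mathcal{S}}(S_0)$. Hence Property~\ref{itm-skew} for $S_1$ is equivalent to the cocycle identity
\[
\sgn{\mathcal{S}}(S_0)\cdot\sgn{\mathcal{S}}(S_1)\cdot\prod_{a=1}^{4}\sgn{\mathcal{S}}(T_a)=+1,
\]
that is, an even number of the six faces of the $3$-cube $Q$ are of type A or X.

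The main obstacle is this last cocycle identity on $3$-cubes in $R(D)$. I would prove it by classifying $3$-cubes according to the local pattern in which the handles attached at $\hat x, i_1, i_2$ merge and split the resolution circles at the various vertices of $Q$. Using Figure~\ref{fig-sqtype}, one reads off the face types in each of the finitely many combinatorial possibilities (distinguished by which circles are shared between the three handles and by how the connectivity changes as one varies $\alpha(\hat x)$), and the required parity check is then routine in each case. Equivalently, this identity is precisely the obstruction to the existence of a sign assignment on $R(D)$, so one may also invoke it from Ozsv\'ath, Rasmussen, and Szab\'o's original construction in~\cite[Section~1]{ORSz}.
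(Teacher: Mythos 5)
Your proposal is correct and follows essentially the same route as the paper: both compare $S_1$ with the corresponding face $S_0$ of $C(D_0)$ across the $3$-cube in $R(D)$, reduce the skewness of $S_1$ to the statement that an even number of that cube's six faces are of type A or X, and obtain that parity statement from Ozsv\'ath--Rasmussen--Szab\'o (the paper cites their Lemma~2.1 directly rather than redoing the case analysis). The only difference is presentational: you package the comparison as a single multiplicative identity over the four edges of $S_1$, whereas the paper unrolls the same count into cases according to whether $S_0$ and $S_1$ have matching types.
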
	

\begin{proof}
Let $S_1\in\mathcal S(\hat C(D_1))$.
There is a corresponding face $S_0\in C(D_0)$, and together, these faces specify a 3-cube in $R(D)$ with $S_0$ as the top face, $S_1$ as the bottom face, and an additional four lateral faces. 
By \cite[Lemma 2.1]{ORSz}, this cube has an even number of faces of type A and X.
We have two cases.
In the first case, $S_0$ and $S_1$ are either both type A or X, or both type C or Y. 
Thus, by \cite[Lemma 2.1]{ORSz}, there are an even number of lateral faces of type A or X. 
Therefore, by the sign assignment of $S_1$, there are an even number of negative signs introduced on the edges of $S_1$.
It follows that the parity of the number of edges that map to $-1$ in $S_0$ matches the parity for $S_1$. 
Since $S_0$ is skew, and $S_1$ is of a matching type, then $S_1$ is skew too.

In the second case, $S_0$ and $S_1$ are not of a matching type. 
That is, one is of type A or X, and the other is of type C or Y. In this case, there is an odd number of lateral faces of type A or X. 
If $S_0$ is type C or Y (and thus $S_1$ is type A or X), by the sign assignment of $S_1$, there are an even number of negative signs introduced on the edges of $S_1$. 
If $S_0$ is type A or X (and thus $S_1$ is type C or Y), by the sign assignment of $S_1$, there are an odd number of negative signs introduced on the edges of $S_1$. 
Therefore, we have the number of negative signs introduced on $S_1$ to guarantee it is skew.
\end{proof}

We define $f^{\hat x}_*:C(D_0)\too C(D_1)$ where for $\alpha\in\{0,1\}^{\mathcal X\backslash\{\hat x\}}$,
$f^{\hat x}_\alpha$ is defined by (\ref{eq-merge}) or (\ref{eq-split}) if the corresponding edge in $R(D)$ is a merge or a split respectively. 
	
\begin{lem}
Defined as above, $f^{\hat x}_*$ is a cube map.
\end{lem}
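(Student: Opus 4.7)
The plan is to verify the cube map identity $f^{\hat x}_{\alpha_1}\circ e[C(D_0)] = e[C(D_1)]\circ f^{\hat x}_{\alpha_0}$ on every face of $R(D)$ containing the $\hat x$-direction, by case analysis on the type (A, C, X, or Y) of the face as classified in Figure~\ref{fig-sqtype}.

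First I would fix an edge $e$ of the index cube $\{0,1\}^{\mathcal X\setminus\{\hat x\}}$ from $\alpha_0$ to $\alpha_1$. Together with $\hat x$, this edge determines a unique face $S\in\mathcal S(R(D))$. Let $\hat e_0,\hat e_1$ denote the unsigned merge/split maps associated with $e$ in $C(D_0)$ and in $\hat C(D_1)$ respectively. By the inductive construction, $e[C(D_0)] = \sgn{\mathcal E(C(D_0))}(e)\,\hat e_0$, while $e[C(D_1)] = \sgn{\mathcal E(C(D_0))}(e)\,\sgn{\mathcal S}(S)\,\hat e_1$; and by definition $f^{\hat x}_{\alpha_0}$ and $f^{\hat x}_{\alpha_1}$ are themselves the raw cobordism maps (with no additional sign). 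Cancelling the common scalar $\sgn{\mathcal E(C(D_0))}(e)$, the cube map identity reduces to
\[f^{\hat x}_{\alpha_1}\circ\hat e_0 \;=\; \sgn{\mathcal S}(S)\;\hat e_1\circ f^{\hat x}_{\alpha_0},\]
a statement entirely about the unsigned cobordism maps around the face $S$.

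The second step is the case analysis. For a face of type C, the raw cobordism maps commute and $\sgn{\mathcal S}(S) = +1$, so the identity holds. For a face of type A, the raw maps anti-commute and $\sgn{\mathcal S}(S) = -1$, so the identity again holds. For faces of type X or Y, one verifies directly from (\ref{eq-merge}) and (\ref{eq-split}), applied to the explicit circle and one-handle configurations shown in Figure~\ref{fig-sqtype}, that both raw compositions $f^{\hat x}_{\alpha_1}\circ\hat e_0$ and $\hat e_1\circ f^{\hat x}_{\alpha_0}$ vanish identically, so the identity holds regardless of $\sgn{\mathcal S}(S)$.

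The main obstacle --- though essentially routine --- is the verification in the degenerate types X and Y: one must check that the specific arrangement of arrows there forces the intermediate wedge product to collapse, for instance because a merge identifies two generators whose difference has just been wedged in by the preceding split, or because a subsequent split reintroduces a repeated factor. The A and C cases are tautological once $\sgn{\mathcal S}$ is set up as in Figure~\ref{fig-sqtype}, and the initial reduction is just bookkeeping with the inductive sign function $\sgn{\mathcal E(C(D_0))}$. Assembling these verifications across all edges $e$ yields the cube map condition for $f^{\hat x}_*$.
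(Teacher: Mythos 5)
Your proposal is correct and follows essentially the same route as the paper: reduce the cube-map identity on each lateral face $S$ to a statement about the unsigned cobordism maps, then observe that the factor $\sgn{\mathcal S}(S)$ exactly compensates for the anti-commutativity of type A faces, is harmless on commuting type C faces, and is irrelevant on type X/Y faces where both compositions vanish. The paper's proof is just a compressed version of this same sign bookkeeping, so no further comparison is needed.
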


\begin{proof}
Let $e_0\in\mathcal E(C(D_0))$ with corresponding $e_1\in\mathcal E(C(D_1))$
	\[e_i:C(D_i)_{\alpha_0}\too C(D_i)_{\alpha_1}\]
for $i=0,1$. 
In $R(D)$, there is a unique face $S$ specified by $e_0$ and $e_1$. 
By construction, $e_1$ has the opposite sign assignment of $e_0$ if $S$ is type A or X, and the same sign assignment if $S$ is type C or Y. 
Thus, the definition of $C(D_1)$ guarantees that
	\[f^{\hat x}_{\alpha_1}\circ e_0 = e_1\circ f^{\hat x}_{\alpha_0}.\qedhere\]
\end{proof}

Hence, we define the skew cube
	\[C(D) = \cone(f^{\hat x}_*).\]
This gives us the next skew cube in the induction step.
It remains to be shown that there is a well-defined sign assignment function at this level.

We define
	\[\sgn{\mathcal E(C(D))}:\mathcal E(C(D))\too \{\pm1\}\] 
as follows.
If $e$ corresponds to an edge $e_0\in C(D_0)$, then
	\[\sgn{\mathcal E(C(D))}(e)=\sgn{\mathcal E(C(D_0))}(e_0).\]
If $e$ corresponds to an edge $e_1\in C(D_1)$, $e_0$ is the corresponding edge in $C(D_0)$ and $S$ is the unique square connecting their correspondents in $R(D)$, then
	\[\sgn{\mathcal E(C(D))}(e) = -\sgn{\mathcal E(C(D_0))}(e_0)\sgn{\mathcal S}(S).\]
Note, the negative sign here corresponds to the negative sign in the mapping cone on edges coming from the target of the cone map. 
On all other edges $e$ (the edges that connect $C(D_0)$ to $C(D_1)$),
	\[\sgn{\mathcal E(C(D))}(e) = 1.\]
	
\begin{lem}
With $\sgn{\mathcal E(C(D))}$ defined as above,
for each face $S\in\mathcal S(C(D))$, $S$ has an even number of edges that map to $-1$ if it is type A or X, and an odd number if it is type C or Y. 
\end{lem}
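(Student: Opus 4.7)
The plan is to induct on the number of crossings, leveraging the mapping-cone construction $C(D) = \cone(f^{\hat{x}}_*)$ together with the inductive hypothesis on $C(D_0)$ and the preceding lemma that $C(D_1)$ is skew. A face $S$ of $C(D)$ is determined by two of the crossings in $\mathcal{X}$, so there are three cases: $S$ lies entirely in the sub-cube $C(D_0) \subset C(D)$; $S$ lies entirely in the sub-cube $C(D_1) \subset C(D)$; or one of the two crossings varying across $S$ is $\hat{x}$, in which case $S$ spans the cone map.

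For $S$ in $C(D_0)$, the restriction of $\sgn{\mathcal{E}(C(D))}$ to edges of $C(D_0)$ agrees with $\sgn{\mathcal{E}(C(D_0))}$ by construction, and the face-type of $S$ in $R(D)$ matches its type as a face of $R(D_0)$, since the type depends only on the local smoothings at the two varying crossings and is unaffected by $\hat{x}$. The inductive hypothesis then delivers the required parity. For $S$ in $C(D_1)$, the cone convention multiplies every $C(D_1)$-side edge sign by $-1$ relative to the sign function used to make $C(D_1)$ skew. Flipping all four edge signs preserves the parity of the $-1$-count, so the conclusion follows from the already-established skewness of $C(D_1)$.

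The heart of the argument is the third case. Here $S$ has four edges: two cone edges running between $C(D_0)$ and $C(D_1)$, each assigned sign $+1$ by construction; one edge $e_0 \in \mathcal{E}(C(D_0))$ with sign $\epsilon_0 := \sgn{\mathcal{E}(C(D_0))}(e_0)$; and one edge $e_1 \in \mathcal{E}(C(D_1))$ with sign $-\epsilon_0\,\sgn{\mathcal{S}}(S)$ by the definition on $C(D_1)$-side edges. Enumerating the four possibilities for $(\epsilon_0, \sgn{\mathcal{S}}(S)) \in \{\pm 1\}^2$ yields $-1$-edge counts of $1$, $0$, $1$, and $2$ respectively, so the parity is odd exactly when $\sgn{\mathcal{S}}(S) = +1$ (type C or Y) and even exactly when $\sgn{\mathcal{S}}(S) = -1$ (type A or X), as claimed.

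The main obstacle is the sign bookkeeping in the third case: one must verify that the $-1$ built into the cone-side edge sign, combined with the $+1$ assigned to the cone edges themselves, cancels the dependence on $\epsilon_0$ and leaves a parity determined solely by $\sgn{\mathcal{S}}(S)$. This cancellation is precisely the purpose of the factor $-\sgn{\mathcal{S}}(S)$ placed in the definition of $\sgn{\mathcal{E}(C(D))}$ on $C(D_1)$-side edges, and it is what makes the inductive step close up correctly.
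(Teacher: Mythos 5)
Your proof is correct and takes essentially the same route as the paper's: split the faces of $C(D)$ into those lying in $C(D_0)$, those lying in $C(D_1)$, and the lateral faces spanning the cone direction, handle the first two by the inductive hypothesis and the construction of $C(D_1)$, and resolve the lateral faces by the sign identity $\sgn{\mathcal E(C(D))}(e_1)=-\sgn{\mathcal E(C(D_0))}(e_0)\sgn{\mathcal S}(S)$ together with the fact that the two cone edges carry sign $+1$. Your handling of the $C(D_1)$ faces is in fact slightly more explicit than the paper's, since you note that the global $-1$ from the mapping cone flips all four edge signs of such a face and therefore preserves the parity of the $-1$ count.
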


\begin{proof}
Since $C(D)$ is the mapping cone of $f^{\hat x}_*:C(D_0)\too C(D_1)$, where the faces of $C(D_0)$ are already assumed to satisfy this property, and the faces of $C(D_1)$ are constructed to do so, it remains only to look at the faces $S$ which connect an edge $e_0$ that came from $C(D_0)$ to the edge $e_1$ that comes an edge in  $C(D_1)$.
We note the other two edges\----obtained from the mapping cone\----each map to $+1$.

In the first case, let $S$ be type A or X. 
In this case
	\begin{align*}
	\sgn{\mathcal E(C(D_1))}(e_1) &= -\sgn{\mathcal E(C(D_0))}(e_0)\sgn{\mathcal S}(S)\\
	&= \sgn{\mathcal E(C(D_0))}(e_0).
	\end{align*}
Thus, there are exactly zero or two edges that map to $-1$ in $S$.

In the second case, let $S$ be type C or Y.
Then,
	\begin{align*}
	\sgn{\mathcal E(C(D_1))}(e_1) &= -\sgn{\mathcal E(C(D_0))}(e_0)\sgn{\mathcal S}(S)\\
	&= -\sgn{\mathcal E(C(D_0))}(e_0).
	\end{align*}
Thus, there is exactly one edge that maps to $-1$ in $S$. 
\end{proof}

\subsection{The Computer Program}
\label{ssec-prog}
To investigate the invariant, the author has written a suite of modules in Python. 
First, there is the module \texttt{braid.py}, with a \texttt{Braid} class that represents braids by their braid word as a list of signed integers where the absolute value of each integer represents the left strand of the crossing, and the sign of the integer is the sign of the crossing. 
For example, the knot $9_{32}$ is the closure of the braid with braid word
\[\sigma_3^2\sigma_2^{-1}\sigma_3\sigma_2^{-1}\sigma_1\sigma_3\sigma_2^{-1}\sigma_1.\]
In the \texttt{Braid} class, this is represented as the list \texttt{[3,3,-2,3,-2,1,3,-2,1]}. 
Below, we have an interaction running \texttt{python3} on the command line from the folder containing the modules.
\begin{lstlisting}[language=Bash]
>>> import braid
>>> chiral_knot = braid.Braid([3,3,-2,3,-2,1,3,-2,1])
>>> mirror_chiral_knot = chiral_knot.mirror()
>>> reverse_chiral_knot = chiral_knot.reverse()
>>> chiral_knot.self_linking_number()
0
\end{lstlisting}
The class also uses the author's \texttt{braidresolution.sty} package to produce braid diagrams for \LaTeX{} documents. 
Continuing from above, the command below produces \TeX{} for the diagram in Figure~\ref{fig-braidtikz}.
\begin{lstlisting}[language=Bash]
>>> chiral_knot.tex_braid()
\end{lstlisting}
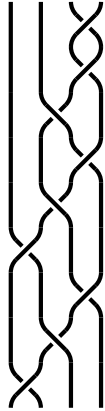
\begin{figure}
\begin{center}\begin{tikzpicture}[scale = 0.4]
	\braidline{4}{3}{0}{0}{1}
	\braidline{4}{3}{0}{1}{1}
	\braidline{4}{2}{0}{2}{0}
	\braidline{4}{3}{0}{3}{1}
	\braidline{4}{2}{0}{4}{0}
	\braidline{4}{1}{0}{5}{1}
	\braidline{4}{3}{0}{6}{1}
	\braidline{4}{2}{0}{7}{0}
	\braidline{4}{1}{0}{8}{1}
\end{tikzpicture}\end{center}
\caption[The diagram of \texttt{chiral{\_}knot} produced by the \texttt{Braid} class.]{\figbld{The diagram of} \texttt{chiral{\_}knot} \figbld{produced by the} \texttt{Braid} \figbld{class.}}
\label{fig-braidtikz}
\end{figure}

The class also has the addition operator overloaded to compute the connect sum of the closure of two braids. 
\begin{lstlisting}[language=Bash]
>>> right_trefoil = braid.Braid([1,1,1])
>>> left_trefoil = right_trefoil.mirror()
>>> left_trefoil.get_word()
[-1,-1,-1]
>>> connect_sum = right_trefoil + left_trefoil
>>> connect_sum.get_word()
[1,1,1,-2,-2,-2]
\end{lstlisting}

The \texttt{grid.py} module has a \texttt{Grid} class that represents grid diagrams as a pair of lists of integers. 
The first list gives the column index of the X positions row by row, and the second gives the O positions indexed from 0. 
By default, the class creates the braid object for the right-heading braid, but it can do so for any direction in the diagram.
\begin{lstlisting}[language=Bash]
>>> import grid
>>> trefoil_grid = grid.Grid([3,1,0,4,5,2],[0,5,2,1,3,4])
>>> trefoil_braid = trefoil_grid.braid()
>>> trefoil_braid.get_word()
[-1, -2, 1, 2, 2, -1]
>>> trefoil_leftward_braid = trefoil_grid.braid('left')
>>> trefoil_leftward_braid.get_word()
[1, -2, 1, -2]
\end{lstlisting}

The \texttt{Grid} class also has methods to produce \TeX{} for the grid diagram, (e.g., \texttt{trefoil{\_}braid.tex{\_}grid()}, see Figure~\ref{fig-gridtikz}(a)), the corresponding knot diagram (e.g., \texttt{trefoil{\_}braid.tex{\_}knot()}, see Figure~\ref{fig-gridtikz}(b)), and the Legendrian front of the grid (e.g., \texttt{trefoil{\_}braid.tex{\_}Legendrian{\_}front()}, see Figure~\ref{fig-gridtikz}(c)).
\begin{figure}
\begin{center}
\begin{tikzpicture}[scale=0.75]
\begin{scope}[shift = {(-0.5,0.5)}]
\draw (0,0) grid (6,6);
\end{scope}
\draw [white] (2.5,8) node{.};
\draw (0,6) circle (.3cm);
\draw (3,6) ++(-.3,-.3) -- ++ (.6,.6) ++(-.6,0) --++(.6,-.6);
\draw (5,5) circle (.3cm);
\draw (1,5) ++(-.3,-.3) -- ++ (.6,.6) ++(-.6,0) --++(.6,-.6);
\draw (2,4) circle (.3cm);
\draw (0,4) ++(-.3,-.3) -- ++ (.6,.6) ++(-.6,0) --++(.6,-.6);
\draw (1,3) circle (.3cm);
\draw (4,3) ++(-.3,-.3) -- ++ (.6,.6) ++(-.6,0) --++(.6,-.6);
\draw (3,2) circle (.3cm);
\draw (5,2) ++(-.3,-.3) -- ++ (.6,.6) ++(-.6,0) --++(.6,-.6);
\draw (4,1) circle (.3cm);
\draw (2,1) ++(-.3,-.3) -- ++ (.6,.6) ++(-.6,0) --++(.6,-.6);
\draw (2.5,-1) node {(a) the grid diagram.};
\end{tikzpicture}
\qquad
\begin{tikzpicture}[scale=0.75]
\draw [white] (2.5,8) node{.};
\draw (0,6) circle (.3cm);
\draw (3,6) ++(-.3,-.3) -- ++ (.6,.6) ++(-.6,0) --++(.6,-.6);
\draw[->] (0,6) ++ (1*0.5,0) --++(2,0);
\draw (5,5) circle (.3cm);
\draw (1,5) ++(-.3,-.3) -- ++ (.6,.6) ++(-.6,0) --++(.6,-.6);
\draw[->] (5,5) ++ (-1*0.5,0) --++(-3,0);
\draw (2,4) circle (.3cm);
\draw (0,4) ++(-.3,-.3) -- ++ (.6,.6) ++(-.6,0) --++(.6,-.6);
\draw[->] (2,4) ++ (-1*0.5,0) --++(-1,0);
\draw (1,3) circle (.3cm);
\draw (4,3) ++(-.3,-.3) -- ++ (.6,.6) ++(-.6,0) --++(.6,-.6);
\draw[->] (1,3) ++ (1*0.5,0) --++(2,0);
\draw (3,2) circle (.3cm);
\draw (5,2) ++(-.3,-.3) -- ++ (.6,.6) ++(-.6,0) --++(.6,-.6);
\draw[->] (3,2) ++ (1*0.5,0) --++(1,0);
\draw (4,1) circle (.3cm);
\draw (2,1) ++(-.3,-.3) -- ++ (.6,.6) ++(-.6,0) --++(.6,-.6);
\draw[->] (4,1) ++ (-1*0.5,0) --++(-1,0);
\draw[white, line width = 5pt] (0,4) ++ (0,1*0.5) --++ (0,1);
\draw[->] (0,4) ++ (0,1*0.5) --++(0,1);
\draw[white, line width = 5pt] (1,5) ++ (0,-1*0.5) --++ (0,-1);
\draw[->] (1,5) ++ (0,-1*0.5) --++(0,-1);
\draw[white, line width = 5pt] (2,1) ++ (0,1*0.5) --++ (0,2);
\draw[->] (2,1) ++ (0,1*0.5) --++(0,2);
\draw[white, line width = 5pt] (3,6) ++ (0,-1*0.5) --++ (0,-3);
\draw[->] (3,6) ++ (0,-1*0.5) --++(0,-3);
\draw[white, line width = 5pt] (4,3) ++ (0,-1*0.5) --++ (0,-1);
\draw[->] (4,3) ++ (0,-1*0.5) --++(0,-1);
\draw[white, line width = 5pt] (5,2) ++ (0,1*0.5) --++ (0,2);
\draw[->] (5,2) ++ (0,1*0.5) --++(0,2);
\draw (2.5,-1) node {(b) the knot diagram.};
\end{tikzpicture}
\qquad
\begin{tikzpicture}[scale=0.75]
\draw [white] (2.5,6) node{.};
\draw (0,0) to[out=0, in=180] (3,3);
\draw (2,0) to[out=0, in=180] (6,4);
\draw (2,-2) to[out=0, in=180] (4,0);
\draw (4,-2) to[out=0, in=180] (7,1);
\draw (7,-1) to[out=0, in=180] (9,1);
\draw (7,-3) to[out=0, in=180] (9,-1);
\draw[white, line width = 5pt] (0,0) to[out=0, in=180] (2,-2);
\draw (0,0) to[out=0, in=180] (2,-2);
\draw[white, line width = 5pt] (2,0) to[out=0, in=180] (4,-2);
\draw (2,0) to[out=0, in=180] (4,-2);
\draw[white, line width = 5pt] (4,0) to[out=0, in=180] (7,-3);
\draw (4,0) to[out=0, in=180] (7,-3);
\draw[white, line width = 5pt] (3,3) to[out=0, in=180] (7,-1);
\draw (3,3) to[out=0, in=180] (7,-1);
\draw[white, line width = 5pt] (7,1) to[out=0, in=180] (9,-1);
\draw (7,1) to[out=0, in=180] (9,-1);
\draw[white, line width = 5pt] (6,4) to[out=0, in=180] (9,1);
\draw (6,4) to[out=0, in=180] (9,1);
\draw (4.5,-4) node {(c) the Legendrian front.};
\end{tikzpicture}
\end{center}
\caption[The diagrams of \texttt{chiral{\_}knot} produced by the \texttt{Grid} class.]{\figbld{The diagrams of} \texttt{chiral{\_}knot} \figbld{produced by the} \texttt{Grid} \figbld{class.}}
\label{fig-gridtikz}
\end{figure}

Corresponding to a diagram with $n$ indexed crossings, there is a resolution cube with $2^n$ vertices. 
In the program, these vertices are represented as an integer whose binary representation has $i^{\text{th}}$ digit $b_i$. 
Thus, a vertex then has the resolution with the $b_i$-smoothing at the $i^{\text{th}}$ crossing for all $i$, and data corresponding to the vertices is stored in lists whose indices correspond directly to vertices. 
For the additional information in the resolution cube, we have the \texttt{cube.py} module, which contains three classes:  the \texttt{EdgeStruct} class, the \texttt{SquareStruct} class, and the \texttt{Vertical} class. 

The \texttt{EdgeStuct} class is a container for the edges of the resolution cube. 
In memory, edges are stored as a pair of integers: the vertex of the start of the edge as described above, and the crossing index (indexed from 0), which changes from $0$ to $1$. 
This container is used to store both the maps between vertices and the sign assignment, which makes the faces anticommutative. 
An \texttt{EdgeStruct} object also provides an iterator that iterates through the edges primarily in order of the vertices and secondarily in order of the crossing index.

The \texttt{SquareStruct} class is a container for the faces (or squares) of the resolution cube. 
In memory, squares are stored as a triple of integers: the leftmost vertex in the square, and the two integers representing the two edges in the square that adjoin the leftmost vertex. 
In the program, there is one \texttt{SquareStruct} object that is used as an iterator, but it also acts as a container that stores information about the commutativity type of the square. 
The order in which it iterates through the squares is fundamentally different from the \texttt{EdgeStuct} class. 
Since any $(n+1)$-dimensional cube can be represented as two $n$-dimensional subcubes (one shifted over to the right one spot) connected by $2^n$ additional edges, the edges' signs are computed inductively from squares on the subcubes. 
The iterator provided by an \texttt{EdgeStruct} objector encodes this inductive order.

The \texttt{Vertical} class provides an iterator through all of the generators of the vertex modules that sum to each module in the chain complex. 
This class is used to produce the matrices from which the homology and the invariant are computed. 

There is also the \texttt{khovanovhomology.py} module, which contains the container class: \texttt{KhovanovHomology}. 
As a container, it stores the homological information. Furthermore, it contains a collection of the methods necessary to compute it. 
The methods include those which can compute the even and odd Khovanov homologies over $\Z$ and any field, as well as Plamenevskaya's invariant and its odd analog defined in Section~\ref{sec-inv}. 
As even and odd Khovanov homology are categorifications of the Jones polynomial, in the process of computing the resolutions, the \texttt{KhovanovHomology} class can calculate the Jones polynomial. 

To support these, there are two more modules: \texttt{fields.py} and \texttt{algebra.py}. 
The \texttt{fields.py} module contains a class \text{FE} that handles elements in $\Q$ or $\Z/p$. 
The benefit of having a single class for field elements of all characteristics is that only a single number needs to be changed to compute the even and odd Khovanov homologies and invariants over different fields. 
The \texttt{algebra.py} module contains a variety of basic supporting functions as well as implementations of different algorithms necessary for computational homology. 
For an integer matrix $A$, there is a function that computes its Smith normal form: a matrix $D$ that is zero except on its diagonal and $D_{i,i}$ divides $D_{i+1,i+1}$, as well as the unimodular matrices $S$ and $T$ such that 
	\[SAT = D.\]
These output can be used for the inputs of another function that finds the smallest positive $n$ such that 
	\[Ax=ny\]
if such an $n$ exists. 
The former function, along with another function that handles row reduction over a field, provide the computations necessary to compute the homology. 
The latter is used to compute if the invariant is zero and if it is torsion. 

In the next example, we show how to generate the odd Khovanov homology for a knot for its grid diagram. 
Below, we compute the odd Khovanov homology for $8_{19}$.
\begin{lstlisting}[language=Bash]
>>> import grid
>>> G = grid.Grid([0,1,6,2,5,7,8,3,4,9],[6,7,8,9,1,4,5,0,2,3])
>>> G.comp_full_graded_homology()
>>> B = G.braid()
>>> B.comp_full_graded_homology()
KH'_( 0)(L) = Z^1[ 7] + Z^1[ 5]
KH'_( 1)(L) = 0
KH'_( 2)(L) = Z^1[11] + Z^1[ 9]
KH'_( 3)(L) = 0
KH'_( 4)(L) = Z/2[13] + Z/2[11]
KH'_( 5)(L) = Z^1[17] + (Z^1 + Z/3)[15] + Z/3[13]
Wide knot, sigma = 6, sl = 5.
\end{lstlisting}

While computing the invariant, the code checks if the invariant is zero in homology, and if not, if it is torsion.
Below, we have the computation that shows that the invariant does not distinguish the pair of knots in \cite[$m10_{140}$]{BM2}
\begin{lstlisting}[language=Bash]
>>> import grid
>>> L1 = grid.Grid([8,7,1,3,5,4,2,6,0],[3,2,4,6,8,7,0,1,5])
>>> B1 = L1.braid()
>>> B1.comp_inv()
Inv NonZero
\end{lstlisting}

\begin{lstlisting}[language=Bash]
>>> L2 = grid.Grid([8,7,0,3,5,4,6,1,2],[3,1,4,6,8,7,2,5,0])
>>> B2 = L2.braid()
>>> B2.comp_inv()
Inv NonZero
\end{lstlisting}

\subsection{Computational Observations}
\label{ssec-compob}
If $\sigma(K)$ is the signature of a knot, then in the knots that have been examined so far, the knots in which the invariant is nonzero correspond exactly with knots in which 
	\[sl(K) =\sigma(K)-1.\]
As seen before, if $K$ is alternating, then
	\[sl(K)\leq\sigma(K)-1,\]
thus the invariant is nonzero in the cases where this maximum is reached. 
This is supported by Proposition~\ref{prop-negstab}, which implies of $\psi(K)\neq 0$ then $K$ is not the negative stabilization of another knot. 
If it were, there would be a knot $K'$ which had the same topological knot type as $K$, but $sl(K') = sl(K) + 2$. 
	
The even and odd Plamenevskaya invariants are zero and nonzero in the same knots for knots examined.
 
 If $n$ is the length of the braid used in the computation (the number of crossings), and $n_-$ is the number of negative crossings, the invariant is usually zero if $n_-/n > 0.25$, and usually nonzero if $n_-/n < 0.25$. 
 There are a limited number of exceptions, namely the following, which are zero,
\begin{alignat*}
99_{11}:&\quad[3, 3, 3, 3, -2, 1, 3, -2, 1]&&n_-/n = 0.\overline{2},\\
m9_{20}:&\quad[3, 3, 3, -2, 1, 3, -2, 1, 1]&&n_-/n = 0.\overline{2},\\
\intertext{and this one, which is nonzero,}
m9_{35}:&\quad[4, 4, 3, -4, 3, 3, 2, 1, -3, -3, -2, 1, 3, 2]\quad&&n_-/n =0.\overline{285714}.
\end{alignat*}
Note, for all braids computed, at least one of $\psi(B)$ or $\psi(mB)$ is zero. 

So far, the invariant has not been shown to be effective. 
The Plamenevskaya invariant has not been shown to be effective either, and there is also evidence that it might not be.
However, among the reasons why an odd analog of Plamenevskaya's invariant could be effective even if Plamenevskaya's invariant is not is the construction of odd Khovanov homology.
Ozsv\'ath and Szab\'o constructed a spectral sequence whose $E_2$ term is $KH(L;\Z/2)$, which converges to $\widehat{HF}(L)$ \cite{OSz}. 
Attempts to lift the spectral sequence to $\Z$ failed, but inspired the definition of the odd Khovanov homology, where it is conjectured that there is a spectral sequence whose $E_2$ term is $KH'(L;\Z)$ that converges to $\widehat{HF}(\Sigma(mL))$. 
Ng, Ozsv\'ath and Thurston showed the filtered homotopy type of $\widehat{HF}$, called knot Floer homology, could be used to distinguish pairs of transverse knots with the same classical transverse invariants. 
Tracing the Plamenevskaya invariant through to the knot Floer homology is limited by the $\Z/2$ coefficients, however the even analogue is not. 
We can also compare this to a similar spectral sequence from odd Khovanov homology to the framed instanton homology of the branched double cover of a link \cite{Sca}
 and the contact invariant in instanton Floer homology \cite{BS2}.

\newpage
\bibliography{oddkhoho}
\bibliographystyle{alpha}
\end{document}